\title{ A Nonconforming Finite Element Approximation for the von Karman Equations}
\author{Gouranga Mallik\footnote{Department of Mathematics, Indian Institute of Technology Bombay, Powai, Mumbai 400076, India. Email. gouranga@math.iitb.ac.in}
 \; and  Neela Nataraj\footnote{Department of Mathematics, Indian Institute of Technology Bombay, Powai, Mumbai 400076, India. Email. neela@math.iitb.ac.in}}
\chardef\bslash=`\\ % p. 424, TeXbook
\newtheorem{thm}{Theorem}[section]
\newtheorem{lem}[thm]{Lemma}
\theoremstyle{definition}
\newtheorem{defn}{Definition}[section]
\theoremstyle{remark}
\newtheorem{rem}{Remark}[section]
\numberwithin{equation}{section}
\newcommand{\bB}{\mathbb B}
\newcommand{\bR}{\mathbb R}
\newcommand{\fC}{\mathfrak C}
\newcommand{\cA}{\mathcal A}
\newcommand{\cV}{\mathcal V}
\newcommand{\cT}{\mathcal T}
\newcommand{\map}{\longrightarrow}
\newcommand{\lt}{L^2(\Omega)}
\newcommand{\ltsq}{(L^2(\Omega))^2}
\newcommand{\hto}{H^2_0(\Omega)}
\newcommand{\integ}{\int_\Omega}
\newcommand{\sit}{\sum_{T\in\mathcal{T}_h}\int_T}
\newcommand{\sidt}{\sum_{T\in\mathcal{T}_h}\int_{\partial T}}
\newcommand{\dx}{\;\textit{dx}}
\newcommand{\ds}{\;\textit{ds}}
\newcommand{\sumt}{\sum_{T\in\mathcal{T}_h}}
\newcommand{\bphi}{\bar\Phi}
\newcommand{\fl}{\quad\forall\,}
\newcommand{\half}{\frac{1}{2}}
\newcommand{\trinl}{\ensuremath{\left| \! \left| \! \left|}}
\newcommand{\trinr}{\ensuremath{\right| \! \right| \! \right|}}
\newcommand{\Holder}{H\"{o}lder's }
\newcommand{\cof}{{\rm cof}}
\numberwithin{equation}{section}
\date{}
\begin{document}
\maketitle

\begin{abstract} 
 In this paper, a nonconforming finite element method has been proposed and analyzed for the von K\'{a}rm\'{a}n equations that describe bending of thin elastic plates. Optimal order error estimates in broken energy and $H^1$ norms are derived under minimal regularity assumptions.  Numerical results that justify the theoretical results are presented.
\end{abstract}

\hspace*{1cm}{\bf Keywords.} von K\'{a}rm\'{a}n equations, Morley element, plate bending, non-linear
\medskip\\
\hspace*{2cm}{\bf AMS subject classifications. } 35J61, 65N12, 65N30

\section{Introduction}\label{intro}

Let $\Omega \subset\mathbb{R}^2$ be a polygonal domain  with boundary $\partial \Omega$. Consider the von K\'{a}rm\'{a}n equations
   for the deflection of very thin elastic plates that are modeled  by a  non-linear system of fourth-order partial differential equations with two unknown functions defined by:  for given $f\in L^{2}(\Omega)$, seek the vertical displacement $u$ and the Airy stress function $v$ 
  such that
 \begin{equation}\label{vke}
   \left.
    \begin{array}{l l}
      \Delta^2 u &=[u,v]+f \\
      \Delta^2 v &=-\half[u,u]
    \end{array}
   \right\} \text{in } \Omega
  \end{equation}
  with clamped boundary conditions
  \begin{equation}\label{vkb}
   u=\frac{\partial u}{\partial \nu} = v = \frac{\partial v}{\partial \nu} = 0 \text{  on  } \partial\Omega,
  \end{equation}
  where the biharmonic operator $\Delta^2$ and the von K\'{a}rm\'{a}n bracket $[\cdot,\cdot]$ are defined by 
  $$\Delta^2\varphi:=\varphi_{xxxx}+2\varphi_{xxyy}+\varphi_{yyyy} \mbox{ and }
  [\eta,\chi]:=\eta_{xx}\chi_{yy}+\eta_{yy}\chi_{xx}-2\eta_{xy}\chi_{xy}=\cof(D^2\eta):D^2\chi,$$
  $\cof(D^2\eta)$ denotes the co-factor matrix of $D^2\eta$ and $\nu$ denotes the unit outward normal to the boundary $\partial\Omega$ of $\Omega$.
  
\medskip
%In 1910, von K\'{a}rm\'{a}n proposed a nonlinear model for thin elastic plates with large deflection in which membrane and bending effects interact strongly on account of finite lateral deflection. Although in literature there are well known linear plate theory namely Kirchhoff-Love and Reissner-Mindlin, they have their own limitations and reasonable assumptions on the plate. Kirchhoff model is suitable for thin plates with small deflection and uncoupled membrane bending action. Whereas, Reissner-Mindlin model is preferable for moderately thin plates in which first order transverse shear plays a significant role. There are few questions regarding existence, regularity and bifurcation of the solution of von K\'{a}rm\'{a}n equations. But Ciarlet able to provide an effective strategy for imbedding the von K\'{a}rm\'{a}n equations in a rational approximation scheme that overcomes the few questions and objections. A full justification of von K\'{a}rm\'{a}n  equations and its existence, uniqueness and bifurcation is widely analyzed in \cite{CiarletPlates}. 

Depending on the thickness to length ratio, several plate models have been studied in literature; the most important ones being linear models like Kirchhoff and Reissner-Mindlin plates for {\it  thin} and {\it moderately thick} plates respectively; and non-linear von K\'{a}rm\'{a}n plate model  for {\it very thin} plates. Many practical applications deal with the Kirchhoff model for {\it thin} plates in which the transverse shear deformation is negligible. %The von Karman model reduces to Kirchhoff model for small deflections. 
On the other hand, the Reissner-Mindlin plate model for {\it moderately thick} plates takes into consideration the shear deformation. The displacements of {\it very thin} plates are so large that a non-linear model is essential to consider the membrane action. The assumptions made in the von K\'{a}rm\'{a}n model are similar to those of Kirchhoff model except for the linearization of the strain tensor, which in fact, leads to the non-linearity in the model. 
\medskip

For the theoretical study as regards the existence of solutions, regularity and bifurcation phenomena of  von K\'{a}rm\'{a}n equations, see \cite{CiarletPlates, Knightly, Fife, Berger,BergerFife, BlumRannacher} and the references therein. Due to the importance of the problem in application areas, several numerical approaches have also been attempted in the past. The major challenges are posed by the non-linearity and the higher order nature of the equations. The convergence analysis and error bounds for conforming finite element methods are analyzed in \cite{Brezzi}. The papers \cite{Miyoshi, Reinhart} and \cite{Quarteroni} investigate and analyze the Hellan-Hermann-Miyoshi mixed finite element method and a stress-hybrid method, respectively for the von K\'{a}rm\'{a}n equations. In these papers, the authors simultaneously approximate the unknown functions and their derivatives.  The papers \cite{Brezzi, Miyoshi, Quarteroni} deal with the approximation and error bounds for isolated solutions, thereby not discussing the difficulties arising from the non-uniqueness of the solution and the bifurcation phenomena.
\medskip

Over the last few decades, the finite element methodology has developed in various directions. For higher-order problems, nonconforming methods and discontinuous Galerkin methods are gaining popularity as they have a clear advantage over conforming finite elements with respect to simplicity in implementation. 
In this paper, an attempt has been made to study the von K\'{a}rm\'{a}n equations using nonconforming Morley finite elements. The Morley finite element method has been proposed and analyzed for the biharmonic equation  in \cite{MingXu} and for the Monge-Amp\`{e}re equation in \cite{MNeilan}.  In \cite{XLR}, a two level additive Schwarz method for a non-linear biharmonic equation using Morley elements is discussed under the assumption of smallness of data. The $C^0$ interior penalty method, a variant of the discontinuous Galerkin method has been used to analyze the Monge-Amp\`{e}re equation in \cite{Gudi}.
\medskip

The solutions $u,v$ of clamped von K\'{a}rm\'{a}n equations defined on a polygonal domain belong to $H^2_0(\Omega)\cap H^{2+\alpha}(\Omega)$\cite{BlumRannacher}, where $\alpha\in (\half, 1]$ referred to as the index of elliptic regularity is determined by the interior angles of $\Omega$. Note that when $\Omega$ is convex, $\alpha=1$. This paper discusses a nonconforming finite element discretization of \eqref{vke}-\eqref{vkb} and develops $a~priori$ error estimates for the displacement and Airy stress functions in polygonal domains with possible corner singularities. To highlight the contributions of this work, we have 
\begin{itemize}
 \item obtained an approximation of an isolated solution pair $(u,v)$ of \eqref{vke}-\eqref{vkb} using nonconforming Morley elements; 
 \item developed optimal order error estimates in broken energy and $H^1$ norms under realistic regularity assumptions;
 \item performed numerical experiments that justify the theoretical results.
\end{itemize}
 The advantages of the method are that the nonconforming Morley elements which are based on piecewise quadratic polynomials are simpler to use and have lesser number of degrees of freedom in comparison with the conforming Argyris finite elements with 21 degrees of freedom in a triangle or the Bogner-Fox-Schmit finite elements with 16 degrees of freedom in a rectangle. Moreover, the method is easier to implement than mixed/hybrid finite element methods. 

\medskip
The difficulties due to non-conformity of the space increases the technicalities in the proofs of error estimates. Moreover, one loses the symmetry property with respect to all the variables in the discrete formulation for nonconforming case. An important aid in the proofs is  a companion conforming operator, also known in the literature as the enriching operator which maps the elements in the nonconforming finite element space to that of the conforming space. Also, as proved in \cite{HuShi} for the biharmonic problem, it is true that  when Morley finite elements are used for the von K\'{a}rm\'{a}n equations, the $L^2$ error estimates cannot be further improved. This is evident from the results of the numerical experiments presented in Section ~\ref{sec:num}. 

\medskip
The paper is organized as follows.  Section ~\ref{intro} is introductory and Section ~\ref{sec:weakformulation} introduces the weak formulation for the problem. This is followed by description of nonconforming finite element formulation in Section ~\ref{sec:ncfem}. Section ~\ref{sec:ee} deals with the existence of the discrete solution and the error estimates in broken energy and $H^1$ norms. The results of the numerical experiments are presented in Section ~\ref{sec:num}. Conclusions and perspectives are discussed in Section ~\ref{conclusions}. The analysis of a more generalized form of \eqref{vke}-\eqref{vkb} is dealt with  in Appendix A.

\medskip

Throughout the paper, standard notations on Lebesgue and Sobolev spaces and their norms are employed. We denote the standard $L^2$ scalar or vector inner product by $(\cdot,\cdot)$ and the standard norm on $H^{s}(\Omega)$, for $s>0$ by  $\|\cdot\|_{s}$. 
The positive constants $C$ appearing in the inequalities denote generic constants which may depend on the domain $\Omega$ but not on the mesh-size.

\section{Weak formulation}
\label{sec:weakformulation}

The weak formulation corresponding to \eqref{vke}-\eqref{vkb} is: given $f\in\lt$, find $u,v\in \: V:=\hto$ such that
\begin{subequations}\label{wform}
  \begin{align}
   & a(u,\varphi_1)+ b(u,v,\varphi_1)+b(v,u,\varphi_1)=l(\varphi_1)   \fl\varphi_1\in V\label{wforma}\\
   & a(v,\varphi_2)-b(u,u,\varphi_2)   =0            \fl\varphi_2 \in V\label{wformb}
  \end{align}
\end{subequations}
where $\fl\eta,\chi,\varphi\in V$, 
  \begin{align*}
   &a(\eta,\chi):=\integ D^2 \eta:D^2\chi\dx, \; \;  b(\eta,\chi,\varphi):=\half\integ \cof(D^2\eta)D\chi\cdot D\varphi\dx \; \;  {\mbox {and}}      \; \;   
   l(\varphi):=(f,\varphi).
  \end{align*}
Note that $b(\cdot,\cdot,\cdot)$ is derived using the divergence-free rows property~\cite{Evans,MNeilan}. Since the Hessian matrix $D^2\eta$ is symmetric, $\cof(D^2\eta)$ is symmetric. Consequently, $b(\cdot,\cdot,\cdot)$ is symmetric with respect to the  second and third variables, that is,   $b(\eta,\xi,\varphi)=b(\eta,\varphi,\xi)$. Moreover,  since $[\cdot,\cdot]$ is symmetric, $b(\cdot,\cdot,\cdot)$ is symmetric with respect to all the variables in the weak formulation. 

\medskip
An equivalent vector form of the weak formulation which will be also used in the analysis is defined as: for $F=(f,0)$ with $f\in\lt$, seek $\Psi=(u,v)\in \cV:=V\times V$ such that
   \begin{equation}\label{vform}
      A(\Psi,\Phi)+B(\Psi,\Psi,\Phi)=L(\Phi) \fl \Phi \in \cV
   \end{equation}
   where$\fl\, \Xi=(\xi_{1},\xi_{2}),\Theta=(\theta_{1},\theta_{2})$ and $ \Phi=(\varphi_{1},\varphi_{2})\in \mathcal{V}$,
   \begin{align}
    &A(\Theta,\Phi):=a(\theta_1,\varphi_1)+a(\theta_2,\varphi_2),\label{defnA}\\
    &B(\Xi,\Theta,\Phi):=b(\xi_{1},\theta_{2},\varphi_{1})+b(\xi_{2},\theta_{1},\varphi_{1})-b(\xi_{1},\theta_{1},\varphi_{2}) \; \; \mbox{and}\label{defnB} \\
    &L(\Phi):=(f,\varphi_1).\label{defnL}
   \end{align} 

It is easy to verify that the bilinear forms $A(\cdot,\cdot)$ and $B(\cdot,\cdot,\cdot)$ satisfy the following continuity and coercivity properties. That is, there exist constants $C$ such that
\begin{eqnarray}
 {A}(\Theta,\Phi)&\leq& C\trinl\Theta\trinr_2 \: \trinl\Phi\trinr_2  \quad \forall \Theta,\,\Phi\in \mathcal{V},\\
 {A}(\Theta,\Theta)& \geq&  C\trinl\Theta\trinr_2^2 \quad \forall \Theta \in  \mathcal{V}, \\
B(\Xi, \Theta, \Phi) & \leq & C \trinl\Xi\trinr_2 \: \trinl\Theta\trinr_2 \: \trinl\Phi\trinr_2 \quad \forall \Xi, \,\Theta, \, \Phi \in  \mathcal{V},
\end{eqnarray}
where the product norm $\trinl\Phi\trinr_2:=\sqrt{A(\Phi,\Phi)}\fl\Phi\in\cV$. In the sequel, the product norm defined on $(H^s(\Omega))^2$ and $(L^2(\Omega))^2$ are denoted by $\trinl\cdot\trinr_s$ and $\trinl\cdot\trinr$, respectively.

For the results on existence of solution of the weak formulation, we refer to \cite{Berger,BergerBook, Knightly, CiarletPlates}. More precisely, the weak solution $\Psi=(u,v)$ of \eqref{vke}-\eqref{vkb} can be characterized as the solution of the operator equation $I\Psi=T\Psi$ defined on $\cV$ where $T$ is a compact operator on $\cV$ and $I$ is an identity operator on $\cV$. In \cite{Knightly}, it has been proved that there exists at least one solution of the operator equation. Also, the uniqueness of solution under the assumption on smallness of the data function $f$ has been derived.

In this paper, we follow \cite{Brezzi} and assume that the solution $\Psi=(u,v)$ is isolated. That is, the linearized problem defined by:  for given $G=(g_1,g_2)\in( L^2(\Omega) )^2 \subset {\mathcal V}'$, seek $\Theta=(\theta_1,\theta_2)\in \mathcal{V}$ such that
   \begin{equation}\label{vforml}
     \cA(\Theta,\Phi)=(G,\Phi) \fl \Phi \in \mathcal{V}
   \end{equation}
where $\cA(\Theta,\Phi):=A(\Theta,\Phi)+B(\Psi,\Theta,\Phi)+B(\Theta,\Psi,\Phi)$ is well posed and satisfies the $a~priori$ bounds
\begin{equation}\label{apriorilin23}
\trinl\boldsymbol{\Theta}\trinr_2\leq C\trinl G\trinr, \quad \trinl\boldsymbol{\Theta}\trinr_{2+\alpha}\leq C\trinl G\trinr
\end{equation}
where $\alpha$ is the index of elliptic regularity.

\section{Nonconforming Finite Element Method (NCFEM)}   
\label{sec:ncfem}

In the first subsection, the Morley element is defined and some preliminaries are introduced. In the second subsection, nonconforming finite element formulation for von K\'{a}rm\'{a}n equations and the corresponding linearized problem are presented. Some properties and auxiliary results necessary for the analysis are discussed in the third subsection.

\subsection{The Morley Element}
Let $\mathcal T_h$ be a regular, quasi-uniform triangulation \cite{Brenner,Ciarlet} of $\bar\Omega$ into closed triangles. Set $h_T={\rm diam}(T)\fl T\in \mathcal{T}_h$ and $h=\max_{T\in\mathcal{T}_h}h_T$. For $T\in \mathcal{T}_h$ with vertices $a_i=(x_i,y_i),\: i=1,2,3$, let $m_4, m_5$ and $m_6$ denote the midpoints of the edges opposite to the vertices $a_1, a_2$ and $ a_3$ respectively (see Figure~\ref{fig:MorleyElement}). 
We denote the set of vertices (resp. edges) of $\mathcal{T}_h$ by $\mathfrak{V}_h$ (resp.  $\mathfrak{E}_h$). 
% $\mathfrak{E}_h^i$ the set of interior edges (resp. edges) in $\cT_h$, $\mathfrak{E}_h^b$ the set of %boundary edges, %$\mathfrak{E}_h=\mathfrak{E}_h^i\cup\mathfrak{E}_h^b$
 For $ e\in\mathfrak{E}_h$, let $h_e={\rm diam}(e)$.

\begin{figure}
\begin{center}
\begin{tikzpicture}
\node[regular polygon, regular polygon sides=3, draw, minimum size=4cm]
(m) at (0,0) {};

\fill [black] (m.corner 1) circle (2pt);
%\draw [black] (m.corner 1) circle (6pt);

\put(-62,-31){$a_1$}
\put(52,-31){$a_2$}
\put(-7,62){$a_3$}

\fill [black] (m.corner 2) circle (2pt);
%\draw [black] (m.corner 2) circle (6pt);

\fill [black] (m.corner 3) circle (2pt);
%\draw [black] (m.corner 3) circle (6pt);

\draw [-latex, thick] (m.side 1) -- ($(m.side 1)!0.5!90:(m.corner 1)$);
\put(8,8){$m_4$}
\draw [-latex, thick] (m.side 2) -- ($(m.side 2)!0.5!90:(m.corner 2)$);
\put(-22,8){$m_5$}
\draw [-latex, thick] (m.side 3) -- ($(m.side 3)!0.5!90:(m.corner 3)$);
\put(-7,-25){$m_6$}
\end{tikzpicture}
\caption{Morley element}\label{fig:MorleyElement}
\end{center}

\end{figure}
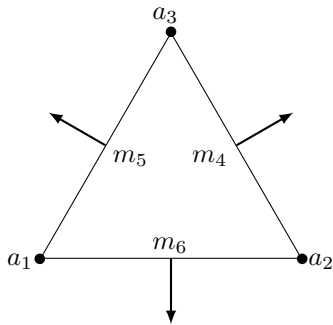

%  \begin{figure}[h]
%     \begin{center}
%     \includegraphics[height=2in,width=3in,angle=0]{MorleyElement}
%     \caption{Morley element}\label{fig:MorleyElement}
%     \end{center}
%     \end{figure}
     
  \begin{defn}\cite{ Ciarlet}
  The {\it Morley finite element} is a triplet $(T,P_T,\Phi_T)$ where
  \begin{itemize}
   \item $T$ is a triangle
   \item $P_T=P_2(T)$ is the space of all quadratic polynomials on $T$ and
   \item $\Phi_T=\{\phi_i\}_{i=1}^6$ are the degrees of freedom defined by:
\vspace{-0.1in}
   $$\phi_i(v)=v(a_i),\; i=1,2,3\text{ and }\phi_i(v)=\frac{\partial v}{\partial\nu}(m_i),\; i=4,5,6.$$
  \end{itemize} 
\end{defn}
\smallskip
 
The nonconforming {\it Morley element space} associated with the triangulation $\mathcal{T}_h$ is defined by
  \begin{align*}
      V_h:=\Big\{&\varphi \in \lt\, :\, \varphi|_{T}\in P_2(T)\fl T\in \mathcal{T}_h,\,
      \varphi\text{ is continuous at the vertices $\{ a_i\}_{i=1}^3$ of the triangle }\\
      &\text{and the normal derivatives of } \varphi\text{ at the midpoint of the edges $\{ m_i\}_{i=4}^6$ are continuous, }\\
      &\varphi =0\text{ at the vertices on }\partial\Omega,\;
      \frac{\partial \varphi }{\partial \nu} =0\text{ at the midpoint of the edges on }\partial\Omega\Big\}.
    \end{align*}

  %We denote by $\mathscr{E}_h^i$ the set of interior edges in $\mathcal{T}_h$, $\mathscr{E}_h^b$ the %set of boundary edges, $\mathscr{E}_h=\mathscr{E}_h^i\cup \mathscr{E}_h^b$ and $h_e=diam(e)\fl %e\in\mathscr{E}_h$.
%The following mesh dependent semi-norms are defined on $V_h$. The discrete
For $\varphi \in V_h$ and $\Phi=(\varphi_1,\varphi_2) \in \cV_h:=V_h\times V_h$, the mesh dependent semi-norms which are equivalent to the norms  denoted as  $|\varphi|_{2,h}$ and
$\trinl\Phi\trinr_{2,h}$, respectively, are defined by:
$$ 
 |\varphi|_{2,h}^2:=\sum_{T\in\mathcal{T}_h}|\varphi|_{2,T}^2, \quad
 %\|\varphi\|_{2,h}^2:=\sum_{T\in\mathcal{T}_h}\|\varphi\|_{2,T}^2,
\trinl\Phi\trinr_{2,h}^2:=|\varphi_{1}|_{2,h}^2+|\varphi_{2}|_{2,h}^2.
$$
Also, for a non-negative integer $m,\:1\leq p<\infty$ and $\varphi\in W^{m,p}(\Omega;\cT_h)$  
$$ 
 |\varphi|_{m,p,h}^2:=\sum_{T\in\mathcal{T}_h}|\varphi|_{m,p,T}^2,\quad
 \|\varphi\|_{m,p,h}^2:=\sum_{T\in\mathcal{T}_h}\|\varphi\|_{m,p,T}^2,
$$
and for $p=\infty$
\begin{equation*}
 |\varphi|_{m,\infty,h}:=\max_{T\in\mathcal{T}_h}|\varphi|_{m,\infty,T},\quad
 \|\varphi\|_{m,\infty,h}:=\max_{T\in\mathcal{T}_h}\|\varphi\|_{m,\infty,T},
\end{equation*}
where $|\cdot|_{m,p,T}$ and $\|\cdot\|_{m,p,T}$ denote the usual semi-norm and norm in the Banach space $W^{m,p}(T)$ and $W^{m,p}(\Omega;\cT_h)$ denotes the  broken Sobolev space with respect to the mesh $\cT_h$.  For $\Phi=(\varphi_{1},\varphi_{2})$ with $\varphi_1,\varphi_2\in W^{m,p}(\Omega;\cT_h)$, define
$\displaystyle
     \trinl\Phi\trinr_{m,p,h}^2:=|\varphi_{1}|_{m,p,h}^2+|\varphi_{2}|_{m,p,h}^2.\ $
When $p=2$, the notation is abbreviated as $|\cdot|_{m,h}$ and $\|\cdot\|_{m,h}$.
%In the sequel, the norms or the equivalent semi-norms in $V_h$ and $\cV_h$ are abbreviated using %the notations $\|\phi_h\|_{2,h}:=\|\phi_h\|_{2,h}$ and $ %\trinl\Phi_h\trinr_{2,h}:=\trinl\Phi_h\trinr_{2,h}$, respectively.

%It is clear that $\|\cdot\|_{2,h}$ is a norm on $V_h$ and the next lemma states the equivalence of the %above two norms.
%\begin{lem}{\rm (Norm Equivalence)}\cite{MingXu}\label{equinorm1}
 %    There exists a constant $C_2$ independent of $h$ such that, for any $v_h\in V_h$,
  %   \begin{equation*}
   %   |v_h|_{2,h}\leq \|v_h\|_{2,h}\leq C_2|v_h|_{2,h}.
    % \end{equation*}
    %\end{lem}   
%Now onward we consider $\|v_h\|_{2,h}:=\|v_h\|_{2,h}$ as standard norm on $V_h$.

\subsection{Nonconforming Finite Element Formulation}
 The NCFEM formulation corresponding to \eqref{wforma}-\eqref{wformb} can be stated as: for $f\in\lt$, seek $(u_h,v_h)\in \mathcal{V}_h$ such that
 \begin{subequations}\label{wformd}
   \begin{align}
    &a_h(u_h,\varphi_1)+b_h(u_h,v_h,\varphi_1)+b_h(v_h,u_h,\varphi_1)=l_h(\varphi_1)   \fl\varphi_1\in V_h \label{wformda}\\
    & a_h(v_h,\varphi_2)-b_h(u_h,u_h,\varphi_2) =0               \fl\varphi_2 \in V_h \label{wformdb}
   \end{align}
 \end{subequations}
 where $\fl\eta,\chi,\varphi\in V_h$, 
   \begin{align*}
   &  a_h(\eta,\chi):=\sit D^2 \eta:D^2\chi\dx, \; \; b_h(\eta,\chi,\varphi):=\half\sit \cof(D^2\eta)D\chi\cdot D\varphi\dx \; \mbox{ and }  \\
& l_h(\varphi):=\sit f\varphi\dx.
   \end{align*}
As in the continuous formulation,  the discrete form $b_h(\cdot,\cdot,\cdot)$ is symmetric with respect to the second and third variables. However, unlike in the conforming case \cite{Brezzi}, $b_h(\cdot,\cdot, \cdot)$ is {\it not} symmetric with respect to the first and second variables or the first and third variables. The equivalent vector form corresponding to \eqref{wformda}-\eqref{wformdb} is given by:  seek $\Psi_h=(u_h,v_h)\in \mathcal{V}_h$ such that
\begin{equation}\label{vformd}
 A_h(\Psi_h,\Phi)+B_h(\Psi_h,\Psi_h,\Phi)=L_h(\Phi) \fl \Phi \in  \mathcal{V}_h
\end{equation}
 where$\fl\, \Xi=(\xi_{1},\xi_{2}),\Theta=(\theta_{1},\theta_{2})$ and $ \Phi=(\varphi_{1},\varphi_{2})\in \mathcal{V}_h,$
  \begin{align}
      &A_h(\Theta,\Phi):=a_h(\theta_1,\varphi_1)+a_h(\theta_2,\varphi_2)\label{defnAh}, \\
      &B_h(\Xi,\Theta,\Phi):=b_h(\xi_{1},\theta_{2},\varphi_{1})+b_h(\xi_{2},\theta_{1},\varphi_{1})-b_h(\xi_{1},\theta_{1},\varphi_{2})\label{defnBh} \; \; \mbox{and}\\
      &L_h(\Phi):=\sit f\varphi_1\dx.\label{defnLh}
     \end{align} 

The nonconforming finite element formulation corresponding to \eqref{vforml} reads as: for given $G\in\ltsq$, find $\Theta_h \in {\cV}_h$ such that
   \begin{equation}\label{vformld}
      \cA_h(\Theta_h,\Phi)=(G,\Phi) \fl \Phi \in \mathcal{V}_h
   \end{equation}
where  $\cA_h(\Theta_h,\Phi):=A_h(\Theta_h,\Phi)+B_h(\Psi,\Theta_h,\Phi)+B_h(\Theta_h,\Psi,\Phi)$ and $ A_h(\cdot,\cdot),\,B_h(\cdot,\cdot,\cdot)$ are defined  in \eqref{defnAh} and \eqref{defnBh}, respectively.

\subsection{Auxiliary Results}

In this subsection, some auxiliary results which are essential for the analysis are stated.

\begin{lem} {\it (Integral average)} \cite{Braess}\label{defnia}   The projection $P_e:L^2(T) \map P_0(e)$  defined by 
%\begin{equation}
  $ \displaystyle P_e \varphi=\frac{1}{h_e}\int_e \varphi\ds$,
%\end{equation}
satisfies 
\begin{equation}\label{ia}
\|\varphi - P_e \varphi \|_{0,e} \le C h_T^{1/2} |\varphi|_{1,T} \qquad \forall \varphi \in H^1(T).
\end{equation}
\end{lem}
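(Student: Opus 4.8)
The plan is to reduce \eqref{ia} to the classical combination of a scaled trace inequality with a Poincar\'e--Wirtinger inequality on $T$, exploiting that $P_e\varphi$ is by construction a constant on $e$ equal to the mean of $\varphi$ over $e$, hence (up to a harmless factor) the best $L^2(e)$-approximation of $\varphi$ among constants.

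First I would record the elementary fact that for \emph{every} constant $c\in\bR$,
\[
\|\varphi-P_e\varphi\|_{0,e}\le 2\,\|\varphi-c\|_{0,e}.
\]
Indeed, $\|c-P_e\varphi\|_{0,e}=h_e^{1/2}\,|P_e(\varphi-c)|$, and the Cauchy--Schwarz inequality on $e$ gives $|P_e(\varphi-c)|=h_e^{-1}\big|\int_e(\varphi-c)\ds\big|\le h_e^{-1/2}\|\varphi-c\|_{0,e}$; the claim follows from $\|\varphi-P_e\varphi\|_{0,e}\le\|\varphi-c\|_{0,e}+\|c-P_e\varphi\|_{0,e}$.

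Next I would pick $c=\bar\varphi_T:=|T|^{-1}\int_T\varphi\dx$ and invoke two standard estimates on the shape-regular triangle $T$: (i) the scaled trace inequality $\|w\|_{0,e}^2\le C\big(h_T^{-1}\|w\|_{0,T}^2+h_T\,|w|_{1,T}^2\big)$ for all $w\in H^1(T)$, obtained by mapping to a reference triangle and using shape regularity (so that $h_e\sim h_T$), and (ii) the Poincar\'e--Wirtinger inequality $\|\varphi-\bar\varphi_T\|_{0,T}\le C\,h_T\,|\varphi|_{1,T}$. Applying (i) to $w=\varphi-\bar\varphi_T$, using $|w|_{1,T}=|\varphi|_{1,T}$ and then (ii), yields $\|\varphi-\bar\varphi_T\|_{0,e}^2\le C\,h_T\,|\varphi|_{1,T}^2$, and combining with the first step gives \eqref{ia}.

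There is essentially no obstacle here: the only points needing care are the correct powers of $h_T$ in the scaled trace inequality (which is exactly where the exponent $1/2$ in \eqref{ia} comes from) and the use of shape-regularity to replace $h_e$ by $h_T$ and to keep the Poincar\'e constant dependent only on the shape of $T$, not its size. Alternatively, one can argue by a single affine change of variables to a reference pair $(\hat T,\hat e)$: the map $\hat\varphi\mapsto\hat\varphi-P_{\hat e}\hat\varphi$ is bounded from $H^1(\hat T)$ into $L^2(\hat e)$ by the trace theorem and annihilates constants, so the Bramble--Hilbert (Deny--Lions) lemma bounds it by $|\hat\varphi|_{1,\hat T}$; scaling back, the $L^2(e)$-norm contributes a factor $h_e^{1/2}\le h_T^{1/2}$ while the $H^1$-seminorm is invariant under affine scaling in two dimensions, again producing \eqref{ia}.
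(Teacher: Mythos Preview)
Your argument is correct. In fact the first step can be tightened: since $P_e\varphi$ is exactly the $L^2(e)$-orthogonal projection of $\varphi$ onto constants, the best-approximation property gives $\|\varphi-P_e\varphi\|_{0,e}\le\|\varphi-c\|_{0,e}$ for every constant $c$ directly, without the factor $2$. The remainder --- choosing $c=\bar\varphi_T$, applying the scaled trace inequality together with Poincar\'e--Wirtinger, and tracking the powers of $h_T$ --- is the standard route and is carried out correctly; your alternative via affine pullback and Bramble--Hilbert is equally valid.

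As for comparison with the paper: the paper does not actually prove this lemma. It is stated with a citation to Braess and used as a known tool, so there is no in-paper proof to compare against. Your write-up supplies precisely the kind of argument one finds in the cited reference.
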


\begin{lem}{(Interpolant)}\cite{Ciarlet, ScbSungZhang, LasLes}\label{interpolant}
Let $\Pi_h:V\map V_h$ be the Morley interpolation operator defined by:
\begin{align*}
&(\Pi_h\varphi)(p)=\varphi(p)\fl p\in\mathfrak{V}_h,\\
&\int_e\frac{\partial \Pi_h \varphi}{\partial \nu}\ds=\int_e\frac{\partial \varphi}{\partial \nu}\ds \fl e\in\mathfrak{E}_h.
\end{align*}
 Then for $\varphi\in H^{2+\alpha}(\Omega)$, $\alpha\in(0,1]$, it holds:
\begin{align*}
&\|\varphi-\Pi_h\varphi\|_{m,p,h} \leq C h^{1+\alpha-m+\frac{2}{p}}\|\varphi\|_{2+\alpha},\qquad 0\leq m\leq 2, 1\leq p< \infty.
%& \|\varphi-\Pi_h\varphi\|_{m,\infty,h}\leq C h^{1+\alpha-m}\|\varphi\|_{2+\alpha},\qquad\;\;\; 0\leq %m\leq 1.
\end{align*}
\end{lem}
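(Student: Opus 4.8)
The plan is the classical one for nonconforming interpolation operators: reduce to a fixed reference triangle, apply a (fractional) Bramble--Hilbert lemma there, and scale back. First I would record that, since $\alpha>0$, the operator $\Pi_h$ is actually well defined on $H^{2+\alpha}(\Omega)$: the two-dimensional Sobolev embedding $H^{2+\alpha}(T)\hookrightarrow C^0(\overline T)$ makes the vertex functionals $\varphi\mapsto\varphi(a_i)$ bounded, while the trace theorem gives $\nabla\varphi|_e\in H^{1/2+\alpha}(e)\subset L^1(e)$, so the edge functionals $\varphi\mapsto\int_e\partial_\nu\varphi\,ds$ are bounded as well. Consequently, on the reference triangle $\widehat T$ the local Morley interpolation operator $\widehat\Pi:H^{2+\alpha}(\widehat T)\to P_2(\widehat T)$ is bounded, and combined with the embedding $H^{2+\alpha}(\widehat T)\hookrightarrow W^{m,p}(\widehat T)$ (in the relevant range of $m$ and $p$) the error operator $\widehat E:=I-\widehat\Pi$ satisfies $\|\widehat E\widehat\varphi\|_{m,p,\widehat T}\le C\|\widehat\varphi\|_{2+\alpha,\widehat T}$.

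Second, since the six Morley degrees of freedom are $P_2(\widehat T)$-unisolvent, $\widehat\Pi q=q$ and $\widehat E q=0$ for every $q\in P_2(\widehat T)$. I would then invoke the Bramble--Hilbert/Deny--Lions lemma --- in its integer form (polynomial space $P_2$, seminorm $|\cdot|_{3,\widehat T}$) when $\alpha=1$, and in its fractional Sobolev--Slobodeckij version when $\alpha\in(0,1)$ --- to upgrade the previous bound to the seminorm estimate $\|\widehat E\widehat\varphi\|_{m,p,\widehat T}\le C\,|\widehat\varphi|_{2+\alpha,\widehat T}$.

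Third, for a general $T\in\cT_h$ I would transport this estimate through the affine map $F_T:\widehat T\to T$. Although the Morley element is not affine equivalent (the edge--normal functionals do not commute with affine pullback), it is almost affine in the sense of \cite{Ciarlet}, which suffices to carry the reference estimate to $T$. Using the standard scaling inequalities --- $|v|_{j,p,T}\simeq h_T^{-j+2/p}|\widehat v|_{j,p,\widehat T}$ for integer $j\le m$, and $|v|_{2+\alpha,T}\simeq h_T^{-(1+\alpha)}|\widehat v|_{2+\alpha,\widehat T}$ for the Gagliardo seminorm --- together with shape regularity ($|\det DF_T|\simeq h_T^{2}$) and quasi-uniformity, one obtains the local bound $\|\varphi-\Pi_h\varphi\|_{m,p,T}\le C\,h_T^{\,1+\alpha-m+2/p}\,|\varphi|_{2+\alpha,T}$. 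Summing the $p$-th (or square) powers over $T\in\cT_h$, using $h_T\le h$ and the localization inequality $\sum_{T\in\cT_h}|\varphi|_{2+\alpha,T}^2\le|\varphi|_{2+\alpha,\Omega}^2\le\|\varphi\|_{2+\alpha}^2$ for the fractional seminorm, then yields the asserted estimate.

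The step I expect to require the most care is the fractional case $\alpha\in(0,1)$: one needs the fractional Bramble--Hilbert lemma and, above all, the correct scaling of the Sobolev--Slobodeckij seminorm under $F_T$ (this is exactly what produces the exponent $1+\alpha-m+2/p$), and one must check that $H^{2+\alpha}(\widehat T)\hookrightarrow W^{m,p}(\widehat T)$ in the range of $(m,p)$ actually used, so that both sides are finite --- the cases $m\in\{0,1\}$ (any $p$) and $m=2$ with $p$ moderate being the ones needed later. The almost-affine reduction for the Morley element is the other technical point, but it is by now classical.
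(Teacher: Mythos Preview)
The paper does not supply a proof of this lemma: it is stated with citations to \cite{Ciarlet, ScbSungZhang, LasLes} and used as a black box. Your sketch is the standard argument one finds in those references---reduce to the reference element, invoke the (fractional) Bramble--Hilbert/Deny--Lions lemma using that $\widehat\Pi$ reproduces $P_2$, and scale back via the almost-affine machinery for Morley---so there is no methodological contrast to draw.

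Two minor points worth tightening. First, the broken norm in this paper is defined as an $\ell^2$-sum of local $W^{m,p}$ seminorms, $|\varphi|_{m,p,h}^2=\sum_T|\varphi|_{m,p,T}^2$, not an $\ell^p$-sum; your ``summing the $p$-th (or square) powers'' should therefore be a square sum, and the passage from $h_T$ to $h$ then relies on quasi-uniformity exactly as you note. Second, the embedding $H^{2+\alpha}(\widehat T)\hookrightarrow W^{m,p}(\widehat T)$ that you invoke fails for $m=2$ once $p$ exceeds $2/(1-\alpha)$; this is harmless for the uses in the paper (only $p=2$ and $p=4$ appear, and $\alpha>1/2$), but the lemma as stated with ``$1\le p<\infty$'' is slightly over-generous, and your caveat about ``$p$ moderate'' is the honest restriction.
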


 For simplicity of notation, the interpolant of  $\Phi \in \cV$ is denoted by $\Pi_h \Phi$ and belongs to $\cV_h$.

\begin{lem}{ (Enrichment function)}\label{enrich1}\cite{ScbSungZhang}
    Let $V_c$ be chosen as {\it Hsieh-Clough-Tocher} macro element space \cite{ScbSungZhang,Ciarlet} which is a conforming relative of the Morley finite element space $V_h$. For any $\varphi \in V_h$, there exists ${E}_h\varphi \in V_c \subset V$ such that 
\begin{align}
\sum_{T \in \mathcal{T}_h} \left( h_T^{-4}|\varphi - E_h\varphi|^2_{0,T} + 
 h_T^{-2}|\varphi - E_h\varphi |^2_{1,T} 
 \right) + |E_h \varphi |^2_{2,h}
&\leq C|\varphi|^2_{2,h}.
 \end{align}
\end{lem}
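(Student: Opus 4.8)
The plan is to construct $E_h\varphi$ explicitly by a nodal averaging (Oswald-type) procedure and then estimate the error by a scaling argument on each element. First I would fix the conforming target space $V_c$ as the Hsieh–Clough–Tocher macro-element space, whose degrees of freedom on a triangle $T$ are the function values and gradients at the three vertices together with the normal derivatives at the three edge midpoints (so that $V_c \subset H^2_0(\Omega)$). Given $\varphi \in V_h$, I would define $E_h\varphi \in V_c$ by prescribing each degree of freedom as an average of the corresponding quantities taken from the Morley functions $\varphi|_T$ on the elements $T$ sharing that node: at an interior vertex $p$, set the value of $E_h\varphi$ to $\varphi(p)$ (well-defined since Morley functions are continuous at vertices) and the gradient to the average of $\nabla(\varphi|_T)(p)$ over all $T\ni p$; at an edge midpoint $m$, set $\partial_\nu E_h\varphi(m)$ to $\partial_\nu\varphi(m)$ (again single-valued for Morley). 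At vertices and midpoints on $\partial\Omega$, all relevant degrees of freedom are set to zero, which is consistent with $\varphi\in V_h$ vanishing there and forces $E_h\varphi\in H^2_0(\Omega)$.

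Next, the estimate. The key point is that on each element $T$, the difference $(\varphi - E_h\varphi)|_T$ is a polynomial (of degree at most $3$, piecewise on the HCT sub-triangles) whose degrees of freedom are precisely the \emph{jumps} of $\varphi$ and its derivatives across edges, evaluated at the vertices and edge midpoints of $T$ or its neighbours. By the standard finite element scaling/equivalence of norms on the reference element, I would bound $h_T^{-4}|\varphi-E_h\varphi|_{0,T}^2 + h_T^{-2}|\varphi-E_h\varphi|_{1,T}^2 + |E_h\varphi|_{2,T}^2$ by $h_T^{-2}$ times the sum of squared nodal jump quantities over $T$ and its neighbours, i.e.\ terms like $|[\![\nabla\varphi]\!](p)|^2$ at vertices $p$ and $|[\![\partial_\nu\varphi]\!](m)|^2$ — but the latter vanish by the definition of $V_h$, so only the vertex-gradient jumps survive, together with scaling factors. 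Then I would use a trace/inverse inequality together with Lemma~\ref{defnia} (the integral-average estimate, which says $\partial_\nu\varphi$ has continuous edge-averages for $\varphi\in V_h$, hence $\|[\![\partial_\nu\varphi]\!]\|_{0,e}^2 \le C h_T |\varphi|_{2,h,\omega_e}^2$ on the edge patch $\omega_e$), and a similar bound for the tangential-derivative and gradient jumps at vertices, to convert these nodal jump quantities into $C\sum_{T'\in\omega}|\varphi|_{2,T'}^2$ where $\omega$ is the local patch around $T$. Summing over $T\in\mathcal{T}_h$ and using the finite overlap of patches (from shape-regularity) gives the asserted bound $C|\varphi|_{2,h}^2$.

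The main obstacle I anticipate is the careful treatment of the vertex degrees of freedom: unlike edge-midpoint normal derivatives, the full gradient of a Morley function is \emph{not} continuous at vertices, so the jump $[\![\nabla\varphi]\!](p)$ is genuinely nonzero and must be estimated. The trick is that each component of this jump can be written as a telescoping sum of jumps across the edges emanating from $p$, and the jump of a (piecewise $P_2$) function's gradient across an edge $e$ is controlled by combining (i) the continuity of the value of $\varphi$ at the two endpoints of $e$, which pins down the jump of the tangential derivative along $e$ up to a one-dimensional polynomial correction, and (ii) the continuity of $\int_e\partial_\nu\varphi\,ds$ plus the midpoint normal-derivative continuity, which controls the jump of the normal component — all reducible via scaling to $|\varphi|_{2,h}$ on the neighbouring triangles. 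Assembling these patch-wise estimates with the right powers of $h_T$ and checking that no logarithmic factors or mesh-ratio dependencies sneak in (here quasi-uniformity and shape-regularity are used) is the technically delicate part; everything else is routine scaling and summation, so I would organize the proof as: (1) define $E_h\varphi$ by nodal averaging; (2) reduce the left-hand side on each $T$ to weighted nodal jump quantities by scaling; (3) bound each nodal jump by local $|\cdot|_{2,h}$ seminorms using Lemma~\ref{defnia} and trace/inverse estimates; (4) sum with finite overlap.
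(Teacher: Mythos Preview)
The paper does not supply its own proof of this lemma; it is stated with a citation to \cite{ScbSungZhang} and used as a black box throughout Sections~3 and~4. Your outline---define $E_h$ by nodal averaging into the HCT space, reduce the local error to weighted jump quantities by scaling on the reference element, and bound the surviving vertex-gradient jumps via the edgewise constraints built into the Morley space (vertex continuity controls tangential-derivative jumps, midpoint normal-derivative continuity controls normal jumps)---is exactly the construction in the cited reference and is correct.

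A couple of small points to tidy if you carry this through. First, your invocation of Lemma~\ref{defnia} is not really needed here: since $\varphi|_T\in P_2(T)$, all the jump quantities are \emph{linear} along each edge, and the Morley constraints (endpoint continuity of $\varphi$, midpoint continuity of $\partial_\nu\varphi$) pin down one coefficient of each linear jump, so the remaining coefficient is directly a second difference of $D^2\varphi$ across the edge---no integral-average projection is required. Second, at boundary vertices you should note that setting the gradient DOF of $E_h\varphi$ to zero is not automatically consistent with $\varphi$ (the Morley function need not have zero tangential derivative along $\partial\Omega$), so those boundary ``jumps'' must be estimated in the same way as interior ones; this is routine but should be mentioned. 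With those adjustments your plan matches the standard argument in \cite{ScbSungZhang}.
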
   

Again, for $\Phi \in \cV_h$,  the enrichment function corresponding to $\Phi$   denoted by  $E_h \Phi$,  belongs to $\cV$.

\medskip
In the next lemma, we establish an imbedding result. A similar result has been proved in \cite[Lemma 3.1]{XLR} for the case of {\it convex} polygonal domains. However, for the sake of completeness, we provide a detailed proof for the case of polygonal domains. Note that only the edge estimation in \eqref{intbyparts} is different from the proof  in \cite{XLR}.

\begin{lem}(An imbedding result)\label{imdedding}
For $\varphi\in V_h$, it holds:
\begin{equation*}
|\varphi|_{1,4,h}\leq C|\varphi|_{2,h}.
\end{equation*}
\end{lem}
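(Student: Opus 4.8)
The plan is to establish the discrete embedding $|\varphi|_{1,4,h} \le C|\varphi|_{2,h}$ by first passing to the conforming enriched function $E_h\varphi \in V_c \subset V \subset H^2_0(\Omega)$ provided by Lemma~\ref{enrich1}, applying the classical continuous Sobolev embedding $H^1_0(\Omega) \hookrightarrow L^4(\Omega)$ (equivalently $H^2_0(\Omega)\hookrightarrow W^{1,4}(\Omega)$ in two dimensions), and then controlling the discrepancy $\varphi - E_h\varphi$ triangle by triangle using the approximation bounds in Lemma~\ref{enrich1} together with inverse estimates on each $T$. Concretely, I would write $|\varphi|_{1,4,h} \le |E_h\varphi|_{1,4,h} + |\varphi - E_h\varphi|_{1,4,h}$. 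For the first term, since $E_h\varphi$ is globally $H^2_0$, one has $|E_h\varphi|_{1,4,h} = |E_h\varphi|_{1,4,\Omega} \le C\|D^2 E_h\varphi\|_{0,\Omega} = C|E_h\varphi|_{2,h} \le C|\varphi|_{2,h}$, the last step by Lemma~\ref{enrich1}.

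For the second term, on each $T \in \mathcal{T}_h$ the function $\varphi - E_h\varphi$ is a polynomial (piecewise quadratic minus an HCT macro-element piece), so an inverse inequality of the form $|w|_{1,4,T} \le C h_T^{-1/2}|w|_{1,2,T}$ for $w \in P_k(T)$ on a shape-regular triangle gives $|\varphi - E_h\varphi|_{1,4,T} \le C h_T^{-1/2}|\varphi - E_h\varphi|_{1,2,T}$. Then by Lemma~\ref{enrich1}, $h_T^{-1}|\varphi - E_h\varphi|_{1,T}^2 \le C|\varphi|_{2,T}^2$, so summing over $T$ and using the discrete $\ell^4 \le \ell^2$ embedding of the sequence of local norms yields $|\varphi - E_h\varphi|_{1,4,h} \le C\big(\sum_T h_T^{-2}|\varphi - E_h\varphi|_{1,T}^2\big)^{1/2} \le C|\varphi|_{2,h}$. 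Combining the two pieces gives the claim; the quasi-uniformity of $\mathcal{T}_h$ is what lets the local inverse constants be uniform, though shape-regularity alone suffices here since each estimate is local.

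The main obstacle — and the reason the remark before the statement flags that "only the edge estimation in \eqref{intbyparts} is different from the proof in \cite{XLR}" — is justifying the continuous embedding step for $E_h\varphi$ cleanly when $\Omega$ is a general (possibly non-convex) polygon rather than a convex one: one must be sure the Sobolev inequality $\|w\|_{0,4,\Omega} \le C\|w\|_{1,\Omega}$ and its gradient analogue hold with a constant depending only on $\Omega$, and that integration-by-parts identities used to bound boundary/edge contributions of $E_h\varphi$ do not pick up spurious corner terms. Since $E_h\varphi \in H^2_0(\Omega)$ has zero trace and zero normal derivative on $\partial\Omega$, the boundary terms vanish and the embedding holds on any bounded Lipschitz (hence polygonal) domain, so the argument goes through; the edge estimate alluded to is the careful accounting of the jump/midpoint contributions when bounding $|\varphi - E_h\varphi|$ on element boundaries, which is exactly what Lemma~\ref{enrich1} (via \cite{ScbSungZhang}) packages for us. I would therefore spend the bulk of the write-up making the inverse-estimate-plus-enrichment bound on $\varphi - E_h\varphi$ precise, and invoke the rest as standard.
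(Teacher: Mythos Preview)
Your argument is correct and considerably shorter than the paper's, but it follows a genuinely different route. The paper does \emph{not} use the enrichment operator here at all: instead it observes that the partial derivatives $\varphi_x,\varphi_y$ of a Morley function lie in the Crouzeix--Raviart space $S_h$, reduces the claim to $|w|_{0,4}\le C|w|_{1,h}$ for $w\in S_h$, and proves the latter by a duality argument against the Poisson problem $-\Delta\xi=w^3$. The ``edge estimation in \eqref{intbyparts}'' you quote refers to the boundary term $\sum_T\int_{\partial T}(\partial\xi/\partial\nu)\, w\,ds$ arising from that integration by parts, which is controlled via a Crouzeix--Raviart interpolant $I_h\xi$ and the Laplacian's regularity index $\gamma\in(\tfrac12,1]$ on the possibly non-convex polygon; it has nothing to do with jumps of $E_h\varphi$, so your closing paragraph misreads the hint. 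Your approach trades this duality machinery for the ready-made bound of Lemma~\ref{enrich1}: split $\varphi=(E_h\varphi)+(\varphi-E_h\varphi)$, apply the continuous embedding $H^2_0(\Omega)\hookrightarrow W^{1,4}(\Omega)$ (valid on any bounded Lipschitz domain, since $\nabla E_h\varphi\in (H^1_0(\Omega))^2$) to the first piece, and a local inverse inequality to the second. This is cleaner and even gains an extra factor of $h^{1/2}$ on the correction term; the paper's route, by contrast, is self-contained in that it does not presuppose the enrichment construction. The only point to tighten in your write-up is that $\varphi-E_h\varphi$ restricted to $T$ is piecewise polynomial on the HCT macro-subdivision rather than a single polynomial, so the inverse inequality $|w|_{1,4,T}\le Ch_T^{-1/2}|w|_{1,2,T}$ should be invoked on that (still shape-regular, fixed-degree) reference configuration --- you already note this parenthetically, just make it explicit.
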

\begin{proof}
The tangential and normal derivative of $\varphi\in V_h$ are continuous at the midpoint of each edges of $T\in\cT_h$. That is $\varphi_x,\,\varphi_y\in S_h$ where $S_h$ is the nonconforming Crouzeix-Raviart finite element space defined by
 \begin{align*}
      S_h:=\Big\{& w \in \lt\, :\, w|_{T}\in P_1(T)\fl T\in \mathcal{T}_h,\, w \text{ is continuous at the midpoints of}\\
      &\text{ the triangle edges and }
       w =0\text{ at the midpoint of the edges on }\partial\Omega\Big\}.
    \end{align*}
It is enough to prove
$
|w|_{0,4,h}\leq |w|_{1,h}\fl w\in S_h.
$

\noindent Consider the auxiliary problem: given $\theta \in H^{-1}(\Omega)$, seek $\xi$ such that 
\begin{align}\label{auxlap}
-\Delta \xi&=\theta \text{  in } \Omega,\quad\xi=0\text{  on } \partial\Omega.
\end{align}
The solution satisfies the following $a~priori$ bounds 
\begin{equation}\label{apb}
\|\xi\|_1\leq  C\|\theta\|_{-1},\; \|\xi\|_{1+\gamma}\leq C\|\theta\|, 
\end{equation}
where $\gamma\in (\half,1]$ denotes the elliptic regularity of the problem \eqref{auxlap}.
Let $I_h\xi\in S_h$ be an interpolant  which satisfies the estimate \cite{B, Brenner}
\begin{equation}\label{CRIh}
|\xi-I_h\xi|_{0,h}+h|\xi-I_h\xi|_{1,h}\leq Ch^{1+\gamma}\|\xi\|_{1+\gamma}.
\end{equation}
A multiplication of  \eqref{auxlap} with $w$  and a use of Green's formula leads to 
\begin{equation}\label{intbyparts}
(\theta,w)=(-\Delta \xi,w)=\sum_{T\in\cT_h}(\nabla \xi,\nabla w)-\sum_{T\in \cT_h} \int_{\partial T}\frac{\partial\xi}{\partial\nu} w\ds
\end{equation}
The boundary term can be estimated as follows:
\begin{align*}
\sum_{T\in \cT_h} \int_{\partial T}\frac{\partial\xi}{\partial\nu} w\ds
&=\sum_{T\in \cT_h}\sum_{e\subset \partial T} \int_{e}\frac{\partial\xi}{\partial\nu} (w-P_e w)\ds.
\end{align*}
Since $\displaystyle \int_e(w-P_e w)\ds=0 \; \:\forall e\in\mathfrak{E}_h$ and $\displaystyle \frac{\partial }{\partial \nu} I_h \xi$ is a constant over each edge, we obtain
\begin{align*}
\sum_{T\in \cT_h}\sum_{e\subset \partial T} \int_{e}\frac{\partial\xi}{\partial\nu} (w-P_e w)\ds&=
\sum_{T\in \cT_h}\sum_{e\subset \partial T} \int_{e}\frac{\partial}{\partial\nu} (\xi-I_h\xi)(w-P_e w)\ds\\
&\leq\sum_{T\in \cT_h}\sum_{e\subset \partial T}\|\xi-I_h\xi\|_{1,e}\|w-P_e w\|_{0,e}.
\end{align*}
A use of trace theorem, Lemma~\ref{defnia} and \eqref{CRIh} leads to the estimate
\begin{equation}
\Big{|}-\sum_{T\in \cT_h} \int_{\partial T}\frac{\partial\xi}{\partial\nu} w\ds\Big{|}\leq C h^\gamma\|\xi\|_{1+\gamma}|w|_{1,h}.
\end{equation}
Therefore, the {\it a priori} bounds in \eqref{apb} yields
\begin{align}\label{bdw}
(\theta,w)\leq (|\xi|_1+Ch^\gamma\|\xi\|_{1+\gamma})|w|_{1,h}\leq C(\|\theta\|_{-1}+h^\gamma\|\theta\|)|w|_{1,h}.
\end{align}
A choice of $\theta=w^3$ in \eqref{bdw} leads to
\begin{equation}\label{w04h}
|w|_{0,4,h}^4\leq C(\|w^3\|_{-1}+h^\gamma \|w^3\|)|w|_{1,h}.
\end{equation}
A use of inverse inequality yields
\begin{equation}\label{w3l}
\|w^3\|=\|w\|_{L^6(\Omega)}^3\leq Ch^{-\frac{1}{4}}\|w\|_{L^4(\Omega)}^3.
\end{equation}
Also, \Holder inequality and the  imbedding result $L^4(\Omega)\hookrightarrow H_0^1(\Omega)$ lead to
\begin{equation}\label{w3dual}
(w^3,\xi)=\|w\|_{L^4(\Omega)}^3\|\xi\|_{L^4(\Omega)}\leq C\|w\|_{L^4(\Omega)}^3|\xi|_{1} \Longrightarrow \|w^3\|_{-1}\leq C\|w\|_{L^4(\Omega)}^3.
\end{equation}
%\begin{equation}\label{w3dual}
%{\rm Therefore, \; \; }\|w^3\|_{-1}\leq C\|w\|_{L^4(\Omega)}^3.
%\end{equation}
Hence, a use of \eqref{w3l} and \eqref{w3dual} in \eqref{w04h} leads to the required result
$$
|w|_{0,4,h}\leq C(1+h^{\gamma-\frac{1}{4}})|w|_{1,h}\leq C|w|_{1,h}.
$$
%This completes the proof.
\end{proof}

The next lemma follows from \cite[Lemmas 4.2 \& 4.3]{ScbSungZhang}.
\begin{lem}(Bounds for $A_h(\cdot,\cdot)$)\label{enrichreg}
(i) Let $\boldsymbol{\chi}\in (H^{2+\alpha}(\Omega))^2$ and $\Phi\in\cV_h$. Then, it holds
\begin{equation*}
A_h(\boldsymbol{\chi},E_h\Phi-\Phi)\leq C h^\alpha\trinl\boldsymbol{\chi}\trinr_{2+\alpha}\trinl\Phi\trinr_{2,h}.
\end{equation*}
(ii) Further, for $\boldsymbol{\chi}\in (H^{2+\alpha}(\Omega))^2$ and $\Phi\in (H^{2}_0(\Omega))^2\cap (H^{2+\alpha}(\Omega))^2 $, it holds
\begin{equation*}
A_h(\boldsymbol{\chi},\Pi_h\Phi-\Phi)\leq C h^{2\alpha}\trinl\boldsymbol{\chi}\trinr_{2+\alpha}\trinl\Phi\trinr_{2+\alpha}.
\end{equation*}
\end{lem}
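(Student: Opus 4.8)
\emph{Plan.} I would prove both estimates componentwise. Since $A_h(\boldsymbol{\chi},\Phi)=a_h(\chi_1,\varphi_1)+a_h(\chi_2,\varphi_2)$ and the product norms $\trinl\cdot\trinr_{2,h}$, $\trinl\cdot\trinr_{2+\alpha}$ split into the corresponding scalar norms, it suffices to establish, for $\chi\in H^{2+\alpha}(\Omega)$, the two scalar inequalities
\[
a_h(\chi,E_h\varphi-\varphi)\le Ch^\alpha\|\chi\|_{2+\alpha}\,|\varphi|_{2,h}\quad(\varphi\in V_h),\qquad a_h(\chi,\Pi_h\varphi-\varphi)\le Ch^{2\alpha}\|\chi\|_{2+\alpha}\,\|\varphi\|_{2+\alpha}\quad(\varphi\in \hto\cap H^{2+\alpha}(\Omega)),
\]
and then sum using the Cauchy--Schwarz inequality in $\bR^2$. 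In both cases the device is to replace, on each $T\in\cT_h$, the matrix $D^2\chi$ by its integral average $\overline{D^2\chi}_T:=|T|^{-1}\int_T D^2\chi\dx$ (a constant matrix), exploiting the fractional approximation bound $\|D^2\chi-\overline{D^2\chi}_T\|_{0,T}\le Ch_T^\alpha|\chi|_{2+\alpha,T}$; note that no trace of $D^2\chi$ is ever needed, which is what makes the low regularity $\chi\in H^{2+\alpha}$ (with $\alpha$ possibly $<1$) harmless.

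\emph{Part (ii).} The key fact is the classical Morley--interpolant identity $\int_T D^2(\Pi_h\varphi-\varphi)\dx=0$ for every $T$. Indeed $\int_T D^2\psi\dx=\int_{\partial T}(\nabla\psi)\otimes\nu\ds$, and on each edge $e\subset\partial T$ one has $\int_e\nabla\psi\ds=\nu\int_e\partial_\nu\psi\ds+\tau\bigl(\psi(b)-\psi(a)\bigr)$ with $a,b$ the endpoints of $e$; hence $\int_T D^2\psi\dx$ depends only on the edge normal-derivative integrals and the vertex values, i.e.\ on the degrees of freedom preserved by $\Pi_h$ (Lemma~\ref{interpolant}). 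Therefore on each $T$ we may insert $\overline{D^2\chi}_T$ for free, $\int_T D^2\chi:D^2(\Pi_h\varphi-\varphi)\dx=\int_T(D^2\chi-\overline{D^2\chi}_T):D^2(\Pi_h\varphi-\varphi)\dx$, and a Cauchy--Schwarz summation over $T$ combined with the approximation bound above and the interpolation estimate $|\Pi_h\varphi-\varphi|_{2,h}\le Ch^\alpha\|\varphi\|_{2+\alpha}$ (Lemma~\ref{interpolant}, $m=2$, $p=2$) gives, using quasi-uniformity, the claimed bound $Ch^{2\alpha}\|\chi\|_{2+\alpha}\|\varphi\|_{2+\alpha}$.

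\emph{Part (i).} Put $w:=E_h\varphi-\varphi$, so $w|_T\in H^2(T)$, and recall from Lemma~\ref{enrich1} that $|w|_{2,h}\le C|\varphi|_{2,h}$ and $\sumt h_T^{-2}|w|_{1,T}^2\le C|\varphi|_{2,h}^2$. I would split
\[
a_h(\chi,w)=\sumt\int_T\bigl(D^2\chi-\overline{D^2\chi}_T\bigr):D^2w\dx\;+\;\sumt\overline{D^2\chi}_T:\int_T D^2w\dx .
\]
The first sum is handled exactly as in part (ii), giving $Ch^\alpha|\chi|_{2+\alpha}|w|_{2,h}\le Ch^\alpha|\chi|_{2+\alpha}|\varphi|_{2,h}$. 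For the second sum, integrate by parts, $\int_T D^2w\dx=\int_{\partial T}(\nabla w)\otimes\nu\ds$, and regroup the contributions over edges. Because $E_h\varphi\in V$ has a continuous gradient while $\varphi\in V_h$ has continuous normal derivative at edge midpoints (so $\int_e[\![\partial_\nu\varphi]\!]\ds=0$) and is continuous at the vertices (so $[\![\varphi]\!]|_e$ is a multiple of the quadratic edge bubble and hence $\int_e[\![\partial_\tau\varphi]\!]\ds=0$), the vector $\int_e[\![\nabla w]\!]\ds$ vanishes on every interior edge; on boundary edges $\int_e\nabla w\ds=0$ since $E_h\varphi$ and $\varphi$ both vanish together with their normal derivatives on $\partial\Omega$. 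Consequently the boundary edges contribute nothing and on an interior edge $e=\partial T_+\cap\partial T_-$ the remaining contribution collapses to $\bigl((\overline{D^2\chi}_{T_+}-\overline{D^2\chi}_{T_-})\nu\bigr)\cdot\int_e\nabla w|_{T_+}\ds$. Using $|\overline{D^2\chi}_{T_+}-\overline{D^2\chi}_{T_-}|\le Ch^{\alpha-1}|\chi|_{2+\alpha,\omega_e}$ (a Poincar\'e-type bound on the edge patch $\omega_e$) and $\bigl|\int_e\nabla w|_{T_+}\ds\bigr|\le Ch_e^{1/2}\|\nabla w\|_{0,e}\le C\bigl(|w|_{1,T_+}+h_{T_+}|w|_{2,T_+}\bigr)$ (trace theorem and scaling), a Cauchy--Schwarz summation over edges together with the two bounds of Lemma~\ref{enrich1} yields $Ch^\alpha|\chi|_{2+\alpha}|\varphi|_{2,h}$, which completes (i).

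\emph{Main obstacle.} The delicate step is the second sum in part (i): one must identify precisely which traces of $\nabla w$ on inter-element edges have vanishing mean — exactly where the Morley degrees of freedom enter — and then match powers of $h$, since the jump of the piecewise-constant $\overline{D^2\chi}$ across an edge costs a factor $h^{\alpha-1}$ that is recovered only through the extra factor $h$ produced by $\int_e\nabla w\ds$, so that after Cauchy--Schwarz over the $O(h^{-2})$ edges the bound still closes at the optimal order $h^\alpha$. Everything else — the reduction to scalar forms, the fractional estimate for $D^2\chi-\overline{D^2\chi}_T$, and the recombination into the triple-bar norms — is routine.
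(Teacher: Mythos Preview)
The paper does not prove this lemma itself; it merely records that the result ``follows from \cite[Lemmas~4.2 \& 4.3]{ScbSungZhang}''.  Your self-contained argument is correct and is essentially the standard proof underlying those cited results: the mean-zero property $\int_T D^2(\Pi_h\varphi-\varphi)\dx=0$ is precisely the engine for part~(ii), and in part~(i) the edge regrouping, using that $\int_e[\![\nabla w]\!]\ds=0$ thanks to the Morley degrees of freedom, together with the fractional Poincar\'e bound on the jump of the piecewise-constant $\overline{D^2\chi}$, is the usual device in the Brenner--Sung--Zhang analysis.

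One minor imprecision worth fixing: on boundary edges you assert that ``$E_h\varphi$ and $\varphi$ both vanish together with their normal derivatives on $\partial\Omega$'', which is literally true for $E_h\varphi\in H^2_0(\Omega)$ but \emph{not} for $\varphi\in V_h$ --- a Morley function only vanishes at boundary \emph{vertices} and its normal derivative only at edge \emph{midpoints}.  The conclusion $\int_e\nabla\varphi\ds=0$ on a boundary edge is nonetheless correct, by exactly the same tangential/normal decomposition you used on interior edges ($\int_e\partial_\tau\varphi\ds=\varphi(b)-\varphi(a)=0$ and $\int_e\partial_\nu\varphi\ds=h_e\,\partial_\nu\varphi(m_e)=0$).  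With that wording corrected, the $h$-bookkeeping closes exactly as you describe.
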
  

%\begin{lem}\label{boundbh1} (A bound for $b_h(\cdot,\cdot,\cdot)$)
%For $\eta,\varphi\in V+V_h$ and $ \chi\in (V\cap H^{2+\alpha}(\Omega))+V_h$, it holds: 
%\begin{equation*}
%|b_h(\eta,\chi,\varphi)| \leq |\eta|_{2,h}\,|\chi|_{1, \infty, h}|\varphi|_{1,h}
%\end{equation*}
%\end{lem}
%\begin{proof}
%Recall that %the definition of $b_h(\cdot,\cdot, \cdot)$
%\begin{equation}\label{defnbh3}
%  b_h(\eta,\chi,\varphi)=\half\integ \cof(D^2\eta)D\chi\cdot D\varphi\dx=\half\sit\left((\eta_{yy}\chi_{x}-\eta_{xy}\chi_{y})\varphi_x+(\eta_{xx}\chi_{y}-\eta_{xy}\chi_{x})\varphi_y\right)\dx
% \end{equation}
%The first term in \eqref{defnbh3} can be bounded by
%\begin{align*}
%\half\Big{|} \sit \eta_{yy}\chi_x\varphi_x \dx \Big{|} &\leq\sum_{T\in\cT_h} |\chi|_{1,\infty,T}  \Big{|} \int_T\eta_{yy}\varphi_x\dx \Big{|}\leq |\chi|_{1,\infty,h}\sum_{T\in\cT_h}|\eta|_{2,T}\,|\varphi|_{1,T}\\
%&\leq|\eta|_{2,h}|\chi|_{1, \infty,h}|\varphi|_{1,h}.
%\end{align*}
%Similar bounds hold true for the remaining terms in \eqref{defnbh3} and hence the required result follows.
%\end{proof}

A use of the definition of $B_h(\cdot, \cdot,\cdot)$,  generalized \Holder inequality and Lemma~\ref{imdedding} leads to a bound  given by 
\begin{equation} \label{boundBh1}
 B_h(\Xi,\Theta,\Phi)  \leq C_b\trinl \Xi\trinr_{2,h} \trinl \Theta\trinr_{2,h} \trinl \Phi\trinr_{2,h},
\end{equation}
where $C_b$ is a positive constant independent of $h$.

\begin{lem}\label{boundBh2} (A bound for $B_h(\cdot,\cdot,\cdot)$) For $\Xi\in (H^{2+\alpha}(\Omega))^2$ and $\Theta,\Phi\in \cV+\cV_h$, there holds
\begin{equation*}
  B_h(\Xi,\Theta,\Phi)\leq C\trinl \Xi\trinr_{2+\alpha} \trinl \Theta\trinr_ {1,4,h}\trinl \Phi\trinr_{1,h}\leq C\trinl \Xi\trinr_{2+\alpha} \trinl \Theta\trinr_ {2,h}\trinl \Phi\trinr_{1,h}.
\end{equation*}
\end{lem}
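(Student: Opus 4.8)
The plan is to split $B_h$ into its three trilinear pieces and bound each by a triangle-wise H\"older estimate, the essential ingredient being that the coefficient $\cof(D^2\Xi)$ lies in $L^4(\Omega)$ as soon as $\Xi\in(H^{2+\alpha}(\Omega))^2$ with $\alpha>1/2$. By the definition \eqref{defnBh} of $B_h$, it suffices to show that for every $\eta\in H^{2+\alpha}(\Omega)$ and every $\chi,\varphi\in V+V_h$,
$$
b_h(\eta,\chi,\varphi)=\half\sit \cof(D^2\eta)\,D\chi\cdot D\varphi\dx\;\le\; C\,\|\eta\|_{2+\alpha}\,|\chi|_{1,4,h}\,|\varphi|_{1,h}.
$$
Adding the bounds for $b_h(\xi_1,\theta_2,\varphi_1)$, $b_h(\xi_2,\theta_1,\varphi_1)$ and $-b_h(\xi_1,\theta_1,\varphi_2)$ then yields the first asserted inequality, with $\trinl\Xi\trinr_{2+\alpha}$, $\trinl\Theta\trinr_{1,4,h}$, $\trinl\Phi\trinr_{1,h}$ on the right-hand side.

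For the single term I would first observe that the entries of $\cof(D^2\eta)$ are, up to sign and permutation, exactly the entries $\eta_{xx},\eta_{xy},\eta_{yy}$ of $D^2\eta$, hence each lies in $H^\alpha(\Omega)$. Since $\Omega\subset\bR^2$ and $\alpha>1/2$, the Sobolev embedding $H^\alpha(\Omega)\hookrightarrow L^4(\Omega)$ gives $\|\cof(D^2\eta)\|_{L^4(\Omega)}\le C\|D^2\eta\|_{H^\alpha(\Omega)}\le C\|\eta\|_{2+\alpha}$. Next, on each $T\in\cT_h$ one has $\chi|_T,\varphi|_T\in H^2(T)$, so $D\chi|_T\in L^4(T)$ and $D\varphi|_T\in L^2(T)$, and the generalized H\"older inequality with exponents $(4,4,2)$ yields
$$
\int_T \cof(D^2\eta)\,D\chi\cdot D\varphi\dx\le C\,\|\cof(D^2\eta)\|_{L^4(T)}\,\|D\chi\|_{L^4(T)}\,\|D\varphi\|_{L^2(T)}.
$$
Summing over $T$, bounding $\|\cof(D^2\eta)\|_{L^4(T)}\le\|\cof(D^2\eta)\|_{L^4(\Omega)}$ on each element, and applying the Cauchy--Schwarz inequality to $\sum_T\|D\chi\|_{L^4(T)}\|D\varphi\|_{L^2(T)}$ produces the claimed bound, since the two resulting factors are precisely $|\chi|_{1,4,h}$ and $|\varphi|_{1,h}$. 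I expect this embedding step to be the only delicate point: one must use \emph{only} the broken norms of $\chi$ and $\varphi$ (no global regularity, no integration by parts across edges is available for these genuinely nonconforming functions), and one must recognize that it is exactly the hypothesis $\alpha>1/2$ on the elliptic-regularity index that places the second derivatives of $\Xi$ in $L^4$ in two space dimensions; the rest is routine bookkeeping with H\"older and Cauchy--Schwarz.

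Finally, the second inequality follows at once from the imbedding result: for the nonconforming component of $\Theta$ one invokes Lemma~\ref{imdedding} componentwise, $|\cdot|_{1,4,h}\le C|\cdot|_{2,h}$, and for the conforming component one uses the continuous two-dimensional Sobolev embedding $\hto\hookrightarrow W^{1,4}(\Omega)$; hence $\trinl\Theta\trinr_{1,4,h}\le C\trinl\Theta\trinr_{2,h}$ (and in the intended applications $\Theta$ lies entirely in $\cV$ or entirely in $\cV_h$, so this reduction is immediate). Combining this with the first inequality completes the proof.
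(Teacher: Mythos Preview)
Your proposal is correct and follows essentially the same route as the paper: both reduce to bounding $b_h(\eta,\chi,\varphi)$, apply the generalized H\"older inequality with exponents $(4,4,2)$ on each triangle, invoke the Sobolev embedding $H^{2+\alpha}(\Omega)\hookrightarrow W^{2,4}(\Omega)$ (equivalently, $H^\alpha\hookrightarrow L^4$ applied to $D^2\eta$) to control the coefficient, and then use Lemma~\ref{imdedding} for the second inequality. The only cosmetic difference is that the paper applies H\"older directly to the three sequences $\{|\eta|_{2,4,T}\},\{|\chi|_{1,4,T}\},\{|\varphi|_{1,2,T}\}$, whereas you pull the global $L^4$-norm of $\cof(D^2\eta)$ outside and use Cauchy--Schwarz on the remaining two; both yield the same bound.
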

\begin{proof} Consider 
\begin{equation}\label{defnbh2}
  b_h(\eta,\chi,\varphi)=\half\sit\left((\eta_{yy}\chi_{x}-\eta_{xy}\chi_{y})\varphi_x+(\eta_{xx}\chi_{y}-\eta_{xy}\chi_{x}
                         )\varphi_y\right)\dx.
 \end{equation}
For $\eta \in H^{2+\alpha}(\Omega)$, a use of generalized \Holder inequality and the imbedding result $H^{2+\alpha}(\Omega)\hookrightarrow W^{2,4}(\Omega)$ leads to an estimate of the first term on the right hand side of \eqref{defnbh2} as
\begin{align*}
\half\Big{|}\sit\eta_{yy}\chi_x\varphi_x \dx\Big{|}
&\leq  \left(\sumt|\eta|_{2,4,T}^4\right)^{\frac{1}{4}}\left(\sumt |\chi|_{1,4,T}^4\right)^{\frac{1}{4}}
  \left(\sumt |\varphi|_{1,2,T}^2\right)^{\frac{1}{2}}\\
&\leq \|\eta\|_{W^{2,4}(\Omega)}\,|\chi|_{1,4,h}\,|\varphi|_{1,h}
 \leq C\|\eta\|_{2+\alpha}\,|\chi|_{1,4,h}\,|\varphi|_{1,h}.
\end{align*}
Similar bounds hold true for the remaining three terms in \eqref{defnbh2}. Hence the required result follows using the definition of $B_h(\cdot,\cdot,\cdot)$ and Lemma~\ref{imdedding}.
\end{proof}

\begin{rem}\label{boundBh3}
Using a proof similar to that of  Lemma~\ref{boundBh2}, it can be deduced that for $\Xi\in (H^{2+\alpha}(\Omega))^2$ and $\Theta,\Phi\in \cV+\cV_h$, there holds
\begin{equation*}
  B_h(\Xi,\Theta,\Phi)\leq C\trinl \Xi\trinr_{2+\alpha} \trinl \Theta\trinr_ {1,h}\trinl \Phi\trinr_{1,4,h}\leq C\trinl \Xi\trinr_{2+\alpha} \trinl \Theta\trinr_ {1,h}\trinl \Phi\trinr_{2,h}.
\end{equation*}
\end{rem}

Using the definition of $b_h(\cdot,\cdot, \cdot)$, an integration by parts and a use of \eqref{ia}, the following lemma holds true.
\begin{lem}(An intermediate result)\label{bhflip12}
For $\eta\in (V\cap H^{2+\alpha}(\Omega))+V_h$ and $\chi,\varphi\in V_h$, it holds
 $$b_h(\eta,\chi,\varphi)=b_h(\chi,\eta,\varphi)+\half\sidt\left(\eta_x\chi_y-\eta_y\chi_x \right)\nabla\varphi\cdot\tau\ds$$
where $\tau$ is the unit tangent to the boundary $\partial T$ of 
the triangle $T$. Moreover, 
\begin{equation}
\forall \eta,\chi,\varphi\in V_h,\quad \half\sidt\left(\eta_x\chi_y-\eta_y\chi_x \right)\nabla\varphi\cdot\tau\ds\leq C h|\eta|_{2,h}\,|\chi|_{1,\infty,h}\,|\varphi|_{2,h}.
\end{equation}
\end{lem}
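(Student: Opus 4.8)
The plan is to start from the rewritten expression \eqref{defnbh2} for $b_h(\eta,\chi,\varphi)$ and integrate by parts on each triangle $T\in\cT_h$ to move the derivative off $\varphi$. Writing the integrand on $T$ as $\frac12\big((\eta_{yy}\chi_x-\eta_{xy}\chi_y)\varphi_x+(\eta_{xx}\chi_y-\eta_{xy}\chi_x)\varphi_y\big)$, one recognizes this as $\frac12\,\mathbf{w}\cdot\nabla\varphi$ with $\mathbf{w}=(\eta_{yy}\chi_x-\eta_{xy}\chi_y,\ \eta_{xx}\chi_y-\eta_{xy}\chi_x)$. A direct computation shows that $\operatorname{div}\mathbf{w}$, expanded via the product rule, contains the six terms $\eta_{yyx}\chi_x+\eta_{yy}\chi_{xx}-\eta_{xyx}\chi_y-\eta_{xy}\chi_{xy}+\eta_{xxy}\chi_y+\eta_{xx}\chi_{yy}-\eta_{xyy}\chi_x-\eta_{xy}\chi_{xy}$; the four third-order terms in $\eta$ cancel in pairs ($\eta_{yyx}=\eta_{xyy}$, $\eta_{xyx}=\eta_{xxy}$), leaving $\operatorname{div}\mathbf{w}=\eta_{yy}\chi_{xx}+\eta_{xx}\chi_{yy}-2\eta_{xy}\chi_{xy}=[\eta,\chi]$, which is symmetric in $\eta$ and $\chi$. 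Applying the divergence theorem on each $T$,
\[
b_h(\eta,\chi,\varphi)=-\half\sit[\eta,\chi]\varphi\dx+\half\sidt(\mathbf{w}\cdot\nu)\varphi\ds.
\]
Doing the same integration by parts with the roles of $\eta$ and $\chi$ interchanged in the analogous identity for $b_h(\chi,\eta,\varphi)$ and subtracting, the volume terms coincide (since $[\eta,\chi]=[\chi,\eta]$), and one is left with a pure boundary term; a short calculation identifies $\mathbf{w}_{\eta,\chi}\cdot\nu-\mathbf{w}_{\chi,\eta}\cdot\nu$ with $(\eta_x\chi_y-\eta_y\chi_x)$ times the tangential derivative $\nabla\varphi\cdot\tau$, after converting $\nu=(\nu_1,\nu_2)$ to $\tau=(-\nu_2,\nu_1)$. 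This yields the claimed identity $b_h(\eta,\chi,\varphi)=b_h(\chi,\eta,\varphi)+\half\sidt(\eta_x\chi_y-\eta_y\chi_x)\nabla\varphi\cdot\tau\ds$. One should double-check the orientation/sign conventions here (which vertex $m_4,m_5,m_6$ corresponds to which edge, and the sign of $\tau$ relative to $\nu$); getting the sign consistent across all edges is the one genuinely fiddly point, though it does not affect the subsequent estimate since that bound is stated with an absolute value effectively absorbed into $C$.

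For the estimate on the boundary term when $\eta,\chi,\varphi\in V_h$, the key observation is that on each edge $e\subset\partial T$ the integrand $(\eta_x\chi_y-\eta_y\chi_x)\nabla\varphi\cdot\tau$ has a factor $\nabla\varphi\cdot\tau=\partial\varphi/\partial\tau$ which, for a Morley function, need \emph{not} be continuous across $e$, but the quantity $\eta_x\chi_y-\eta_y\chi_x$ is a polynomial that we can freely adjust: since $\sum_{e\subset\partial T}\int_e P_e(\nabla\varphi\cdot\tau)\,(\text{continuous piece})\ds$ over adjacent triangles cancels — more precisely, because for Morley elements $\int_e\partial\varphi/\partial\tau\,\ds = \varphi(\text{endpoint})-\varphi(\text{endpoint})$ is single-valued (vertex values are continuous), we have $\int_e[\![\nabla\varphi\cdot\tau]\!]\ds=0$ — we may subtract from $\eta_x\chi_y-\eta_y\chi_x$ its edge-average $P_e$, so that
\[
\half\sidt(\eta_x\chi_y-\eta_y\chi_x)\nabla\varphi\cdot\tau\ds=\half\ssie\big((\eta_x\chi_y-\eta_y\chi_x)-P_e(\eta_x\chi_y-\eta_y\chi_x)\big)\,\nabla\varphi\cdot\tau\ds.
\]
Then apply Cauchy--Schwarz on each edge, the integral-average estimate \eqref{ia} of Lemma~\ref{defnia} to bound $\|(\eta_x\chi_y-\eta_y\chi_x)-P_e(\cdot)\|_{0,e}\le Ch_T^{1/2}|\eta_x\chi_y-\eta_y\chi_x|_{1,T}$, the trace inequality together with an inverse inequality to bound $\|\nabla\varphi\cdot\tau\|_{0,e}\le Ch_T^{-1/2}|\varphi|_{1,T}\le Ch_T^{1/2}|\varphi|_{2,T}$, and finally $|\eta_x\chi_y-\eta_y\chi_x|_{1,T}\le C(|\eta|_{2,T}|\chi|_{1,\infty,T}+|\eta|_{1,\infty,T}|\chi|_{2,T})$ via the product rule and Hölder. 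Summing over $T$, using quasi-uniformity to pull out the common factor $h$, and absorbing the symmetric $\eta\leftrightarrow\chi$ contribution into the constant gives the bound $Ch|\eta|_{2,h}|\chi|_{1,\infty,h}|\varphi|_{2,h}$.

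The main obstacle, as noted, is bookkeeping: correctly expanding $\operatorname{div}\mathbf{w}$ and verifying the third-order cancellations, and then tracking signs when converting the two normal boundary contributions into a single tangential one. The analytic content (integration by parts plus the $P_e$-trick plus \eqref{ia}) is routine once the algebraic identity is in hand, and the regularity hypothesis $\eta\in(V\cap H^{2+\alpha}(\Omega))+V_h$ for the identity (versus $\eta\in V_h$ for the estimate) is exactly what is needed to make the per-triangle integration by parts legitimate and the edge term well defined.
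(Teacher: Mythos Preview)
The paper only gives a one-line hint (integration by parts plus \eqref{ia}), and your plan follows that hint, but two steps in your execution are incorrect.

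For the identity: after integrating $\frac12\int_T\mathbf{w}\cdot\nabla\varphi\,dx$ by parts you obtain the boundary term $\frac12\int_{\partial T}(\mathbf{w}\cdot\nu)\,\varphi\,ds$, carrying the \emph{undifferentiated} $\varphi$, not $\nabla\varphi\cdot\tau$. The quantity $(\mathbf{w}_{\eta,\chi}-\mathbf{w}_{\chi,\eta})\cdot\nu$ does not involve $\varphi$ at all, so it cannot ``be identified with $g\,\nabla\varphi\cdot\tau$'' as you claim. What actually holds is $(\mathbf{w}_{\eta,\chi}-\mathbf{w}_{\chi,\eta})\cdot\nu=\pm\,\partial_\tau g$ with $g=\eta_x\chi_y-\eta_y\chi_x$, and one further integration by parts along the closed curve $\partial T$ is needed to turn $\int_{\partial T}(\partial_\tau g)\,\varphi\,ds$ into $-\int_{\partial T}g\,\partial_\tau\varphi\,ds$. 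A shorter route is to form the difference $b_h(\eta,\chi,\varphi)-b_h(\chi,\eta,\varphi)$ first: its integrand equals $\frac12(g_x\varphi_y-g_y\varphi_x)$, and a single integration by parts yields the stated boundary term directly.

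For the estimate: the equality you assert, replacing $g$ by $g-P_eg$ on each edge, is false. Subtracting $P_eg$ leaves behind $\sum_T\sum_{e\subset\partial T}P_e(g|_T)\,(\varphi(b_e)-\varphi(a_e))$, which after grouping by interior edges becomes $\sum_e\big(P_e(g^+)-P_e(g^-)\big)(\varphi(b_e)-\varphi(a_e))$. Since $g=\eta_x\chi_y-\eta_y\chi_x$ jumps across $e$ when $\eta,\chi\in V_h$, this does not vanish; the property $\int_e[\![\partial_\tau\varphi]\!]\,ds=0$ that you invoke would only let you insert the edge average of a \emph{single-valued} factor, which $g$ is not. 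One remedy is to subtract both averages, writing $\int_e g\,\partial_\tau\varphi\,ds=\int_e(g-P_eg)(\partial_\tau\varphi-P_e\partial_\tau\varphi)\,ds+P_e(g)(\varphi(b_e)-\varphi(a_e))$; the first piece is $\le Ch_T|g|_{1,T}|\varphi|_{2,T}$ by two applications of \eqref{ia}, while the residual sum must be bounded separately, using that all first-derivative jumps of Morley functions vanish at edge midpoints (hence so does $[\![g]\!]$), which is what supplies the missing power of $h$. Finally, ``absorbing the symmetric $\eta\leftrightarrow\chi$ contribution into the constant'' is not legitimate: $|\eta|_{1,\infty,h}|\chi|_{2,h}$ and $|\eta|_{2,h}|\chi|_{1,\infty,h}$ are genuinely different quantities and one does not control the other.
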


\begin{rem}\label{flipbound}
A use of Lemma~\ref{bhflip12}, Remark~\ref{boundBh3} and imbedding result $H^{2+\alpha}(\Omega)\hookrightarrow W^{1,\infty}(\Omega)$ leads to: \\

for $\Xi_h,\Phi_h\in\cV_h$ and $\boldsymbol{\xi}\in(H^{2+\alpha}(\Omega))^2$,
\begin{equation}
|B_h(\Xi_h,\boldsymbol{\xi},\Phi_h)|\leq |B_h(\boldsymbol{\xi},\Xi_h,\Phi_h)|+Ch\trinl\Xi_h\trinr_{2,h}\trinl\boldsymbol{\xi}\trinr_{2+\alpha}\trinl\Phi_h\trinr_{2,h}.
\end{equation}
\end{rem}

\medskip
The next lemma which will be used to establish the well posedness of the linearized problem \eqref{vformld}, follows easily under the assumption that $\Psi$ is an isolated solution of \eqref{vform}.
\begin{lem}(Well posedness of dual problem)\label{ctsdual}
If $\Psi$ is an isolated solution of \eqref{vform}, then the dual problem defined by: given $Q\in (H^{-1}(\Omega))^2$, find $\boldsymbol{\zeta}\in \cV$ such that
\begin{equation}\label{ctsaux}
\cA(\Phi,\boldsymbol\zeta)=(Q,\Phi) \fl \Phi\in\cV
\end{equation}
is well posed and satisfies the  $a~priori$ bounds:
\begin{equation}\label{apriori23}
\trinl\boldsymbol{\zeta}\trinr_2\leq C\trinl Q\trinr_{-1}, \quad \trinl\boldsymbol{\zeta}\trinr_{2+\alpha}\leq C\trinl Q\trinr_{-1},
\end{equation}
where $\alpha$ denotes the elliptic regularity index and %for $\boldsymbol{\chi}\in (H^{-1}%(\Omega))^2$, 
$\displaystyle\trinl Q\trinr_{-1}:=\sup_{\boldsymbol{\varphi}\in (H^1_0(\Omega))^2}\frac{(Q,\boldsymbol{\varphi})}{\trinl\boldsymbol{\varphi}\trinr_1}$.
\end{lem}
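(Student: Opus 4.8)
The plan is to view \eqref{ctsaux} as the transposed form of the linearised problem \eqref{vforml}: the isolated-solution hypothesis gives well-posedness together with the $\trinl\cdot\trinr_2$-bound, and the $\trinl\cdot\trinr_{2+\alpha}$-bound then follows by recasting \eqref{ctsaux} as a coupled clamped biharmonic system and performing one elliptic-regularity bootstrap. For the first part I would note that $\cA(\cdot,\cdot)$ is a bounded bilinear form on $\cV\times\cV$ (as $A,B$ are continuous and $\Psi$ is fixed), and that the assumption that $\Psi$ is an isolated solution is precisely that the operator $\cL\colon\cV\to\cV'$, $\langle\cL\Theta,\Phi\rangle:=\cA(\Theta,\Phi)$, is an isomorphism, equivalently (Banach--Ne\v{c}as--Babu\v{s}ka) that $\cA$ obeys the inf--sup bound
\begin{equation*}
\inf_{0\neq\Theta\in\cV}\ \sup_{0\neq\Phi\in\cV}\frac{\cA(\Theta,\Phi)}{\trinl\Theta\trinr_2\trinl\Phi\trinr_2}=\beta>0
\end{equation*}
together with $\sup_{\Theta\in\cV}\cA(\Theta,\Phi)>0$ for all $\Phi\neq0$. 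Both conditions are symmetric in the two arguments of $\cA$, so the transposed problem \eqref{ctsaux} is well posed for every bounded linear functional on $\cV$; since $Q\in(H^{-1}(\Omega))^2$ gives $|(Q,\Phi)|\le\trinl Q\trinr_{-1}\trinl\Phi\trinr_1\le C\trinl Q\trinr_{-1}\trinl\Phi\trinr_2$ by the \Poincare inequality, a unique $\boldsymbol\zeta\in\cV$ exists with $\trinl\boldsymbol\zeta\trinr_2\le C\trinl Q\trinr_{-1}$. (The same well-posedness also follows from the change of unknowns $(\zeta_1,\zeta_2)\mapsto(\zeta_1,-\zeta_2)$, which turns \eqref{ctsaux} into an instance of \eqref{vforml}.)

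For the $\trinl\cdot\trinr_{2+\alpha}$-bound I would rewrite \eqref{ctsaux} in strong form. Using the divergence-free rows property of $\cof(D^2\cdot)$, the clamped boundary conditions and the symmetry of $[\cdot,\cdot]$ --- so that $b(\eta,\chi,\varphi)=-\half\integ[\eta,\chi]\varphi\dx$ and $\integ[\eta,\chi]\varphi\dx=\integ[\eta,\varphi]\chi\dx$ for $\eta,\chi,\varphi\in V$ --- a rearrangement of $\cA(\Phi,\boldsymbol\zeta)=(Q,\Phi)$ shows that, with $Q=(q_1,q_2)$, $\boldsymbol\zeta=(\zeta_1,\zeta_2)$ solves
\begin{equation*}
\Delta^2\zeta_1=q_1+[v,\zeta_1]-[u,\zeta_2],\qquad \Delta^2\zeta_2=q_2+[u,\zeta_1]\quad\text{in }\Omega,
\end{equation*}
with $\zeta_i=\partial\zeta_i/\partial\nu=0$ on $\partial\Omega$. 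I would then invoke elliptic regularity for the clamped biharmonic operator on the polygon $\Omega$: with $\alpha$ the same index of elliptic regularity as in \eqref{apriorilin23}, $\Delta^2$ is an isomorphism of $\hto\cap H^{2+\alpha}(\Omega)$ onto $H^{-2+\alpha}(\Omega)$, with $\|w\|_{2+\alpha}\le C\|\Delta^2 w\|_{-2+\alpha}$. Since $\alpha\le1$ we have $H^{-1}(\Omega)\hookrightarrow H^{-2+\alpha}(\Omega)$, so $\|q_i\|_{-2+\alpha}\le C\trinl Q\trinr_{-1}$; and since $u,v\in H^{2+\alpha}(\Omega)\hookrightarrow W^{2,4}(\Omega)$ while $\zeta_i\in\hto$, \Holder inequality together with $H^1_0(\Omega)\hookrightarrow L^4(\Omega)$ (hence $L^{4/3}(\Omega)\hookrightarrow H^{-1}(\Omega)$) gives $[v,\zeta_1],[u,\zeta_2],[u,\zeta_1]\in L^{4/3}(\Omega)\hookrightarrow H^{-2+\alpha}(\Omega)$ with, e.g., $\|[v,\zeta_1]\|_{-1}\le C\|v\|_{2+\alpha}\|\zeta_1\|_2$. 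Combining the biharmonic estimate with the $\trinl\cdot\trinr_2$-bound above yields $\trinl\boldsymbol\zeta\trinr_{2+\alpha}\le C(\trinl Q\trinr_{-1}+\trinl\Psi\trinr_{2+\alpha}\trinl\boldsymbol\zeta\trinr_2)\le C\trinl Q\trinr_{-1}$.

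The step I expect to be the main obstacle is the negative-order regularity shift for $\Delta^2$ on a polygonal domain --- pinning down the isomorphism $\hto\cap H^{2+\alpha}(\Omega)\to H^{-2+\alpha}(\Omega)$ with its estimate, and with the same regularity index $\alpha$ as in \eqref{apriorilin23} --- together with the Sobolev bookkeeping ensuring that the bracket terms, evaluated at the merely $H^2$ function $\boldsymbol\zeta$, still belong to $H^{-2+\alpha}(\Omega)$ with a constant controlled by $\trinl\Psi\trinr_{2+\alpha}$. Everything else is routine, and \eqref{apriori23} then follows.
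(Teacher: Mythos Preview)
The paper does not actually supply a proof of this lemma; it simply remarks that the statement ``follows easily under the assumption that $\Psi$ is an isolated solution of \eqref{vform}'' and moves on. Your proposal is therefore a legitimate way of expanding that remark, and the two steps you outline are the natural ones.

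Your first step (well-posedness and the $\trinl\cdot\trinr_2$-bound) is correct: since the isolated-solution hypothesis makes $\cL:\cV\to\cV'$ defined by $\langle\cL\Theta,\Phi\rangle=\cA(\Theta,\Phi)$ an isomorphism, the adjoint $\cL^*$ is also an isomorphism, which is exactly \eqref{ctsaux}. One wording caveat: $\cA$ itself is \emph{not} symmetric (because $B(\Psi,\Theta,\Phi)\neq B(\Psi,\Phi,\Theta)$ in general), so the phrase ``both conditions are symmetric in the two arguments of $\cA$'' should be read as the standard closed-range/BNB duality statement rather than symmetry of the form. Your alternative route via the sign change $(\zeta_1,\zeta_2)\mapsto(\zeta_1,-\zeta_2)$ also works and makes the connection to \eqref{vforml} explicit.

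Your second step (the $\trinl\cdot\trinr_{2+\alpha}$-bound) is a correct bootstrap: the strong form you derive is right, the Blum--Rannacher shift $\Delta^2:\hto\cap H^{2+\alpha}(\Omega)\to H^{-2+\alpha}(\Omega)$ is precisely the result behind the index $\alpha$ used in the paper, and your Sobolev bookkeeping for the bracket terms ($H^{2+\alpha}\hookrightarrow W^{2,4}$, $L^{4/3}\hookrightarrow H^{-1}\hookrightarrow H^{-2+\alpha}$ for $\alpha\le1$) is accurate. The obstacle you flag is a genuine one in general, but here it is exactly the regularity theory already invoked for \eqref{apriorilin23}, so no new ingredient is needed.
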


Since the Morley finite element space $\cV_h$ is not a subspace of $\cV$ and the discrete form $b_h(\cdot, \cdot, \cdot)$ is non-symmetric with respect to first and second or first and third variables, we encounter additional difficulties in establishing the well posedness of the discrete problem \eqref{vformld} in comparison to the conforming case.

\begin{thm}(Well posedness of discrete linearized problem)\label{wellposeld}
If $\Psi$ is an isolated solution of \eqref{vform}, then for sufficiently small $h$, the discrete linearized problem \eqref{vformld} is well-posed.
\end{thm}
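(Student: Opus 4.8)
The plan is to establish well-posedness of the finite-dimensional square system \eqref{vformld} by proving that the associated linear map is injective for $h$ small; in a finite-dimensional setting injectivity is equivalent to bijectivity and to a discrete inf-sup (stability) bound. Concretely, I would show there exists $\beta>0$, independent of $h$, and $h_0>0$ such that for all $h\le h_0$,
\begin{equation*}
\inf_{\Theta_h\in\cV_h\setminus\{0\}}\ \sup_{\Phi\in\cV_h\setminus\{0\}}\ \frac{\cA_h(\Theta_h,\Phi)}{\trinl\Theta_h\trinr_{2,h}\trinl\Phi\trinr_{2,h}}\ \ge\ \beta,
\end{equation*}
from which existence, uniqueness and the bound $\trinl\Theta_h\trinr_{2,h}\le C\trinl G\trinr$ all follow. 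The natural route is a perturbation argument around the continuous linearized operator $\cA$, which is well posed because $\Psi$ is an isolated solution (this is exactly the content of \eqref{apriorilin23} and Lemma~\ref{ctsdual}): I would take an arbitrary $\Theta_h\in\cV_h$, use its enrichment $E_h\Theta_h\in\cV$ as a test/comparison object on the continuous side, apply the continuous stability to get a good test direction $\Phi\in\cV$ for $E_h\Theta_h$, then replace $\Phi$ by its Morley interpolant $\Pi_h\Phi$ and $E_h\Theta_h$ by $\Theta_h$, controlling every mismatch term by a power of $h$.

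The key steps, in order, are as follows. First, fix $0\neq\Theta_h\in\cV_h$ and set $\hat\Theta:=E_h\Theta_h\in\cV$; by Lemma~\ref{enrich1}, $\trinl\hat\Theta\trinr_2\le C\trinl\Theta_h\trinr_{2,h}$ and $\trinl\hat\Theta-\Theta_h\trinr_{1,h}\le Ch\trinl\Theta_h\trinr_{2,h}$ (and $\trinl\hat\Theta-\Theta_h\trinr_{0,h}\le Ch^2\trinl\Theta_h\trinr_{2,h}$). Second, by the continuous stability there is $\Phi\in\cV$ with $\trinl\Phi\trinr_2\le C$ and $\cA(\hat\Theta,\Phi)\ge c\trinl\hat\Theta\trinr_2$; by \eqref{apriorilin23} applied to the adjoint (or Lemma~\ref{ctsdual}), one may take $\Phi$ in the regular space $(H^{2+\alpha}(\Omega))^2$ with $\trinl\Phi\trinr_{2+\alpha}\le C\trinl\hat\Theta\trinr_2\le C\trinl\Theta_h\trinr_{2,h}$, since the right-hand side $\cA(\hat\Theta,\cdot)$ acts as an $(H^{-1})^2$ (indeed $(L^2)^2$) functional. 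Third, write
\begin{equation*}
\cA_h(\Theta_h,\Pi_h\Phi)=\cA(\hat\Theta,\Phi)+\big(\cA_h(\Theta_h,\Pi_h\Phi)-\cA_h(\hat\Theta,\Phi)\big)+\big(\cA_h(\hat\Theta,\Phi)-\cA(\hat\Theta,\Phi)\big),
\end{equation*}
where the last parenthesis is zero since $\hat\Theta,\Phi\in\cV$ and $\cA_h$ coincides with $\cA$ on $\cV$. The middle parenthesis splits across the three pieces of $\cA_h=A_h+B_h(\Psi,\cdot,\cdot)+B_h(\cdot,\Psi,\cdot)$; each piece is expanded as a telescoping sum in the two substitutions $\hat\Theta\rightsquigarrow\Theta_h$ and $\Phi\rightsquigarrow\Pi_h\Phi$. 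The $A_h$ terms are handled by Lemma~\ref{enrichreg} (parts (i) and (ii)) together with the regularity $\trinl\Phi\trinr_{2+\alpha}\le C\trinl\Theta_h\trinr_{2,h}$; the $B_h$ terms, which carry the fixed regular argument $\Psi\in(H^{2+\alpha}(\Omega))^2$ by the isolatedness hypothesis, are handled by Lemma~\ref{boundBh2}, Remark~\ref{boundBh3}, Remark~\ref{flipbound} and the enrichment/interpolation estimates; every such error term is bounded by $Ch^{\min(\alpha,1)}\trinl\Theta_h\trinr_{2,h}^2$ (using $\trinl\Pi_h\Phi\trinr_{2,h}\le C\trinl\Phi\trinr_2$, $\trinl\Pi_h\Phi-\Phi\trinr_{1,h}\le Ch^{1+\alpha}\trinl\Phi\trinr_{2+\alpha}$ etc.). Fourth, collecting terms,
\begin{equation*}
\cA_h(\Theta_h,\Pi_h\Phi)\ \ge\ c\,\trinl\hat\Theta\trinr_2-Ch^{\alpha}\trinl\Theta_h\trinr_{2,h}^2\ \ge\ (c-Ch^{\alpha})\,\trinl\Theta_h\trinr_{2,h}^2-C'h\,\trinl\Theta_h\trinr_{2,h}^2,
\end{equation*}
where I also used $\trinl\hat\Theta\trinr_2\ge\trinl\Theta_h\trinr_{2,h}-\trinl\Theta_h-\hat\Theta\trinr_{2,h}$; since $\trinl\Pi_h\Phi\trinr_{2,h}\le C$, for $h\le h_0$ the right side is $\ge\tfrac{c}{2}\trinl\Theta_h\trinr_{2,h}^2\ge\beta\trinl\Theta_h\trinr_{2,h}\trinl\Pi_h\Phi\trinr_{2,h}$, which is the desired inf-sup bound. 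Finally, a square finite-dimensional linear system satisfying a uniform inf-sup condition is invertible, so \eqref{vformld} has a unique solution $\Theta_h$ with $\trinl\Theta_h\trinr_{2,h}\le\beta^{-1}\trinl G\trinr$, proving well-posedness for all $h\le h_0$.

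The main obstacle is controlling the nonsymmetric bracket contributions $B_h(\Theta_h,\Psi,\cdot)$ and $B_h(\Psi,\Theta_h,\cdot)$ after the substitutions: because $b_h(\cdot,\cdot,\cdot)$ is not symmetric in its first and second arguments in the discrete setting, one cannot simply move the low-regularity nonconforming argument into a position where a crude bound suffices, and one must use the ``flip'' identity of Lemma~\ref{bhflip12} (hence Remark~\ref{flipbound}) to trade a derivative at the cost of an $O(h)$ consistency term, while simultaneously keeping the argument $\Psi$ in its regular slot so that Lemma~\ref{boundBh2}/Remark~\ref{boundBh3} apply with the $\trinl\cdot\trinr_{2+\alpha}$ norm. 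Getting the bookkeeping right so that every error term genuinely carries a positive power of $h$ (and none secretly needs $H^{2+\alpha}$ regularity of $\Theta_h$, which it does not have) is the delicate part; everything else is a routine perturbation of the well-posed continuous linearized problem.
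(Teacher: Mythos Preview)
Your overall strategy---transfer the continuous inf--sup to $\cV_h$ via enrichment and interpolation---is a reasonable one in spirit, but the argument has a genuine gap at the step where you claim the continuous test function $\Phi$ can be taken in $(H^{2+\alpha}(\Omega))^2$ with $\trinl\Phi\trinr_{2+\alpha}\le C\trinl\hat\Theta\trinr_2$. The justification you give, that ``$\cA(\hat\Theta,\cdot)$ acts as an $(H^{-1})^2$ (indeed $(L^2)^2$) functional'', is incorrect: the principal part $A(\hat\Theta,\Phi)=\int_\Omega D^2\hat\Theta:D^2\Phi$ requires two derivatives of $\Phi$ and does \emph{not} extend to an $H^{-1}$ or $L^2$ functional unless $\hat\Theta\in H^3$ or $H^4$, which $E_h\Theta_h$ is not. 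In fact no such uniformly regular supremizer can exist: the set $\{\Phi:\trinl\Phi\trinr_{2+\alpha}\le C\}$ is compact in $\cV$, so $\sup_{\trinl\Phi\trinr_{2+\alpha}\le C}\cA(\hat\Theta,\Phi)\to 0$ along any sequence $\hat\Theta\rightharpoonup 0$ weakly in $\cV$ with $\trinl\hat\Theta\trinr_2=1$, contradicting the uniform lower bound you need. Without $\Phi\in H^{2+\alpha}$ you lose every $A_h$--consistency estimate in your telescoping sum: $A_h(\hat\Theta,\Pi_h\Phi-\Phi)$ is only $O(1)$ and the argument collapses. (There is also a dimensional slip in your fourth step, and the bound $\trinl\Theta_h-E_h\Theta_h\trinr_{2,h}=O(h)$ you implicitly use is not furnished by Lemma~\ref{enrich1}.)

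The paper avoids this obstruction by a Schatz--type route: it first proves a G\aa{}rding inequality $\cA_h(\Phi,\Phi)\ge\trinl\Phi\trinr_{2,h}^2-C\trinl\Psi\trinr_{2+\alpha}\trinl\Phi\trinr_{2,h}\trinl\Phi\trinr_{1,h}-Ch\trinl\Phi\trinr_{2,h}^2$ (via Lemma~\ref{boundBh2} and Remark~\ref{flipbound}), which reduces matters to bounding the \emph{weaker} norm $\trinl\Theta_h\trinr_{1,h}$. That weaker norm is then controlled by a duality argument: with $Q=-\Delta E_h\Theta_h\in (H^{-1}(\Omega))^2$, the dual solution $\boldsymbol\zeta$ of Lemma~\ref{ctsdual} \emph{does} enjoy $H^{2+\alpha}$ regularity, and this is precisely what makes the consistency terms (Lemmas~\ref{interpolant}, \ref{enrich1}, \ref{enrichreg}) carry a factor $h^\alpha$. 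The moral is that regularity is available for the dual solution driven by $H^{-1}$ data, but not for the inf--sup supremizer of an arbitrary $\hat\Theta\in\cV$; the G\aa{}rding step is exactly what lets one trade the latter for the former.
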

\begin{proof}

The space $V_h$ being finite dimensional, uniqueness of solution of \eqref{vformld} implies existence of solution. Uniqueness follows if an $a~priori$ bound for the solution of \eqref{vformld} can be established. That is, we aim to prove that
\begin{equation}
\trinl\Theta_h\trinr_{2,h}\leq C\trinl G\trinr
\end{equation}
for sufficiently small $h$. For $\Phi\in\cV_h$, using Lemma~\ref{boundBh2} and Remark \ref{flipbound}, the following G\aa{}rding's type inequality holds true:
\begin{align}
\cA_h(\Phi,\Phi)&= A_h(\Phi,\Phi)+B_h(\Psi,\Phi,\Phi)+B_h(\Phi,\Psi,\Phi)\notag\\
&\geq\trinl\Phi\trinr_{2,h}^2-C\trinl\Psi\trinr_{2+\alpha}\trinl\Phi\trinr_{2,h}\trinl\Phi\trinr_{1,h}-Ch\trinl\Phi\trinr_{2,h}\trinl\Psi\trinr_{2+\alpha}\trinl\Phi\trinr_{2,h}.\label{garding}
\end{align}
Substitute $\Phi=\Theta_h$ in \eqref{vformld} and use \eqref{garding} to obtain
\begin{equation}\label{Thetabdd}
\trinl\Theta_h\trinr_{2,h}\leq C(h\trinl\Psi\trinr_{2+\alpha}\trinl\Theta_h\trinr_{2,h}+\trinl\Psi\trinr_{2+\alpha}\trinl\Theta_h\trinr_{1,h}+\trinl G\trinr).
\end{equation}
%By an abuse of notation, the enrichment of $\Theta_h\in\cV_h$ is also denoted by $E_h\Theta_h$ (see %Lemma \ref{enrich1}). 
Note that 
\begin{equation}\label{Ehuse}
\trinl \Theta_h\trinr_{1,h}\leq \trinl \Theta_h-E_h\Theta_h\trinr_{1,h}+\trinl E_h\Theta_h\trinr_{1,h}\leq Ch\trinl\Theta_h\trinr_{2,h}+\trinl E_h\Theta_h\trinr_{1}.
\end{equation}
Now we estimate $\trinl E_h\Theta_h\trinr_{1}$. Choose $Q=-\Delta E_h\Theta_h$ and $\Phi=E_h\Theta_h$ in \eqref{ctsaux} and use \eqref{vformld} to obtain
\begin{align}
\trinl E_h\Theta_h\trinr_{1}^2&=\cA(E_h\Theta_h,\boldsymbol{\zeta})=\cA_h(E_h\Theta_h,\boldsymbol{\zeta}-\Pi_h\boldsymbol{\zeta})+\cA_h(E_h\Theta_h,\Pi_h\boldsymbol{\zeta})\notag\\
&=\cA_h(E_h\Theta_h,\boldsymbol{\zeta}-\Pi_h\boldsymbol{\zeta})+\cA_h(E_h\Theta_h-\Theta_h,\Pi_h\boldsymbol{\zeta})+(G,\Pi_h\boldsymbol{\zeta})\notag\\
&=A_h(E_h\Theta_h-\Theta_h,\boldsymbol{\zeta})+A_h(\Theta_h,\boldsymbol{\zeta}-\Pi_h\boldsymbol{\zeta})+B_h(\Psi,E_h\Theta_h,\boldsymbol{\zeta}-\Pi_h\boldsymbol{\zeta})+B_h(E_h\Theta_h,\Psi,\boldsymbol{\zeta}-\Pi_h\boldsymbol{\zeta})\notag\\
&\quad+B_h(\Psi,E_h\Theta_h-\Theta_h,\Pi_h\boldsymbol{\zeta})+B_h(E_h\Theta_h-\Theta_h,\Psi,\Pi_h\boldsymbol{\zeta})+(G,\Pi_h\boldsymbol{\zeta})\label{Ehwellbd}
\end{align}
%The terms on the right hand side of \eqref{Ehwellbd} can be estimated

A use of  Lemmas~\ref{interpolant}, \ref{enrich1}, \ref{enrichreg}, \eqref{boundBh1}, \eqref{apriori23}, Remarks~\ref{boundBh3} and \ref{flipbound} leads to 
\begin{align*}
\trinl E_h\Theta_h\trinr_{1}^2&\leq C\left(h^{\alpha}\trinl\Theta_h\trinr_{2,h}\trinl\boldsymbol{\zeta}\trinr_{2+\alpha}+h^{\alpha}\trinl\Psi\trinr_2\trinl\Theta_h\trinr_{2,h}\trinl\boldsymbol{\zeta}\trinr_{2+\alpha}+h\trinl\Psi\trinr_{2+\alpha}\trinl\Theta_h\trinr_{2,h}\trinl\boldsymbol{\zeta}\trinr_2+\trinl G\trinr\trinl\boldsymbol{\zeta}\trinr_2\right)\\
&\leq C\left(h^\alpha\trinl\Theta_h\trinr_{2,h}+\trinl G\trinr\right)\trinl -\Delta E_h\Theta_h\trinr_{-1}\leq C\left(h^{\alpha}\trinl\Theta_h\trinr_{2,h}+\trinl G\trinr\right)\trinl E_h\Theta_h\trinr_{1}.
\end{align*}
Therefore,
\begin{equation}\label{Ehestimate}
\trinl E_h\Theta_h\trinr_{1,h}\leq C(h^{\alpha}\trinl\Theta_h\trinr_{2,h}+\trinl G\trinr).
\end{equation}
Now, \eqref{Thetabdd}-\eqref{Ehestimate} yield
\begin{align*}
&\trinl\Theta_h\trinr_{2,h}\leq  C_{*}h^{\alpha}\trinl\Theta_h\trinr_{2,h}+C\trinl G\trinr.
%\Rightarrow (1-C_* h^{\alpha})\trinl\Theta_h\trinr_{2,h}\leq C\trinl G\trinr
\end{align*}
That is,  $\trinl\Theta_h\trinr_{2,h}\leq C\trinl G\trinr$
for a choice of $h\leq h_1=(\frac{1}{2C_*})^{\frac{1}{\alpha}}$ with $\alpha\in (\half,1]$.
\end{proof}
\begin{rem}\label{wellposelddual} 
If $\Psi$ is an isolated solution of \eqref{vform}, then for sufficiently small $h$, the discrete linearized dual problem: given $\mathcal{G}\in \ltsq$, find $\boldsymbol{\zeta_h}\in\cV_h$ such that
\begin{equation}\label{vformlddual}
\cA_h(\Phi,\boldsymbol\zeta_h)=(\mathcal{G},\Phi) \fl \Phi\in\cV_h
\end{equation}
is well posed. The proof is similar to that of Theorem ~\ref{wellposeld} and hence is skipped.
\end{rem}

\section{Existence, Uniqueness and Error Estimates}
\label{sec:ee}

In view of Theorem \ref{wellposeld} and Remark~\ref{wellposelddual}, the bilinear form $\cA_h(\cdot,\cdot): \cV_h \times \cV_h \rightarrow {\mathbb R}$ defined by 
\begin{equation} \label{nonsing}
    \cA_h(\Theta,\Phi)= A_h(\Theta,\Phi)+B_h(\Psi,\Theta,\Phi)+B_h(\Theta,\Psi,\Phi)
\end{equation} 
is nonsingular on $\cV_h\times\cV_h$. 

\medskip
The next lemma establishes that the perturbed bilinear form $\tilde{\cA}_h(\cdot,\cdot)$, constructed using $\Pi_h\Psi$ is also nonsingular. Though a similar result is proved in \cite{Brezzi} for the conforming case, we provide a proof here for the sake of completeness.
  
\begin{lem}{(Nonsingularity of perturbed bilinear form)}\label{nonsingular} Let
$\Pi_h\Psi$ be the interpolation of $\Psi$ as defined in Lemma~\ref{interpolant}.
Then, for sufficiently small $h$, the perturbed bilinear form defined by  
\begin{equation}\label{defnAht}
\tilde \cA_h(\Theta,\Phi)= A_h(\Theta,\Phi)+B_h(\Pi_h\Psi,\Theta,\Phi)+B_h(\Theta,\Pi_h\Psi,\Phi)
\end{equation}
is nonsingular on $\cV_h\times\cV_h$, if \eqref{nonsing} is nonsingular on $\cV_h\times\cV_h$. 
\end{lem}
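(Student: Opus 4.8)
The plan is to show that the perturbed form $\tilde\cA_h$ is a small perturbation of the nonsingular form $\cA_h$ from \eqref{nonsing}, and then conclude nonsingularity by a standard perturbation argument on the finite-dimensional space $\cV_h$. The difference of the two forms is
\[
\cA_h(\Theta,\Phi)-\tilde\cA_h(\Theta,\Phi)
 = B_h(\Psi-\Pi_h\Psi,\Theta,\Phi)+B_h(\Theta,\Psi-\Pi_h\Psi,\Phi)
\]
for all $\Theta,\Phi\in\cV_h$, so everything reduces to estimating these two trilinear terms with the argument $\Psi-\Pi_h\Psi$ in the first slot, respectively the second slot. First I would bound the term $B_h(\Psi-\Pi_h\Psi,\Theta,\Phi)$: write $\Psi-\Pi_h\Psi=(\Psi-E_h\Pi_h\Psi)+(E_h\Pi_h\Psi-\Pi_h\Psi)$, use \eqref{boundBh1} together with Lemma~\ref{interpolant} and Lemma~\ref{enrich1} to get a factor $\trinl\Psi-\Pi_h\Psi\trinr_{2,h}\le Ch^{\alpha}\trinl\Psi\trinr_{2+\alpha}$ (using the isolatedness a~priori bound \eqref{apriorilin23} to control $\trinl\Psi\trinr_{2+\alpha}$), so that
\[
|B_h(\Psi-\Pi_h\Psi,\Theta,\Phi)|\le Ch^{\alpha}\,\trinl\Psi\trinr_{2+\alpha}\,\trinl\Theta\trinr_{2,h}\,\trinl\Phi\trinr_{2,h}.
\]
The term $B_h(\Theta,\Psi-\Pi_h\Psi,\Phi)$ is handled the same way, again using \eqref{boundBh1} and Lemma~\ref{interpolant}. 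Hence
\[
|\cA_h(\Theta,\Phi)-\tilde\cA_h(\Theta,\Phi)|\le Ch^{\alpha}\trinl\Theta\trinr_{2,h}\trinl\Phi\trinr_{2,h}\qquad\forall\,\Theta,\Phi\in\cV_h.
\]

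Next, since $\cA_h$ is nonsingular on the finite-dimensional space $\cV_h\times\cV_h$ and (by \eqref{boundBh1} and continuity of $A_h$) bounded, there is an inf–sup constant $\beta_h>0$ with $\sup_{\Phi\in\cV_h\setminus\{0\}}\cA_h(\Theta,\Phi)/\trinl\Phi\trinr_{2,h}\ge\beta_h\trinl\Theta\trinr_{2,h}$. The key point — and the place where the argument has to be done carefully rather than just invoked — is that $\beta_h$ can be bounded below independently of $h$: this is exactly what the a~priori estimate $\trinl\Theta_h\trinr_{2,h}\le C\trinl G\trinr$ established in the proof of Theorem~\ref{wellposeld} (for $h\le h_1$) provides, since that bound says the solution operator of $\cA_h$ is bounded uniformly in $h$. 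With a uniform $\beta_h\ge\beta_0>0$ in hand, the perturbation bound above gives, for $\Theta\in\cV_h$,
\[
\sup_{\Phi\in\cV_h\setminus\{0\}}\frac{\tilde\cA_h(\Theta,\Phi)}{\trinl\Phi\trinr_{2,h}}
 \ge \beta_0\trinl\Theta\trinr_{2,h}-Ch^{\alpha}\trinl\Theta\trinr_{2,h}
 \ge \tfrac{\beta_0}{2}\trinl\Theta\trinr_{2,h}
\]
once $h$ is small enough that $Ch^{\alpha}\le\beta_0/2$; the corresponding estimate with the roles of the two arguments swapped follows identically because the perturbation bound is symmetric in $\Theta$ and $\Phi$. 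An inf–sup bound in both arguments on a finite-dimensional space implies $\tilde\cA_h$ is nonsingular on $\cV_h\times\cV_h$, which is the claim.

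The main obstacle is the uniform-in-$h$ lower bound for the inf–sup constant of $\cA_h$: without it, a perturbation of size $Ch^{\alpha}$ cannot be absorbed. I would address this by quoting the a~priori bound from the proof of Theorem~\ref{wellposeld} (valid for $h\le h_1$), which is precisely the uniform stability statement needed; the rest is the routine perturbation estimate on trilinear forms together with the interpolation and enrichment bounds of Lemmas~\ref{interpolant} and~\ref{enrich1}. The final smallness threshold on $h$ is $\min(h_1,h_2)$ where $h_2$ is determined by $Ch_2^{\alpha}=\beta_0/2$.
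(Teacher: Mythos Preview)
Your argument is correct and follows essentially the same perturbation route as the paper: write $\tilde\cA_h=\cA_h-\big(B_h(\Psi-\Pi_h\Psi,\cdot,\cdot)+B_h(\cdot,\Psi-\Pi_h\Psi,\cdot)\big)$, bound the perturbation by $2C_b\trinl\Psi-\Pi_h\Psi\trinr_{2,h}\trinl\Theta\trinr_{2,h}\trinl\Phi\trinr_{2,h}$ via \eqref{boundBh1} and Lemma~\ref{interpolant}, and absorb it into the inf--sup constant of $\cA_h$ for small $h$. Two small remarks: the detour through $E_h\Pi_h\Psi$ is unnecessary since the paper applies \eqref{boundBh1} directly on $\cV+\cV_h$, and the reference to \eqref{apriorilin23} for controlling $\trinl\Psi\trinr_{2+\alpha}$ is misplaced (that bound is for the linearized problem; the regularity of $\Psi$ itself is simply assumed from \cite{BlumRannacher}), though neither affects the validity of your proof.
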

\begin{proof} The bilinear form 
$\cA_h:\cV_h\times\cV_h\map \bR$ is bounded and satisfies
\begin{align*}
&\sup_{\trinl\Theta \trinr_{2,h}=1} \cA_h(\Theta,\Phi)\geq \beta\trinl\Phi\trinr_{2,h}, \quad \sup_{\trinl\Phi\trinr_{2,h}=1}\cA_h(\Theta,\Phi)\geq \beta\trinl\Theta\trinr_{2,h},
\end{align*}
where $\beta >0$ is a constant.
For $\tilde{\Psi} \in \cV+\cV_h$, a use of the above properties of  $\cA_h(\cdot,\cdot)$ and continuity of $B_h(\cdot,\cdot,\cdot)$ (see \eqref{boundBh1}) yields
\begin{align*}
&\sup_{\trinl\Phi \trinr_{2,h}=1} A_h(\Theta,\Phi)+B_h(\Psi-\tilde{\Psi},\Theta,\Phi)+B_h(\Theta,\Psi-\tilde{\Psi},\Phi)\\
&\geq \sup_{\trinl\Phi\trinr_{2,h}=1} \cA_h(\Theta,\Phi)-\sup_{\trinl\Phi\trinr_{2,h}=1} \left(B_h(\tilde{\Psi},\Theta,\Phi)+B_h(\Theta,\tilde{\Psi},\Phi)\right)\\
&\geq \beta\trinl\Theta_h\trinr_{2,h}-2C_b\trinl  {\tilde \Psi}\trinr_{2,h}\trinl\Theta\trinr_{2,h}
\geq \frac{\beta}{2}\trinl\Theta\trinr_{2,h},
\end{align*}
provided $\displaystyle\trinl {\tilde \Psi} \trinr_{2,h}\leq\frac{\beta}{4C_b}$. Such a choice is justified for sufficiently small $h\leq h_2$ (say), by setting $\tilde{\Psi}=\Psi-\Pi_h\Psi$ and using Lemma~\ref{interpolant}. 
Similarly,  $ \displaystyle \sup_{\trinl\Theta \trinr_{2,h}=1} \tilde{\cA}_h(\Theta ,\Phi)\geq \frac{\beta}{2}\trinl\Phi\trinr_{2,h}\fl \Phi\in\cV_h.$  Hence the required result.
\end{proof}

\subsection{Existence and Local Uniqueness Results}
Consider the nonlinear operator $\mu:\cV_h\map\cV_h$ defined by 
\begin{equation}\label{defnmu}
\tilde{\cA}_h(\mu(\Theta),\Phi)=L_h(\Phi)+B_h(\Pi_h\Psi,\Theta,\Phi)+B_h(\Theta,\Pi_h\Psi,\Phi)-B_h(\Theta,\Theta,\Phi)\fl \Phi\in\cV_h.
\end{equation} 
A use of Lemma~\ref{nonsingular} leads to the fact that the mapping $\mu$ is well-defined and continuous. Also, any fixed point of $\mu$ is a solution of \eqref{vformd} and vice-versa. Hence, in order to show the existence of a solution to \eqref{vformd}, we will prove that the mapping $\mu$ has a fixed point. %We prove some results needed to establish the existence of a fixed point of the mapping %$\mu$.
 As a first step to this, define $\bB_R(\Pi_h\Psi):=\left\{\Phi \in\cV_h: \trinl\Phi-\Pi_h\Psi\trinr_{2,h}\leq R \right\}$.
  
\begin{thm}(Mapping of ball to ball) \label{mapball2ball}
For a sufficiently small choice of $h$, there exists a positive constant $R(h)$ such that for any $\Theta \in \cV_h$,
\begin{equation*}
\trinl \Theta-\Pi_h\Psi\trinr_{2,h}\leq R(h)\Rightarrow \trinl\mu(\Theta)-\Pi_h\Psi\trinr_{2,h}\leq R(h).
\end{equation*} 
That is, $\mu$ maps the ball $\bB_{R(h)}(\Pi_h\Psi)$ to itself.
\end{thm}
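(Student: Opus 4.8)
The plan is to estimate $\trinl\mu(\Theta)-\Pi_h\Psi\trinr_{2,h}$ directly from the definition \eqref{defnmu} of $\mu$, testing against a well-chosen $\Phi\in\cV_h$, and then to choose $R(h)$ so that the resulting bound closes. First I would write $\mu(\Theta)-\Pi_h\Psi$ and feed it into the coercivity-type estimate for $\tilde\cA_h(\cdot,\cdot)$ from Lemma~\ref{nonsingular}: there is $\beta>0$ with $\sup_{\trinl\Phi\trinr_{2,h}=1}\tilde\cA_h(\mu(\Theta)-\Pi_h\Psi,\Phi)\geq\frac{\beta}{2}\trinl\mu(\Theta)-\Pi_h\Psi\trinr_{2,h}$. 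Using \eqref{defnmu} to rewrite $\tilde\cA_h(\mu(\Theta),\Phi)$, this reduces the task to bounding, uniformly over $\trinl\Phi\trinr_{2,h}=1$, the quantity
\[
\tilde\cA_h(\mu(\Theta)-\Pi_h\Psi,\Phi)=L_h(\Phi)+B_h(\Pi_h\Psi,\Theta,\Phi)+B_h(\Theta,\Pi_h\Psi,\Phi)-B_h(\Theta,\Theta,\Phi)-\tilde\cA_h(\Pi_h\Psi,\Phi).
\]
Expanding $\tilde\cA_h(\Pi_h\Psi,\Phi)=A_h(\Pi_h\Psi,\Phi)+2B_h(\Pi_h\Psi,\Pi_h\Psi,\Phi)$ (using symmetry of $b_h$ in the last two arguments), the terms involving $B_h$ combine: writing $\Theta=\Pi_h\Psi+(\Theta-\Pi_h\Psi)$, the cross terms telescope so that $B_h(\Pi_h\Psi,\Theta,\Phi)+B_h(\Theta,\Pi_h\Psi,\Phi)-B_h(\Theta,\Theta,\Phi)-2B_h(\Pi_h\Psi,\Pi_h\Psi,\Phi)=-B_h(\Theta-\Pi_h\Psi,\Theta-\Pi_h\Psi,\Phi)$. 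Thus the expression collapses to
\[
L_h(\Phi)-A_h(\Pi_h\Psi,\Phi)-B_h(\Theta-\Pi_h\Psi,\Theta-\Pi_h\Psi,\Phi).
\]

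Next I would estimate the two remaining pieces. The nonlinear term is controlled by \eqref{boundBh1}: $|B_h(\Theta-\Pi_h\Psi,\Theta-\Pi_h\Psi,\Phi)|\leq C_b\trinl\Theta-\Pi_h\Psi\trinr_{2,h}^2\leq C_bR(h)^2$ on the ball. For the consistency term $L_h(\Phi)-A_h(\Pi_h\Psi,\Phi)$, I would insert the exact solution: since $\Psi$ solves \eqref{vform} and hence, roughly, $A_h(\Psi,\Phi)+B_h(\Psi,\Psi,\Phi)$ equals $L_h(\Phi)$ up to nonconforming consistency errors, write
\[
L_h(\Phi)-A_h(\Pi_h\Psi,\Phi)=\big(L_h(\Phi)-A_h(\Psi,\Phi)-B_h(\Psi,\Psi,\Phi)\big)+A_h(\Psi-\Pi_h\Psi,\Phi)+B_h(\Psi,\Psi,\Phi).
\]
The first bracket is the standard Morley consistency error for the biharmonic-type form, which is $O(h^\alpha\trinl\Psi\trinr_{2+\alpha})$ by the usual nonconforming estimate together with Lemma~\ref{defnia}; the term $A_h(\Psi-\Pi_h\Psi,\Phi)$ I would not bound by interpolation alone (that only gives $O(h^\alpha)$ too) — rather I would split $\Phi=E_h\Phi+(\Phi-E_h\Phi)$ and use Lemma~\ref{enrichreg}(i) on the conforming part plus Lemma~\ref{enrich1} on the remainder, together with $A(\Psi-\Pi_h\Psi,E_h\Phi)$ being handled via Lemma~\ref{enrichreg}(ii)-type arguments, to extract an $O(h^\alpha\trinl\Psi\trinr_{2+\alpha})$ bound (this is precisely where the $a~priori$ regularity $\Psi\in(H^{2+\alpha}(\Omega))^2$ from \eqref{apriorilin23} enters). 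Collecting, I obtain
\[
\trinl\mu(\Theta)-\Pi_h\Psi\trinr_{2,h}\leq \frac{2}{\beta}\big(C h^\alpha\trinl\Psi\trinr_{2+\alpha}+C_bR(h)^2\big).
\]

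Finally I would close the argument by choosing $R(h)$. Set $R(h):=C' h^\alpha$ for a constant $C'$ (depending on $\beta$, $C$, $\trinl\Psi\trinr_{2+\alpha}$) chosen so that $\frac{2}{\beta}Ch^\alpha\trinl\Psi\trinr_{2+\alpha}\leq\frac{1}{2}R(h)$; then require $h$ small enough that $\frac{2}{\beta}C_bR(h)^2=\frac{2}{\beta}C_bC'^2h^{2\alpha}\leq\frac{1}{2}R(h)$, which holds for $h\leq h_3$ since $2\alpha>1\geq\alpha$ so $h^{2\alpha}/h^\alpha=h^\alpha\to0$. With these choices $\trinl\mu(\Theta)-\Pi_h\Psi\trinr_{2,h}\leq R(h)$, i.e. $\mu$ maps $\bB_{R(h)}(\Pi_h\Psi)$ into itself. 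The main obstacle is the consistency term $L_h(\Phi)-A_h(\Pi_h\Psi,\Phi)$: getting the sharp $O(h^\alpha)$ rate (not merely $O(h^{\alpha})$ from a naive interpolation estimate on a nonconforming space, which would actually be worse) requires careful use of the enriching operator $E_h$ and the bounds of Lemma~\ref{enrichreg}, exploiting the orthogonality built into the Morley interpolant from Lemma~\ref{interpolant} and the edge-average projection of Lemma~\ref{defnia}; the nonlinear term and the ball-closing are then routine.
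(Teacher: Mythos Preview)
Your overall strategy---use Lemma~\ref{nonsingular} to test $\tilde\cA_h(\mu(\Theta)-\Pi_h\Psi,\Phi)$ against a unit $\Phi$, reduce to a consistency error plus the quadratic term $B_h(\Theta-\Pi_h\Psi,\Theta-\Pi_h\Psi,\Phi)$, and then close by choosing $R(h)\sim h^\alpha$---is exactly the paper's route. However, there is a genuine algebraic error in your telescoping of the $B_h$-terms. Writing $\delta:=\Theta-\Pi_h\Psi$ and expanding by bilinearity gives
\[
B_h(\Pi_h\Psi,\Theta,\Phi)+B_h(\Theta,\Pi_h\Psi,\Phi)-B_h(\Theta,\Theta,\Phi)-2B_h(\Pi_h\Psi,\Pi_h\Psi,\Phi)
=-B_h(\Pi_h\Psi,\Pi_h\Psi,\Phi)-B_h(\delta,\delta,\Phi),
\]
not $-B_h(\delta,\delta,\Phi)$ alone. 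You have dropped the term $-B_h(\Pi_h\Psi,\Pi_h\Psi,\Phi)$. This propagates: in your decomposition of the consistency piece you are left with the stray summand $+B_h(\Psi,\Psi,\Phi)$, which is $O(1)$ (it is bounded only by $C_b\trinl\Psi\trinr_{2,h}^2$), and you never estimate it. With this omission the inequality $\trinl\mu(\Theta)-\Pi_h\Psi\trinr_{2,h}\leq \frac{2}{\beta}(Ch^\alpha+\ldots)$ does not follow.

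The fix is immediate once the missing term is restored: the correct consistency piece is
\[
L_h(\Phi)-A_h(\Pi_h\Psi,\Phi)-B_h(\Pi_h\Psi,\Pi_h\Psi,\Phi),
\]
and your decomposition then produces the difference $B_h(\Psi,\Psi,\Phi)-B_h(\Pi_h\Psi,\Pi_h\Psi,\Phi)$ rather than $B_h(\Psi,\Psi,\Phi)$; this difference \emph{is} $O(h^\alpha)$ by \eqref{boundBh1} and Lemma~\ref{interpolant}. The paper handles precisely this grouping: it writes the expression as $T_1+T_2+T_3+T_4$ with $T_3=B_h(\Psi,\Psi,E_h\bphi)-B_h(\Pi_h\Psi,\Pi_h\Psi,\bphi)$ (after inserting the enrichment $E_h\bphi$ so that $L(E_h\bphi)=A(\Psi,E_h\bphi)+B(\Psi,\Psi,E_h\bphi)$ can be invoked), then estimates $T_1,\ldots,T_4$ via Lemmas~\ref{enrich1}, \ref{enrichreg}, \ref{interpolant} and \eqref{boundBh1} to arrive at \eqref{muestimate}, which is exactly your target inequality. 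Your closing choice $R(h)=C'h^\alpha$ with $h$ small then matches the paper's $R(h)=2C_1h^\alpha$ and $h\leq h_3=(4C_1^2)^{-1/\alpha}$.
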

\begin{proof}
Since  the bilinear form $ \tilde{\cA}_h(\cdot, \cdot)$ is nonsingular, from Lemma \ref{nonsingular}, there exists $\bphi \in\cV_h$ such that $\trinl\bphi \trinr_{2,h}=1$ and 
\begin{align*}
&\frac{\beta}{4}\trinl\mu(\Theta)-\Pi_h\Psi\trinr_{2,h}\leq \tilde\cA_h(\mu(\Theta)-\Pi_h\Psi, \bphi)
\end{align*}
Let $E_h\bphi$ be an enrichment of $\bphi$ (see Lemma \ref{enrich1}).   A use of \eqref{defnAht}, \eqref{defnmu} and \eqref{vform} yields
\begin{align}
&\tilde\cA_h(\mu(\Theta)-\Pi_h\Psi, \bphi)
=\tilde\cA_h(\mu(\Theta), \bphi)-\tilde\cA_h(\Pi_h\Psi, \bphi)\notag\\
&\quad=L_h(\bphi)+B_h(\Pi_h\Psi,\Theta,\bphi)+B_h(\Theta,\Pi_h\Psi,\bphi)-B_h(\Theta,\Theta,\bphi)-A_h(\Pi_h\Psi,\bphi)-2B_h(\Pi_h\Psi,\Pi_h\Psi,\bphi)\notag\\
&\quad=L_h(\bphi-E_h\bphi) +\left(A_h(\Psi, E_h\bphi)-A_h(\Pi_h\Psi,\bphi)\right)+\left(B_h(\Psi,\Psi, E_h\bphi)-B_h(\Pi_h\Psi,\Pi_h\Psi,\bphi)\right)\notag\\
%&\quad+\left(B_h(\Pi_h\Psi,\Theta,\bphi)+B_h(\Theta,\Pi_h\Psi,\bphi)-B_h(\Theta,\Theta,\bphi)-B_h(\Pi_h\Psi,\Pi_h\Psi,\bphi)\right)\\
&\qquad+B_h(\Pi_h\Psi-\Theta,\Theta-\Pi_h\Psi,\bphi)=:T_1+T_2+T_3+T_4. \label{Aht_est}
\end{align}
Now we estimate $\{T_i\}_{i=1}^4$. $T_1$ can be estimated using Lemma~\ref{enrich1} and the continuity of $L_h$.
Using Lemma~\ref{enrichreg}, continuity of $A_h(\cdot,\cdot)$ and Lemma~\ref{interpolant}, we obtain
\begin{align*}
T_2 &\leq |A_h(\Psi, E_h\bphi)-A_h(\Pi_h\Psi,\bphi)|\leq |A_h(\Psi,E_h\bphi-\bphi)|+|A_h(\Psi-\Pi_h\Psi,\bphi)|\leq Ch^\alpha \trinl\Psi\trinr_{2+\alpha}.
\end{align*}
A use of Lemmas~\ref{boundBh2}, \ref{enrich1}, \ref{interpolant} and \eqref{boundBh1} leads to
\begin{align*}
T_3 &\leq | B_h(\Psi,\Psi,E_h\bphi)-B_h(\Pi_h\Psi,\Pi_h\Psi,\bphi)|\\
&\leq |B_h(\Psi,\Psi,E_h\bphi-\bphi)-B_h(\Pi_h\Psi-\Psi,\Pi_h\Psi,\Phi)-B_h(\Psi,\Pi_h\Psi-\Psi,\bphi)|\\
&\leq Ch\trinl\Psi\trinr_{2+\alpha}\,\trinl\Psi\trinr_2\,\trinl E_h\bphi\trinr_{2,h}+Ch^{\alpha} \trinl\Psi\trinr_{2+\alpha}\,\trinl\Psi\trinr_2\,\trinl\bphi\trinr_{2,h}\\
&\leq Ch^\alpha\trinl\Psi\trinr_2\trinl\Psi\trinr_{2+\alpha}.
\end{align*}
Finally,  $T_4$ is estimated using \eqref{boundBh1} as
\begin{align*}
T_4&\leq|B_h(\Pi_h\Psi-\Theta,\Theta-\Pi_h\Psi,\bphi)|\leq C\trinl\Theta-\Pi_h\Psi\trinr_{2,h}^2.
\end{align*}
A substitution of the estimates derived for $T_1, T_2, T_3$ and $T_4$ in \eqref{Aht_est} and an appropriate grouping of the terms yields
\begin{align}\label{muestimate}
&\trinl\mu(\Theta)-\Pi_h\Psi\trinr_{2,h}\leq C_1\left(h^{\alpha}+\trinl\Theta-\Pi_h\Psi\trinr_{2,h}^2\right) 
\end{align}
for some positive constants $C_1$ independent of $h$ but dependent on $\trinl\Psi\trinr_{2+\alpha}$. A choice of $h\leq h_3$, where $h_3=\left(\frac{1}{4C_1^2}\right)^\frac{1}{\alpha}$, yields $4C_1^2 h^\alpha\leq 1$. Since $\trinl \Theta-\Pi_h\Psi\trinr_{2,h}\leq R(h)$, for $h\leq h_3$, a choice of $R(h):=2C_1 h^\alpha$ leads to
\begin{align*}
&\trinl\mu(\Theta)-\Pi_h\Psi\trinr_{2,h}\leq C_1 h^\alpha\left(1+4C_1^2 h^\alpha\right)\leq R(h)
\end{align*}
This completes the proof.
\end{proof}
\begin{thm}(Existence)\label{exitenceuniqueness}
For sufficiently small $h$, there exists a solution $\Psi_h$ of the discrete problem \eqref{vformd} that satisfies $\trinl \Psi_h-\Pi_h\Psi\trinr_{2,h}\leq R(h)$, for some positive constant $R(h)$ depending on $h$.  
\end{thm}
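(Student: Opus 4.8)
The plan is to deduce the existence of a fixed point of $\mu$ via the Brouwer fixed point theorem, and then to identify the fixed point with a solution of \eqref{vformd}. First I would recall from the discussion preceding Theorem~\ref{mapball2ball} that $\mu:\cV_h\map\cV_h$ is well-defined (a consequence of the nonsingularity of $\tilde\cA_h(\cdot,\cdot)$ established in Lemma~\ref{nonsingular}) and continuous, and that a point $\Theta\in\cV_h$ is a fixed point of $\mu$ if and only if $\Theta$ solves \eqref{vformd}; this last equivalence follows on comparing the defining relation \eqref{defnmu} of $\mu$ with the definition \eqref{defnAht} of $\tilde\cA_h$, since setting $\Theta=\mu(\Theta)$ makes the terms $B_h(\Pi_h\Psi,\Theta,\Phi)+B_h(\Theta,\Pi_h\Psi,\Phi)$ cancel and leaves exactly $A_h(\Theta,\Phi)+B_h(\Theta,\Theta,\Phi)=L_h(\Phi)$.

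Next, for $h$ small enough — concretely $h\le \min(h_2,h_3)$ with $h_2$ from Lemma~\ref{nonsingular} and $h_3$ from Theorem~\ref{mapball2ball} — Theorem~\ref{mapball2ball} provides $R(h)=2C_1h^\alpha>0$ such that $\mu$ maps the closed ball $\bB_{R(h)}(\Pi_h\Psi)=\{\Phi\in\cV_h:\trinl\Phi-\Pi_h\Psi\trinr_{2,h}\le R(h)\}$ into itself. This ball is a nonempty, closed, bounded, convex subset of the finite-dimensional normed space $\cV_h$, hence homeomorphic to a closed Euclidean ball, and $\mu$ restricted to it is a continuous self-map. By Brouwer's fixed point theorem, $\mu$ admits a fixed point $\Psi_h\in\bB_{R(h)}(\Pi_h\Psi)$. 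By the equivalence noted above, $\Psi_h$ solves \eqref{vformd}, and by construction $\trinl\Psi_h-\Pi_h\Psi\trinr_{2,h}\le R(h)$, which is the claimed estimate.

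Essentially all the analytic work has already been done in Theorem~\ref{mapball2ball} and Lemma~\ref{nonsingular}; the remaining content of this theorem is the soft topological step, so there is no serious obstacle. The only point requiring a little care is to confirm that the self-map property of $\mu$ on the ball is genuinely invariant (not merely that preimages of the ball contain it) and that $\mu$ is continuous on all of $\cV_h$ — both of which are in hand: continuity of $\mu$ follows from the boundedness and nonsingularity of $\tilde\cA_h(\cdot,\cdot)$ together with the (poly)continuity of $B_h(\cdot,\cdot,\cdot)$ in \eqref{boundBh1}, and the self-map property is exactly the conclusion of Theorem~\ref{mapball2ball}. One should also note that uniqueness is deliberately \emph{not} asserted here; local uniqueness of $\Psi_h$ near $\Pi_h\Psi$ will be taken up separately, and the statement of Theorem~\ref{exitenceuniqueness} only claims existence of one such $\Psi_h$ in the ball.
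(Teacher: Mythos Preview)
Your argument is correct and matches the paper's proof almost exactly: the paper too cites Theorem~\ref{mapball2ball} to get the self-map on $\bB_{R(h)}(\Pi_h\Psi)$ and then invokes a fixed point theorem (it names Schauder rather than Brouwer, but in the finite-dimensional space $\cV_h$ these coincide). Your additional remarks on continuity of $\mu$ and the fixed-point/solution equivalence simply make explicit what the paper records just before the theorem.
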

\begin{proof}
Lemma~\ref{mapball2ball} leads to the fact that $\mu$ maps the ball $\bB_{R(h)}(\Pi_h\Psi)$ to itself. Therefore, an application of Schauder fixed point theorem~\cite{Kesavan} yields that the mapping $\mu$ has a fixed point, say $\Psi_h$. Hence, $\Psi_h$ is an approximate solution of \eqref{vformd} which satisfies $\trinl\Psi_h-\Pi_h\Psi\trinr_{2,h}\leq R(h)$.
\end{proof}

\begin{thm}(Contraction result)\label{contractionthm}
For $\Theta_1,\Theta_2\in \bB_{R(h)}(\Pi_h\Psi)$ with $R(h)$ as defined in Theorem \ref{mapball2ball}, the following contraction result holds true:
 \begin{equation}\label{contractioneqn}
 \trinl\mu(\Theta_1)-\mu(\Theta_2)\trinr_{2,h}\leq Ch^{\alpha}\trinl\Theta_1-\Theta_2\trinr_{2,h},
 \end{equation}
for some positive constant $C$ independent of $h$.
\end{thm}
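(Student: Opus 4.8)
The plan is to estimate $\trinl\mu(\Theta_1)-\mu(\Theta_2)\trinr_{2,h}$ directly from the defining relation \eqref{defnmu} for $\mu$. First I would use the nonsingularity of $\tilde{\cA}_h(\cdot,\cdot)$ from Lemma~\ref{nonsingular}: there exists $\bphi\in\cV_h$ with $\trinl\bphi\trinr_{2,h}=1$ such that
\begin{equation*}
\frac{\beta}{4}\trinl\mu(\Theta_1)-\mu(\Theta_2)\trinr_{2,h}\leq \tilde{\cA}_h(\mu(\Theta_1)-\mu(\Theta_2),\bphi).
\end{equation*}
Subtracting the two instances of \eqref{defnmu} for $\Theta_1$ and $\Theta_2$, the linear term $\tilde{\cA}_h$ is cancelled on the right and one is left with
\begin{equation*}
\tilde{\cA}_h(\mu(\Theta_1)-\mu(\Theta_2),\bphi)=B_h(\Pi_h\Psi,\Theta_1-\Theta_2,\bphi)+B_h(\Theta_1-\Theta_2,\Pi_h\Psi,\bphi)-\big(B_h(\Theta_1,\Theta_1,\bphi)-B_h(\Theta_2,\Theta_2,\bphi)\big).
\end{equation*}

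Next I would write the quadratic difference in telescoping form, $B_h(\Theta_1,\Theta_1,\bphi)-B_h(\Theta_2,\Theta_2,\bphi)=B_h(\Theta_1-\Theta_2,\Theta_1,\bphi)+B_h(\Theta_2,\Theta_1-\Theta_2,\bphi)$, and then combine with the two $\Pi_h\Psi$ terms. Writing $\Theta_i=\Pi_h\Psi+(\Theta_i-\Pi_h\Psi)$ and using $\trinl\Theta_i-\Pi_h\Psi\trinr_{2,h}\leq R(h)$, the $\Pi_h\Psi$ pieces cancel against the leading parts of the quadratic terms, leaving only contributions in which at least one argument is $\Theta_i-\Pi_h\Psi$; more precisely the right-hand side collapses to
\begin{equation*}
-B_h(\Theta_1-\Theta_2,\Theta_1-\Pi_h\Psi,\bphi)-B_h(\Theta_2-\Pi_h\Psi,\Theta_1-\Theta_2,\bphi).
\end{equation*}
Applying the continuity bound \eqref{boundBh1} to each term and using $\trinl\bphi\trinr_{2,h}=1$ together with $\trinl\Theta_i-\Pi_h\Psi\trinr_{2,h}\leq R(h)$ gives
\begin{equation*}
\tilde{\cA}_h(\mu(\Theta_1)-\mu(\Theta_2),\bphi)\leq 2C_b\,R(h)\,\trinl\Theta_1-\Theta_2\trinr_{2,h}.
\end{equation*}

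Combining this with the coercivity estimate and recalling from Theorem~\ref{mapball2ball} that $R(h)=2C_1h^\alpha$, I obtain $\trinl\mu(\Theta_1)-\mu(\Theta_2)\trinr_{2,h}\leq (16C_bC_1/\beta)\,h^\alpha\,\trinl\Theta_1-\Theta_2\trinr_{2,h}$, which is \eqref{contractioneqn} with $C=16C_bC_1/\beta$. The only slightly delicate point is the bookkeeping in the cancellation step: one must carefully expand each $\Theta_i$ around $\Pi_h\Psi$ and verify that all terms that are merely $O(1)$ (rather than $O(R(h))$) in the relevant norms indeed cancel, so that the residual is genuinely of order $R(h)\trinl\Theta_1-\Theta_2\trinr_{2,h}$; this is routine but must be done term by term. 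No further smallness of $h$ is needed beyond what has already been imposed in Theorems~\ref{wellposeld} and~\ref{mapball2ball}, since for $h$ small enough $Ch^\alpha<1$ and $\mu$ is a genuine contraction on $\bB_{R(h)}(\Pi_h\Psi)$.
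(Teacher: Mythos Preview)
Your proposal is correct and follows essentially the same route as the paper: subtract the two instances of \eqref{defnmu}, use the inf--sup bound for $\tilde\cA_h$ from Lemma~\ref{nonsingular}, collapse the resulting expression to $-B_h(\Theta_1-\Theta_2,\Theta_1-\Pi_h\Psi,\bphi)-B_h(\Theta_2-\Pi_h\Psi,\Theta_1-\Theta_2,\bphi)$, and apply \eqref{boundBh1} together with $R(h)=2C_1h^\alpha$. The paper presents exactly this algebraic cancellation (in the equivalent form $B_h(\Theta_2-\Theta_1,\Theta_1-\Pi_h\Psi,\bar\Phi)+B_h(\Theta_2-\Pi_h\Psi,\Theta_2-\Theta_1,\bar\Phi)$) and the same closing estimate.
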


\begin{proof}
For $\Theta_1,\Theta_2\in  \bB_{R(h)}(\Pi_h\Psi)$, let $\mu(\Theta_i), i=1,2$ be the solutions of:
\begin{align}\label{mueqn1}
\tilde\cA_h(\mu(\Theta_i),\Phi)&=L_h(\Phi)+B_h(\Pi_h\Psi,\Theta_i,\Phi)+B_h(\Theta_i,\Pi_h\Psi,\Phi)-B_h(\Theta_i,\Theta_i,\Phi)\fl\Phi\in\cV_h.
%\tilde\cA_h(\mu(\Theta_2),\Phi)&=L_h(\Phi)+2B_h(\Pi_h\Psi,\Theta_2,\Phi)-B_h(\Theta_2,\Theta_2,\Phi)\label{mueqn2}
\end{align}
The nonsingularity of $\tilde\cA_h(\cdot,\cdot)$ yields a $\bar{\Phi}$ with $\trinl\bar\Phi\trinr_{2,h}=1$.  With \eqref{mueqn1} and \eqref{boundBh1}, we obtain
\begin{align*}
&\frac{\beta}{4}\trinl\mu(\Theta_1)-\mu(\Theta_2)\trinr_{2,h}\leq \tilde\cA_h(\mu(\Theta_1)-\mu(\Theta_2),\bar{\Phi})\notag\\
&=B_h(\Pi_h\Psi,\Theta_1-\Theta_2,\bar\Phi)+B_h(\Theta_1-\Theta_2,\Pi_h\Psi,\bar\Phi)+B_h(\Theta_2,\Theta_2,\bar\Phi)-B_h(\Theta_1,\Theta_1,\bar\Phi)\\
&=B_h(\Theta_2-\Theta_1,\Theta_1-\Pi_h\Psi,\bar{\Phi})+B_h(\Theta_2-\Pi_h\Psi,\Theta_2-\Theta_1,\bar{\Phi})\\
&\leq C\trinl\Theta_2-\Theta_1\trinr_{2,h}\left(\trinl\Theta_1-\Pi_h\Psi\trinr_{2,h}+
\trinl\Theta_2-\Pi_h\Psi\trinr_{2,h}\right).
\end{align*}
Since $\Theta_1,\Theta_2\in  \bB_{R(h)}(\Pi_h\Psi)$, for a choice of $R(h)$ as in the proof of  Theorem \ref{mapball2ball}, for sufficiently small $h$, we obtain
\begin{equation}
\trinl\mu(\Theta_1)-\mu(\Theta_2)\trinr_{2,h}\leq Ch^{\alpha} \trinl\Theta_2-\Theta_1\trinr_{2,h},
\end{equation}
for some positive constant $C$ independent of $h$. This completes the proof.
\end{proof}

\begin{rem}(Local uniqueness) Let $\Psi$  be an isolated solution of \eqref{vform}. For sufficiently small choice of $h$, Theorem~\ref{contractionthm} establishes the local uniqueness of the solution of \eqref{vformd}.
\end{rem}
\medskip
\subsection{Error Estimates}
In this subsection, the error estimates in the broken energy and $H^1$ norms are established.
\begin{thm}{(Energy norm estimate)}\label{eetimate}
Let $\Psi$ and $\Psi_h$ be the solutions of \eqref{vform} and \eqref{vformd} respectively. Under the assumption that $\Psi$ is an isolated solution, for sufficiently small $h$, it holds
\begin{equation}
\trinl\Psi-\Psi_h\trinr_{2,h}\leq C h^{\alpha},
\end{equation}
where $\alpha\in (\half,1]$ is the index of elliptic regularity.
\end{thm}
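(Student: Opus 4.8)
The plan is to combine the triangle inequality with the approximation estimate for the Morley interpolant and the fixed-point machinery already developed. Writing $\Psi-\Psi_h = (\Psi-\Pi_h\Psi) + (\Pi_h\Psi-\Psi_h)$, the first term is controlled by Lemma~\ref{interpolant}, which for $\Psi\in(H^{2+\alpha}(\Omega))^2$ gives $\trinl\Psi-\Pi_h\Psi\trinr_{2,h}\leq Ch^{\alpha}\trinl\Psi\trinr_{2+\alpha}$. For the second term, I would invoke Theorem~\ref{exitenceuniqueness}: the solution $\Psi_h$ produced there is a fixed point of $\mu$ lying in $\bB_{R(h)}(\Pi_h\Psi)$ with $R(h)=2C_1h^{\alpha}$, so $\trinl\Pi_h\Psi-\Psi_h\trinr_{2,h}\leq R(h)=2C_1h^{\alpha}$. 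Adding the two bounds yields $\trinl\Psi-\Psi_h\trinr_{2,h}\leq Ch^{\alpha}$ immediately, with the constant depending on $\trinl\Psi\trinr_{2+\alpha}$ (finite by the regularity $\Psi\in H^{2+\alpha}$) and on $C_1$.

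More explicitly, the key steps in order are: (i) recall from the existence proof that for $h\leq h_3$ the discrete solution $\Psi_h$ satisfies $\trinl\Psi_h-\Pi_h\Psi\trinr_{2,h}\leq R(h)=2C_1h^{\alpha}$, where $C_1$ is the constant from \eqref{muestimate} depending only on $\trinl\Psi\trinr_{2+\alpha}$; (ii) apply Lemma~\ref{interpolant} with $m=2$, $p=2$ to get $\trinl\Psi-\Pi_h\Psi\trinr_{2,h}\leq Ch^{\alpha}\trinl\Psi\trinr_{2+\alpha}$, using that $\Psi\in(H^{2+\alpha}(\Omega))^2$; (iii) use the triangle inequality
\begin{equation*}
\trinl\Psi-\Psi_h\trinr_{2,h}\leq\trinl\Psi-\Pi_h\Psi\trinr_{2,h}+\trinl\Pi_h\Psi-\Psi_h\trinr_{2,h}\leq Ch^{\alpha}\trinl\Psi\trinr_{2+\alpha}+2C_1h^{\alpha}
\end{equation*}
and absorb everything into a single generic constant $C$. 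One should also note that taking $h$ smaller than $\min\{h_1,h_2,h_3\}$ ensures all the previously established facts (well-posedness of the discrete linearized problem, nonsingularity of $\tilde{\cA}_h$, the ball-to-ball property) are in force.

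I do not expect a genuine obstacle here, since the heavy lifting — the fixed-point construction, the $G$\aa{}rding-type estimate, the enrichment/interpolation bounds on $A_h$ and $B_h$, and the contraction estimate — has all been carried out in Theorems~\ref{mapball2ball}--\ref{contractionthm}. The only care needed is bookkeeping: making sure the constant $C_1$ in $R(h)$ is indeed independent of $h$ (it is, being a function of $\trinl\Psi\trinr_{2+\alpha}$ and the mesh-independent constants in Lemmas~\ref{interpolant}, \ref{enrich1}, \ref{enrichreg}, \eqref{boundBh1}), and stating clearly that ``sufficiently small $h$'' means $h\leq\min\{h_1,h_2,h_3\}$. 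If one instead wanted a proof not routed through the explicit radius $R(h)$, an alternative is to estimate $\trinl\Pi_h\Psi-\Psi_h\trinr_{2,h}=\trinl\mu(\Pi_h\Psi)-\mu(\Psi_h)\trinr_{2,h}+\trinl\mu(\Pi_h\Psi)-\Pi_h\Psi\trinr_{2,h}$, bounding the first summand by the contraction constant $Ch^{\alpha}\trinl\Pi_h\Psi-\Psi_h\trinr_{2,h}$ (Theorem~\ref{contractionthm}) and the second by $C_1h^{\alpha}$ (taking $\Theta=\Pi_h\Psi$ in \eqref{muestimate}), then absorbing the contraction term into the left-hand side for $h$ small; but the direct route via $R(h)$ is shorter.
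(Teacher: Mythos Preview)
Your proposal is correct and follows essentially the same argument as the paper: split via the triangle inequality into the interpolation error $\trinl\Psi-\Pi_h\Psi\trinr_{2,h}$ (bounded by Lemma~\ref{interpolant}) and $\trinl\Pi_h\Psi-\Psi_h\trinr_{2,h}$ (bounded by $R(h)=2C_1h^{\alpha}$ from Theorem~\ref{exitenceuniqueness}). Your added bookkeeping on the constants and on $h\leq\min\{h_1,h_2,h_3\}$ is accurate and more explicit than the paper's own presentation.
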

\begin{proof}
A use of triangle inequality yields
\begin{equation}\label{newnn}
\trinl\Psi-\Psi_h\trinr_{2,h}\leq \trinl\Psi-\Pi_h\Psi\trinr_{2,h}+\trinl\Pi_h\Psi-\Psi_h\trinr_{2,h}.
\end{equation}
For sufficiently small $h$, Theorem~\ref{exitenceuniqueness} leads to
\begin{equation}\label{pihsoln}
\trinl\Pi_h\Psi-\Psi_h\trinr_{2,h}\leq Ch^\alpha.
\end{equation}
Now, Lemma~\ref{interpolant} , \eqref{pihsoln}  and \eqref{newnn} establish the required estimate.
\end{proof}

\begin{thm}{($H^1$ estimate)}\label{h1estimate}
Let $\Psi$ and $\Psi_h$ be the solutions of \eqref{vform} and \eqref{vformd} respectively. Assume that $\Psi$ is an isolated solution. Then, for sufficiently small $h$, it holds
\begin{equation}
\trinl\Psi-\Psi_h\trinr_{1,h}\leq C h^{2\alpha},
\end{equation}
where $\alpha\in (\half,1]$ is the index of elliptic regularity.
\end{thm}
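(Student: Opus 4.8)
The plan is to use an Aubin--Nitsche duality argument adapted to the nonconforming setting, exploiting the well posedness of the continuous and discrete linearized dual problems (Lemma~\ref{ctsdual}, Remark~\ref{wellposelddual}). First I would reduce to estimating $\trinl\Pi_h\Psi-\Psi_h\trinr_{1,h}$ via the triangle inequality, since $\trinl\Psi-\Pi_h\Psi\trinr_{1,h}\leq Ch^{1+\alpha}\trinl\Psi\trinr_{2+\alpha}$ by Lemma~\ref{interpolant}, which is already of order better than $h^{2\alpha}$ for $\alpha\le 1$. Set $e_h:=\Pi_h\Psi-\Psi_h\in\cV_h$. To control $\trinl e_h\trinr_{1,h}$, I would pass through the enrichment: $\trinl e_h\trinr_{1,h}\leq \trinl e_h-E_he_h\trinr_{1,h}+\trinl E_he_h\trinr_{1}\leq Ch\trinl e_h\trinr_{2,h}+\trinl E_he_h\trinr_{1}$ by Lemma~\ref{enrich1}, and since $\trinl e_h\trinr_{2,h}\leq Ch^\alpha$ by \eqref{pihsoln} and Lemma~\ref{interpolant}, the first term is $O(h^{1+\alpha})$, again acceptable. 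So the crux is bounding $\trinl E_he_h\trinr_1$.

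Next I would estimate $\trinl E_he_h\trinr_1$ by duality: choose $Q=-\Delta E_he_h\in(H^{-1}(\Omega))^2$ in \eqref{ctsaux} so that $\trinl E_he_h\trinr_1^2=\cA(E_he_h,\boldsymbol\zeta)$ with $\trinl\boldsymbol\zeta\trinr_{2+\alpha}\leq C\trinl Q\trinr_{-1}=C\trinl E_he_h\trinr_1$. Then insert $\Pi_h\boldsymbol\zeta$: write $\cA(E_he_h,\boldsymbol\zeta)=\cA_h(E_he_h,\boldsymbol\zeta-\Pi_h\boldsymbol\zeta)+\cA_h(E_he_h-e_h,\Pi_h\boldsymbol\zeta)+\cA_h(e_h,\Pi_h\boldsymbol\zeta)$ (using that $\cA$ and $\cA_h$ coincide on $\cV$-arguments where applicable, and handling the mismatch through $E_he_h-e_h$). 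The term $\cA_h(e_h,\Pi_h\boldsymbol\zeta)$ is the one where the discrete equations enter: from \eqref{vformd} for $\Psi_h$ and the weak formulation \eqref{vform} for $\Psi$, together with the nonsingular form $\cA_h(\cdot,\cdot)$ of \eqref{nonsing}, one gets a consistency-type identity for $\cA_h(\Pi_h\Psi-\Psi_h,\Pi_h\boldsymbol\zeta)$ whose right-hand side collects (i) the $A_h$-consistency error $A_h(\Psi,\Pi_h\boldsymbol\zeta-\boldsymbol\zeta)$ and its $B_h$ analogues, bounded by $Ch^{2\alpha}\trinl\Psi\trinr_{2+\alpha}\trinl\boldsymbol\zeta\trinr_{2+\alpha}$ via Lemma~\ref{enrichreg}(ii) and the $W^{1,\infty}$ flip-bound of Remark~\ref{flipbound}; (ii) a quadratic remainder $B_h(\Pi_h\Psi-\Psi_h,\Pi_h\Psi-\Psi_h,\cdot)$ or $B_h(\Psi-\Pi_h\Psi,\cdot,\cdot)$ type term, bounded using \eqref{boundBh1}, Lemma~\ref{boundBh2} and the already-established energy estimate $\trinl\Psi-\Psi_h\trinr_{2,h}\leq Ch^\alpha$, giving again $O(h^{2\alpha})$. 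The remaining terms $\cA_h(E_he_h,\boldsymbol\zeta-\Pi_h\boldsymbol\zeta)$ and $\cA_h(E_he_h-e_h,\Pi_h\boldsymbol\zeta)$ are handled exactly as in the proof of Theorem~\ref{wellposeld}: Lemma~\ref{enrichreg}(i), Lemma~\ref{enrich1}, \eqref{boundBh1}, Remarks~\ref{boundBh3} and \ref{flipbound} yield bounds of the form $C(h^\alpha\trinl e_h\trinr_{2,h}+\text{(consistency)})\trinl\boldsymbol\zeta\trinr_{2+\alpha}$, i.e.\ $O(h^{2\alpha})\trinl E_he_h\trinr_1$. Dividing through by $\trinl E_he_h\trinr_1$ gives $\trinl E_he_h\trinr_1\leq Ch^{2\alpha}$, and combining with the reductions above finishes the proof.

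The main obstacle I anticipate is bookkeeping the nonconformity and non-symmetry of $B_h$ in the consistency identity for $\cA_h(\Pi_h\Psi-\Psi_h,\Pi_h\boldsymbol\zeta)$: because $b_h(\cdot,\cdot,\cdot)$ is not symmetric in its first and second (or first and third) arguments, one cannot simply transpose arguments, and each cross term involving $\Psi$ versus $\Pi_h\Psi$ must be split carefully so that every factor carrying an interpolation or enrichment error sits in a slot that is controlled by a high-regularity norm ($\trinl\cdot\trinr_{2+\alpha}$) while the low-regularity factor appears in at most an $H^1$-type seminorm; the flip-bound of Lemma~\ref{bhflip12}/Remark~\ref{flipbound} with its extra factor $h$ is exactly what is needed to absorb the one term where an $H^{2,h}$ factor would otherwise be uncontrolled. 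A secondary technical point is ensuring that the product of the two error orders genuinely multiplies — one power $h^\alpha$ from the primal energy estimate and one power $h^\alpha$ from the dual interpolation/elliptic regularity — rather than merely adding, which is why the duality must be run against $\boldsymbol\zeta$ with full $H^{2+\alpha}$ regularity and the quadratic $B_h$ remainder must be estimated with \emph{both} of its first two arguments carrying the $O(h^\alpha)$ error.
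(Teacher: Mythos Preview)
Your proposal is correct and follows essentially the same route as the paper: the same reduction to $\trinl E_h\boldsymbol{\rho}\trinr_1$ via the triangle inequality and Lemma~\ref{enrich1}, the same duality argument with $Q=-\Delta E_h\boldsymbol{\rho}$ in \eqref{ctsaux}, and the same ingredients (Lemmas~\ref{interpolant}, \ref{enrichreg}, \ref{boundBh2}, Remarks~\ref{boundBh3}, \ref{flipbound}, and the quadratic $B_h$ remainder) to extract the $h^{2\alpha}$ order. The only cosmetic difference is the order in which you insert $\Pi_h\boldsymbol{\zeta}$ and split off $E_h\boldsymbol{\rho}-\boldsymbol{\rho}$; the paper first separates $\cA_h(E_h\boldsymbol{\rho}-\boldsymbol{\rho},\boldsymbol{\zeta})+\cA_h(\boldsymbol{\rho},\boldsymbol{\zeta})$ and then expands the second piece into eight explicitly labeled terms $T_1,\dots,T_8$, but the algebra and the estimates for each piece are identical to what you describe.
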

\begin{proof} A use of triangle inequality yields
\begin{equation}\label{trienq}
\trinl\Psi-\Psi_h\trinr_{1,h}\leq \trinl\Psi-\Pi_h\Psi\trinr_{1,h}+\trinl\Pi_h\Psi-\Psi_h\trinr_{1,h}\leq \trinl\Psi-\Pi_h\Psi \trinr_{1,h}+\trinl\boldsymbol{\rho}-E_h\boldsymbol{\rho}\trinr_{1,h}+\trinl E_h\boldsymbol{\rho}\trinr_{1},
\end{equation}
 where $\boldsymbol{\rho}=\Pi_h\Psi-\Psi_h$. A choice of $Q=-\Delta E_h\boldsymbol{\rho}$ and $\Phi=E_h\boldsymbol{\rho}$ in the dual problem~\eqref{ctsaux}  and a use of \eqref{vform}, \eqref{vformd} leads to
\begin{align}
\left(\nabla E_h\boldsymbol{\rho},\nabla E_h\boldsymbol{\rho}\right) &= \cA_h({E_h\boldsymbol{\rho},\boldsymbol \zeta})=\cA_h(E_h\boldsymbol{\rho}-\boldsymbol{\rho},{\boldsymbol \zeta})+\cA_h(\boldsymbol{\rho},{\boldsymbol \zeta})\nonumber\\
&= A_h(E_h\boldsymbol{\rho}-\boldsymbol{\rho},{\boldsymbol \zeta})+B_h(\Psi,E_h\boldsymbol{\rho}-\boldsymbol{\rho},{\boldsymbol \zeta})+B_h(E_h\boldsymbol{\rho}-\boldsymbol{\rho},\Psi,{\boldsymbol \zeta})\nonumber\\
&\quad+A_h(\Pi_h\Psi-\Psi,{\boldsymbol\zeta})+A_h(\Psi-\Psi_h,\boldsymbol{\zeta}-\Pi_h\boldsymbol{\zeta})+A_h(\Psi,\Pi_h\boldsymbol{\zeta}-\boldsymbol{\zeta})+L_h(\boldsymbol{\zeta}-\Pi_h\boldsymbol{\zeta})\nonumber\\
&\quad
+\left(B_h(\Psi,\Pi_h\Psi-\Psi_h,{\boldsymbol \zeta})+B_h(\Pi_h\Psi-\Psi_h,\Psi,{\boldsymbol \zeta})-B_h(\Psi,\Psi,{\boldsymbol \zeta})+B_h(\Psi_h,\Psi_h,\Pi_h{\boldsymbol \zeta})\right)\nonumber\\
&=:\sum_{i=1}^{8} T_i.
\end{align}

$T_1$ is estimated using Lemma \ref{enrichreg} and \eqref{pihsoln}. $T_4$ and $T_6$ are estimated using Lemma~\ref{enrichreg}. $T_5$ is estimated using continuity of $A_h(\cdot,\cdot)$, Lemma~\ref{interpolant} and Theorem~\ref{eetimate}. The term $T_7$ is estimated using continuity of $L_h$ and Lemma~\ref{interpolant}.
$T_2$ is estimated using Remark~\ref{boundBh3}, Lemma~\ref{enrich1} and \eqref{pihsoln} as
\begin{align}\label{T2}
T_2&\leq |B_h(\Psi,E_h\boldsymbol{\rho}-\boldsymbol{\rho},{\boldsymbol \zeta})|
\leq C\trinl\Psi\trinr_{2+\alpha} \trinl E_h\boldsymbol{\rho}-\boldsymbol{\rho}\trinr_{1,h}\trinl{\boldsymbol \zeta}\trinr_2\leq C h^{1+\alpha}\trinl\Psi\trinr_{2+\alpha}\trinl{\boldsymbol \zeta}\trinr_2.
\end{align}
$T_3$ is estimated using Remark~\ref{flipbound}, Lemma~\ref{enrich1}, \eqref{T2} and \eqref{pihsoln} as
\begin{equation*}
T_3\leq|B_h(E_h\boldsymbol{\rho}-\boldsymbol{\rho},\Psi,{\boldsymbol \zeta})|\leq |B_h(\Psi,E_h\boldsymbol{\rho}-\boldsymbol{\rho},{\boldsymbol \zeta})|+Ch\trinl E_h\boldsymbol{\rho}-\boldsymbol{\rho}\trinr_{2,h}\trinl\Psi\trinr_{2+\alpha}\trinl{\boldsymbol \zeta}\trinr_2\leq Ch^{1+\alpha}\trinl\Psi\trinr_{2+\alpha}\trinl{\boldsymbol \zeta}\trinr_2.
\end{equation*}
Finally, a use of Remarks \ref{boundBh3} , \ref{flipbound}, Lemmas~\ref{interpolant}, \ref{boundBh2},  Theorem~\ref{eetimate} and  \eqref{boundBh1} yields an estimate for $T_8$ as
\begin{align*}
T_8& =B_h(\Psi,\Pi_h\Psi-\Psi_h,{\boldsymbol \zeta})+B_h(\Pi_h\Psi-\Psi_h,\Psi,{\boldsymbol \zeta})-B_h(\Psi,\Psi,{\boldsymbol \zeta})+B_h(\Psi_h,\Psi_h,\Pi_h{\boldsymbol \zeta})\\
&=B_h(\Psi,\Pi_h\Psi-\Psi,{\boldsymbol \zeta})+B_h(\Pi_h\Psi-\Psi,\Psi,{\boldsymbol \zeta})\\
&\quad+B_h(\Psi,\Psi-\Psi_h,{\boldsymbol \zeta})+B_h(\Psi-\Psi_h,\Psi,{\boldsymbol \zeta})-B_h(\Psi,\Psi,{\boldsymbol \zeta})+B_h(\Psi_h,\Psi_h,\Pi_h{\boldsymbol \zeta})\\
&=B_h(\Psi,\Pi_h\Psi-\Psi,{\boldsymbol \zeta})+B_h(\Pi_h\Psi-\Psi,\Psi,{\boldsymbol \zeta})\\
&\quad+B_h(\Psi-\Psi_h,\Psi-\Psi_h,{\boldsymbol \zeta})+B_h(\Psi_h-\Psi,\Psi_h,\Pi_h{\boldsymbol \zeta}-{\boldsymbol \zeta})+B_h(\Psi,\Psi_h,\Pi_h{\boldsymbol \zeta}-{\boldsymbol \zeta})\\
&\leq Ch^{2\alpha}(\trinl{\boldsymbol \zeta}\trinr_2 +\trinl{\boldsymbol \zeta}\trinr_{2+\alpha}).
\end{align*}
A combination of the estimates $T_1$ to $T_8$ and $a~priori$ bounds \eqref{apriori23} for the linearized dual problem yields
\begin{align}\label{gradEh}
&(\nabla E_h\boldsymbol{\rho},\nabla E_h\boldsymbol{\rho})\leq Ch^{2\alpha}\trinl -\Delta E_h\boldsymbol{\rho}\trinr_{-1}\leq Ch^{2\alpha}\trinl E_h\boldsymbol{\rho}\trinr_{1} \Longrightarrow \trinl E_h\boldsymbol{\rho}\trinr_{1}\leq C h^{2\alpha}.
\end{align}
A use of Lemmas~\ref{interpolant},~\ref{enrich1},  \eqref{pihsoln} and the last statement of \eqref{gradEh} in \eqref{trienq} completes the proof.
\end{proof}

\subsection{Convergence of the Newton's Method}
In this subsection, we define a working procedure to find an approximation for the discrete solution $\Psi_h$. The discrete solution $\Psi_h$ of \eqref{vformd} is characterized by the fixed point of \eqref{defnmu}. This depends on the unknown $\Pi_h\Psi$  and hence the approximate solution for \eqref{vformd} is computed using Newton's method in implementation. The iterates of the Newton's method are defined by
\begin{equation}\label{NewtonIterate}
A_h(\Psi_h^{n},\Phi)+B_h(\Psi_h^{n-1},\Psi_h^{n},\Phi)+B_h(\Psi_h^{n},\Psi_h^{n-1},\Phi)=B_h(\Psi_h^{n-1},\Psi_h^{n-1},\Phi)+L_h(\Phi)\fl \Phi\in \cV_h.
\end{equation} 
Now we establish that these iterates in fact converge quadratically to the solution of \eqref{vformd}. 

\begin{thm}(Convergence of Newton's method)\label{NewtonThm}
Let $\Psi$ be an isolated solution of \eqref{vform} and let  $\Psi_h$ solve \eqref{vformd}. There exists $\rho> 0$, independent of $h$, such that for any initial guess $\Psi_h^0$ which satisfies
 $\displaystyle \trinl \Psi_h^0-\Psi_h\trinr_{2,h}\leq \rho,\:\, \trinl\Psi_h^n-\Psi_h\trinr_{2,h}  \leq\frac{\rho}{2^n}$
holds true.
That is, the iterates of the Newton's method defined in \eqref{NewtonIterate} are well defined and converge quadratically to $\Psi_h$.
\end{thm}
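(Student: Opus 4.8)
The plan is to establish the quadratic convergence of the Newton iterates \eqref{NewtonIterate} by reformulating the iteration as a fixed point scheme associated with the nonsingular bilinear form $\tilde{\cA}_h(\cdot,\cdot)$ introduced in Lemma~\ref{nonsingular}, and then applying a standard Newton--Kantorovich type estimate. First I would note that \eqref{NewtonIterate} can be rewritten, after adding and subtracting the bilinear terms built on $\Pi_h\Psi$ (or on $\Psi_h$ itself), in the form $\tilde{\cA}_h(\Psi_h^n,\Phi) = \text{(data and lower-order terms in } \Psi_h^{n-1}\text{)}$, so that $\Psi_h^n$ is well defined precisely because $\tilde{\cA}_h(\cdot,\cdot)$ is nonsingular for sufficiently small $h$ (Lemma~\ref{nonsingular}); to be safe one should actually center the linearization at $\Psi_h$ rather than $\Pi_h\Psi$, using that $\cA_h(\cdot,\cdot)$ from \eqref{nonsing} is nonsingular on $\cV_h\times\cV_h$ with an inf-sup constant $\beta/2$ (after possibly shrinking $h$, using $\trinl\Psi_h-\Pi_h\Psi\trinr_{2,h}\le R(h)$ and continuity of $B_h$ via \eqref{boundBh1}).

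Next I would derive the error recursion. Subtracting the equation satisfied by $\Psi_h$ (namely \eqref{vformd}) from \eqref{NewtonIterate} and writing $\boldsymbol{e}^n := \Psi_h^n - \Psi_h$, a short manipulation using the bilinearity of $A_h$ and trilinearity of $B_h$ yields an identity of the schematic form
\begin{equation}
\cA_h(\boldsymbol{e}^n,\Phi) = B_h(\boldsymbol{e}^{n-1},\boldsymbol{e}^{n-1},\Phi) + \big(\text{terms linear in }\boldsymbol{e}^{n-1}\text{ times }\boldsymbol{e}^{n-1}\big)\qquad\forall\,\Phi\in\cV_h,
\end{equation}
where $\cA_h(\cdot,\cdot)$ is the nonsingular form \eqref{nonsing}; the point is that all the terms that are merely linear in $\boldsymbol{e}^{n-1}$ cancel against $\cA_h$, leaving only a genuinely quadratic right-hand side $B_h(\boldsymbol{e}^{n-1},\boldsymbol{e}^{n-1},\Phi)$ (possibly with a couple of permuted copies, all quadratic). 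Then testing with the $\Phi$ realizing the inf-sup supremum for $\boldsymbol{e}^n$, using the inf-sup bound $\cA_h(\boldsymbol{e}^n,\Phi)\ge \tfrac{\beta}{2}\trinl\boldsymbol{e}^n\trinr_{2,h}\trinl\Phi\trinr_{2,h}$ together with the continuity estimate \eqref{boundBh1}, gives
\begin{equation}
\trinl\boldsymbol{e}^n\trinr_{2,h} \le \frac{2C_b}{\beta}\,\trinl\boldsymbol{e}^{n-1}\trinr_{2,h}^2 =: C_N\trinl\boldsymbol{e}^{n-1}\trinr_{2,h}^2.
\end{equation}

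Finally I would close the induction: choose $\rho := \min\{\rho_0,\tfrac{1}{2C_N}\}$, where $\rho_0$ is small enough that all the smallness-of-$h$ / smallness-of-perturbation hypotheses used above (nonsingularity of $\cA_h$, the bounds on $R(h)$, etc.) remain valid on the ball of radius $\rho$ around $\Psi_h$. If $\trinl\boldsymbol{e}^0\trinr_{2,h}\le\rho$, then $\trinl\boldsymbol{e}^1\trinr_{2,h}\le C_N\rho^2\le\tfrac{\rho}{2}$, and inductively $\trinl\boldsymbol{e}^n\trinr_{2,h}\le C_N(\rho/2^{n-1})^2 = C_N\rho^2/4^{n-1}\le \rho/2^n$ once $C_N\rho\le 1/2$; in particular the iterate stays in the ball so the next Newton step is again well defined, closing the loop. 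This also exhibits the quadratic rate $\trinl\boldsymbol{e}^n\trinr_{2,h}\le C_N\trinl\boldsymbol{e}^{n-1}\trinr_{2,h}^2$. The main obstacle I anticipate is bookkeeping in the error identity: one must verify carefully that, after expanding $B_h(\Psi_h^{n-1},\Psi_h^n,\Phi)+B_h(\Psi_h^n,\Psi_h^{n-1},\Phi)-B_h(\Psi_h^{n-1},\Psi_h^{n-1},\Phi)$ around $\Psi_h$, every term that is linear in $\boldsymbol{e}^{n-1}$ is exactly absorbed into $\cA_h(\boldsymbol{e}^n,\cdot)$ (this is where the specific Newton linearization, as opposed to a Picard-type one, matters), and that the non-symmetry of $b_h(\cdot,\cdot,\cdot)$ in its first two arguments does not spoil this cancellation — it does not, because $B_h$ is still trilinear and $\cA_h$ is defined with the matching symmetrized combination $B_h(\Psi_h,\cdot,\cdot)+B_h(\cdot,\Psi_h,\cdot)$.
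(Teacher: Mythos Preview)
Your overall plan---derive a quadratic recursion $\trinl\boldsymbol{e}^n\trinr_{2,h}\le C\trinl\boldsymbol{e}^{n-1}\trinr_{2,h}^2$ from an inf--sup bound plus the continuity \eqref{boundBh1}, then close by induction with $\rho$ small enough that $C\rho\le\tfrac12$---is exactly the paper's strategy.

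The one point where you diverge, and where your write-up is not quite right, is the choice of linearized form in the error identity. If you center at $\Psi$ (as in \eqref{nonsing}) or at $\Psi_h$, the expansion does \emph{not} leave a purely quadratic right-hand side; the honest identity after subtracting \eqref{vformd} from \eqref{NewtonIterate} is
\[
A_h(\boldsymbol{e}^n,\Phi)+B_h(\Psi_h,\boldsymbol{e}^n,\Phi)+B_h(\boldsymbol{e}^n,\Psi_h,\Phi)
+B_h(\boldsymbol{e}^{n-1},\boldsymbol{e}^n,\Phi)+B_h(\boldsymbol{e}^n,\boldsymbol{e}^{n-1},\Phi)
=B_h(\boldsymbol{e}^{n-1},\boldsymbol{e}^{n-1},\Phi),
\]
with genuine cross terms in $\boldsymbol{e}^{n-1},\boldsymbol{e}^n$ on the left that are not absorbed into $\cA_h(\boldsymbol{e}^n,\cdot)$. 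The paper sidesteps this by observing that the entire left-hand side is precisely the Newton linearization $A_h(\boldsymbol{e}^n,\Phi)+B_h(\Psi_h^{n-1},\boldsymbol{e}^n,\Phi)+B_h(\boldsymbol{e}^n,\Psi_h^{n-1},\Phi)$, i.e.\ the form centered at the \emph{current iterate}. Its nonsingularity (with constant $\beta/8$) is then obtained from Lemma~\ref{nonsingular} via $\trinl\Psi_h^{n-1}-\Pi_h\Psi\trinr_{2,h}\le\trinl\Psi_h^{n-1}-\Psi_h\trinr_{2,h}+\trinl\Psi_h-\Pi_h\Psi\trinr_{2,h}\le\rho+\delta/2\le\delta$, taking $\rho=\min\{\delta/2,\beta/(16C_b)\}$. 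This yields directly $\trinl\boldsymbol{e}^n\trinr_{2,h}\le(8C_b/\beta)\trinl\boldsymbol{e}^{n-1}\trinr_{2,h}^2$, and the induction closes as you describe. Your route can be repaired by moving the cross terms to the left and recognizing the resulting form as the one centered at $\Psi_h^{n-1}$, so the gap is cosmetic rather than fatal; but the claim that ``every term linear in $\boldsymbol{e}^{n-1}$ is absorbed into $\cA_h$'' is not correct as stated.
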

\begin{proof}
From Lemma~\ref{nonsingular}, there exists $\delta>0$ such that for each $Z_h\in\cV_h$ satisfying $\trinl Z_h-\Pi_h\Psi\trinr_{2,h}\leq \delta$, the form
\begin{equation}\label{NewtonNonsingular}
 A_h(\Theta,\Phi)+B_h(Z_h,\Theta,\Phi)+B_h(\Theta,Z_h,\Phi)
\end{equation}
\smallskip
is non singular in $\cV_h\times\cV_h$.
From \eqref{pihsoln}, for sufficiently small $h$, $\trinl\Pi_h\Psi-\Psi_h\trinr_{2,h}\leq Ch^\alpha$. Thus $h$ can be chosen sufficiently small so that $\trinl\Pi_h\Psi-\Psi_h\trinr_{2,h}\leq \frac{\delta}{2}$. Define
\begin{equation}\label{defnrho}
\rho:=\min\left\{\frac{\delta}{2},\frac{\beta}{16C_b}\right\}
\end{equation}
where $\beta$ and $ C_b$ are respectively the coercivity constant of $\cA_h(\cdot,\cdot)$ and boundedness constant of $B_h(\cdot,\cdot,\cdot)$ (see \eqref{boundBh2}).
Assume that the initial guess $\Psi_h^0$ satisfies $\trinl\Psi_h-\Psi_h^0\trinr_{2,h}\leq \rho$. Then
\begin{equation*}
\trinl\Pi_h\Psi-\Psi_h^0\trinr_{2,h}\leq \trinl\Pi_h\Psi-\Psi_h\trinr_{2,h}+\trinl\Psi_h-\Psi_h^0\trinr_{2,h}\leq \delta
\end{equation*}
Since \eqref{NewtonNonsingular} is nonsingular, the first iterate $\Psi_h^1$ of the Newton's method in \eqref{NewtonIterate} is well defined for the initial guess $\Psi_h^0$. Using the nonsingularity of \eqref{NewtonNonsingular}, there exists $\bar\Phi\in\cV_h$ such that $\trinl\bar\Phi\trinr_{2,h}=1$ which satisfies
\begin{equation*}
\frac{\beta}{8}\trinl\Psi_h^1-\Psi_h\trinr_{2,h}\leq A_h(\Psi_h^1-\Psi_h,\bar\Phi)+B_h(\Psi_h^0,\Psi_h^1-\Psi_h,\bar\Phi)+B_h(\Psi_h^1-\Psi_h,\Psi_h^0,\bar\Phi).
\end{equation*}
A use of \eqref{NewtonIterate}, \eqref{vformd}, \eqref{boundBh1} yields 
\begin{align}
&A_h(\Psi_h^1-\Psi_h,\bar\Phi)+B_h(\Psi_h^0,\Psi_h^1-\Psi_h,\bar\Phi)+B_h(\Psi_h^1-\Psi_h,\Psi_h^0,\bar\Phi)\notag\\
&=B_h(\Psi_h^0,\Psi_h^0,\bar \Phi)+L_h(\bar \Phi)-A_h(\Psi_h,\bar \Phi)-B_h(\Psi_h^0,\Psi_h,\bar \Phi)-B_h(\Psi_h,\Psi_h^0,\bar \Phi)\notag\\
&=B_h(\Psi_h^0,\Psi_h^0,\bar \Phi)+B_h(\Psi_h,\Psi_h,\bar \Phi)-B_h(\Psi_h^0,\Psi_h,\bar \Phi)-B_h(\Psi_h,\Psi_h^0,\bar \Phi)\notag\\
&=B_h(\Psi_h^0-\Psi_h,\Psi_h^0-\Psi_h,\bar \Phi)\leq C_b\trinl \Psi_h^0-\Psi_h\trinr_{2,h}^2.\label{induction0}
\end{align}
Hence, $\trinl\Psi_h^1-\Psi_h\trinr_{2,h}\leq \frac{8C_b}{\beta}\trinl \Psi_h^0-\Psi_h\trinr_{2,h}^2$. Since $\trinl\Psi_h^0-\Psi_h\trinr\leq \rho\leq \frac{\beta}{16C_b}$, we obtain
\begin{equation}\label{induction0b}
\trinl\Psi_h^1-\Psi_h\trinr_{2,h}\leq\half\trinl \Psi_h^0-\Psi_h\trinr_{2,h}\leq \frac{\rho}{2}.
\end{equation}
Since $\trinl\Psi_h^1-\Psi_h\trinr_{2,h}\leq \rho$, the form~\eqref{NewtonNonsingular} is nonsingular for $Z_h=\Psi_h^1$. Continuing the process, we obtain
\begin{equation}
\trinl \Psi_h^n-\Psi_h\trinr_{2,h}\leq \frac{\rho}{2^n}.
\end{equation}
Moreover, proceeding as in the proof of the estimate \eqref{induction0}, it can be shown that
\begin{equation}
\trinl\Psi_h^{n+1}-\Psi_h\trinr_{2,h}\leq \left(8C_b/\beta\right)\trinl\Psi_h^n-\Psi_h\trinr_{2,h}^2.
\end{equation}
This establishes that the Newton's method converges quadratically to $\Psi_h$.
This completes the proof.
\end{proof}
\begin{rem}{(Local uniqueness)}
The local uniqueness of solution of \eqref{vformd} also follows from Theorem~\ref{NewtonThm}. We observe that the definition of $\rho$ in \eqref{defnrho} does not depend on $h$. From  Theorem~\ref{NewtonThm}, it is clear that for any initial guess $\Psi_h^0$ which lies in the ball of radius $\rho$ with center at $\Psi_h$, the sequence generated by \eqref{NewtonIterate} will converge uniquely to $\Psi_h$.  In particular, if we choose the initial guess $\Psi_h^0=\Pi_h\Psi$, then the sequence generated by the iterates of the Newton's method will also converge to $\Psi_h$ which shows the local uniqueness of the solution $\Psi_h$.
\end{rem}

\section{Numerical Experiments}
 \label{sec:num}
 
  In this section, two numerical experiments that justify the theoretical results are presented. The  implementations have been carried out in MATLAB. The results illustrate the order of convergence obtained for the numerical solution of \eqref{vke}-\eqref{vkb} computed using the Morley finite element scheme. For a detailed description of construction of basis functions for the Morley element, see Ming \& Xu~\cite{MingXu}. We implement the Newton's method defined in \eqref{NewtonIterate} to solve the discrete problem~\eqref{vformd}.

%\subsection{Example 1}
\subsection{Example 1}\label{example1}
In the first example, we choose the right hand side load functions such that the exact solution is given by
 \begin{align*}
     u(x,y)&=x^2(1-x)^2y^2(1-y)^2;\quad v(x,y)=\sin^2(\pi x)\sin^2(\pi y)
 \end{align*}
on the unit square. The initial triangulation is chosen  as shown in Figure~\ref{fig:MT_h}(a). In the uniform red-refinement process, each triangle $T$ is divided into four similar triangles \cite{AlbertyCarstensenFunken} as in Figure~\ref{fig:MT_h}(b). 
 
Let the mesh parameter at the $N$-th level be denoted by $h_N$ and the computational error by $e_N$. The  experimental order of convergence at the $N$-th level is defined by
       \begin{equation*}
       \alpha_N:=log(e_{N-1}/e_N)/log(h_{N-1}/h_{N})=log(e_{N-1}/e_N)/log(2).
       \end{equation*}

\begin{figure}[h]
\begin{center}
\subfloat[]{\includegraphics[width = 2in]{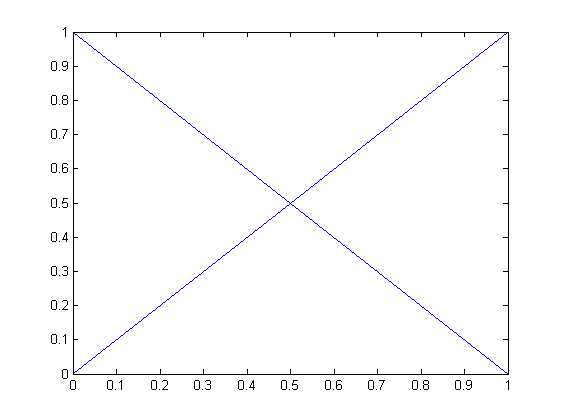}}
     \subfloat[]{\includegraphics[width = 2in]{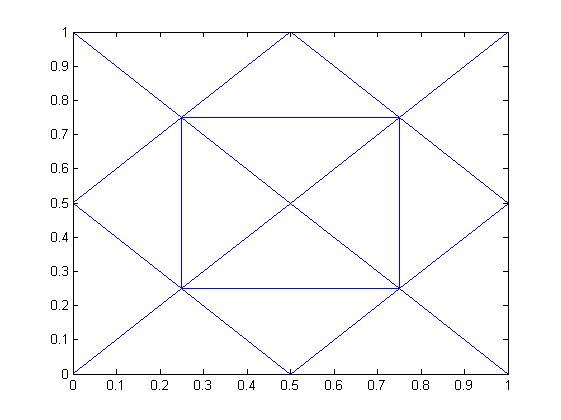}}
     \caption{Initial triangulation $\mathcal{T}_0$ and its red refinement $\mathcal{T}_1$ }
     \label{fig:MT_h}
\end{center}
     
\end{figure}

Tables ~\ref{table:OC_NCFEM_uh} and ~\ref{table:OC_NCFEM_vh} show the errors and experimental convergence rates for the variables $u_h$ and $v_h$. In Figures ~\ref{fig:convrate_uhM}-\ref{fig:convrate_vhM}, the convergence history of the errors in broken energy, $H^1$ and $L^2$ norms are illustrated. The computational order of convergences in broken $H^2,\; H^1$ norms are quasi-optimal and verify the theoretical results obtained in Theorems~\ref{eetimate} and ~\ref{h1estimate} for $\alpha=1$. The order of convergence with respect to $L^2$ norm is sub-optimal justifying the results in~\cite{HuShi} that using a lower order finite element method, the order of convergence in $L^2$ norm cannot be improved than that of the $H^1$ norm.
   
\begin{table}
   \begin{center}
     \begin{tabular}{ | c| c | c |c |c |c |c |}     
      \hline
      \# unknowns & $|u-u_h|_{2,h}$ & Order  & $|u-u_h|_{1,h}$ & Order & $\|u-u_h\|_{L^2}$ & Order\\ 
       \hline \hline
      
     %5     &  0.136866E-0     & -         & 0.253190E-1 &-         & 0.124938E-1  & - \\ 
      %\hline
     25    &  0.874685E-1  & -         & 0.102155E-1 & -   & 0.386068E-2  & -\\ 
      \hline
     113   &  0.405787E-1  & 1.1080    & 0.257318E-2 & 1.9891   & 0.919743E-3  & 2.0695 \\ 
      \hline
     481   &  0.209921E-1  & 0.9508    & 0.732470E-3 & 1.8127   & 0.248134E-3  & 1.8901\\ 
       \hline
     1985  &  0.106209E-1  & 0.9829    & 0.191118E-3 & 1.9383   & 0.636227E-4  & 1.9635\\ 
       \hline
     8065  &  0.532754E-2  & 0.9953    & 0.483404E-4 & 1.9831   & 0.160158E-4  & 1.9900\\
       \hline 
     32513 &  0.266595E-2  & 0.9988    & 0.121213E-4 & 1.9956   & 0.401107E-5  & 1.9974\\
       \hline
     \end{tabular}
    \end{center}
     \caption{Errors and convergence rates of $u_h$  in broken $H^2,H^1$ and $L^2$ norms}
     \label{table:OC_NCFEM_uh}
 \end{table}
     
 \begin{table}
    \begin{center}
       \begin{tabular}{ |c| c | c |c |c |c |c |}     
         \hline
         \# unknowns & $|v-v_h|_{2,h}$ & Order  & $|v-v_h|_{1,h}$ & Order & $\|v-v_h\|_{L^2}$ & Order\\ 
          \hline \hline
 
         % 5     & 14.281857 & -        & 1.602311E-0 &-         & 0.765870E-0 & -\\ 
          %\hline
          25    & 19.245671 & -   & 2.140613E-0 &-   & 0.770876E-0 &- \\ 
          \hline
          113   & 9.5043699 & 1.0178   & 0.569979E-0 & 1.9090   & 0.177898E-0 & 2.1154 \\ 
          \hline
          481   & 5.0549209 & 0.9109   & 0.161737E-0 & 1.8172   & 0.482777E-1 & 1.8816 \\ 
          \hline
          1985  & 2.5758939 & 0.9726   & 0.421546E-1 & 1.9398   & 0.123930E-1 & 1.9618 \\ 
          \hline
          8065  & 1.2944929 & 0.9926   & 0.106618E-1 & 1.9832   & 0.312076E-2 & 1.9895\\
          \hline  
          32513 & 0.6480848 & 0.9981   & 0.267351E-2 & 1.9956   & 0.781643E-3 & 1.9973\\
          \hline
       \end{tabular}
    \end{center}
    \caption{Errors and convergence rates of $v_h$  in broken $H^2,H^1$ and $L^2$ norms}\label{table:OC_NCFEM_vh}
 \end{table}
   
\begin{figure}[h]
   \begin{center}
    \includegraphics[height=3in,width=5in,angle=0]{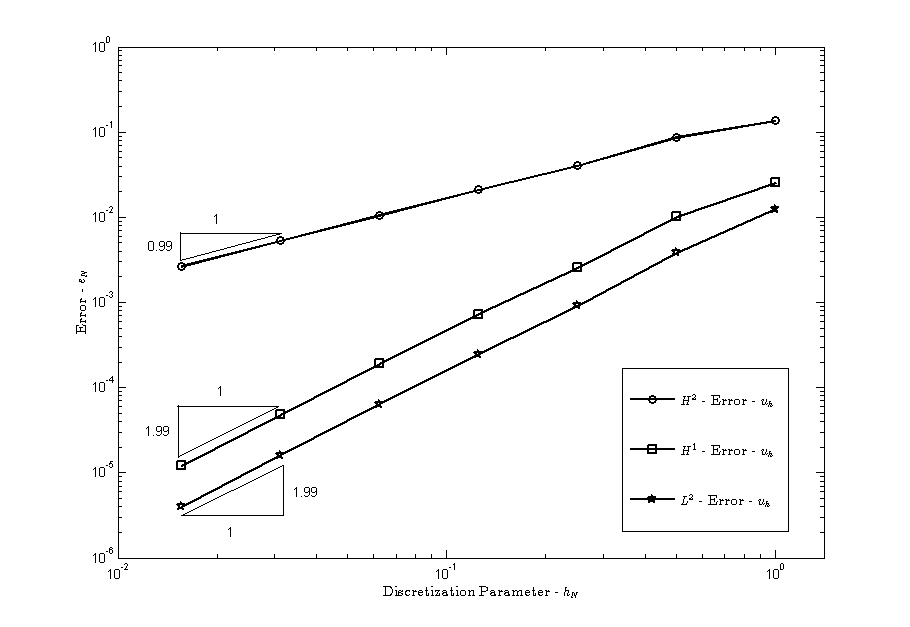}
    \caption{Convergence history of displacement for Example 1}
      \label{fig:convrate_uhM}
   \end{center}
\end{figure}

\begin{figure}[h]
  \begin{center}
   \includegraphics[height=3in,width=5in,angle=0]{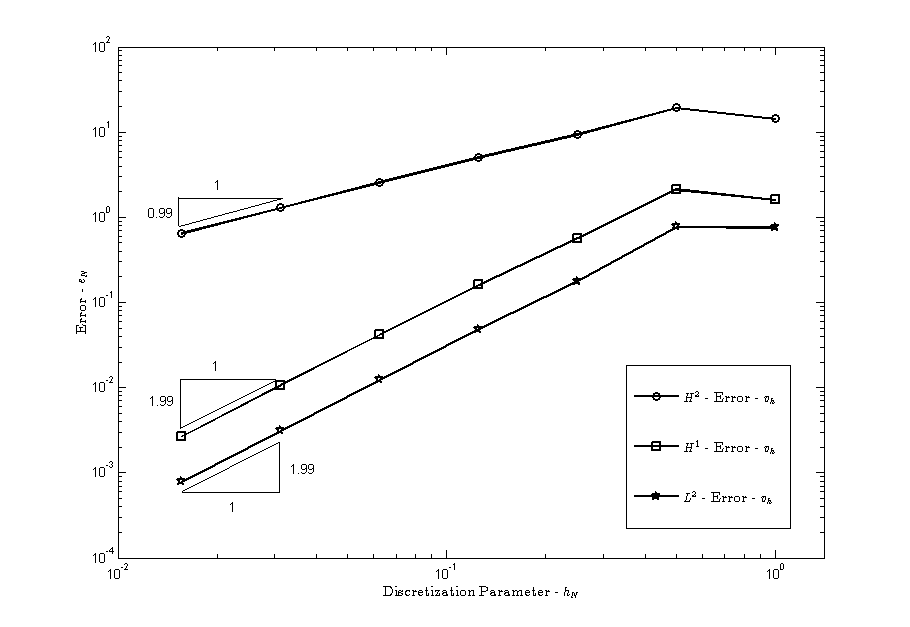}
   \caption{Convergence history of Airy stress for Example 1}
   \label{fig:convrate_vhM}
  \end{center}
\end{figure}

\subsection{Example 2}\label{example2}
Consider the L-shaped domain $\Omega=(-1,1)^2 \setminus([0,1)\times(-1,0])$ (see Figure ~\ref{fig:Lshape}). Choose the right hand functions such that the exact singular solution~\cite{Grisvard} in polar coordinates is given by
\begin{align*}
 u(r,\theta)&=(r^2 cos^2\theta-1)^2 (r^2 sin^2\theta-1)^2 r^{1+\alpha}g_{\alpha,\omega}(\theta);\quad v(r,\theta)=u(r,\theta),
\end{align*}
  where $\omega:=\frac{3\pi}{2}$ and $\alpha:= 0.5444837367$ is a non-characteristic 
  root of $\sin^2(\alpha\omega) = \alpha^2\sin^2(\omega)$ with
  \begin{align*}
    g_{\alpha,\omega}(\theta)=&\left(\frac{1}{\alpha-1}\sin\big{(}(\alpha-1)\omega\big{)}-\frac{1}{\alpha+1}\sin\big{(}(\alpha+1)\omega\big{)}\right)\Big{(}\cos\big{(}(\alpha-1)\theta\big{)}-\cos\big{(}(\alpha+1)\theta\big{)}\Big{)}\\
    &-\left(\frac{1}{\alpha-1}\sin\big{(}(\alpha-1)\theta\big{)}-\frac{1}{\alpha+1}\sin\big{(}(\alpha+1)\theta\big{)}\right)\Big{(}\cos\big{(}(\alpha-1)\omega\big{)}-\cos\big{(}(\alpha+1)\omega\big{)}\Big{)}.
    \end{align*}     
  Tables~\ref{table:OC_Lshape_uh} and~\ref{table:OC_Lshape_vh} show the errors and experimental convergence rates for the variables $u_h$ and $v_h$. The domain being non-convex, we do not obtain linear and quadratic order of convergences in broken energy and $H^1$ norms for displacement and Airy stress functions.
 
 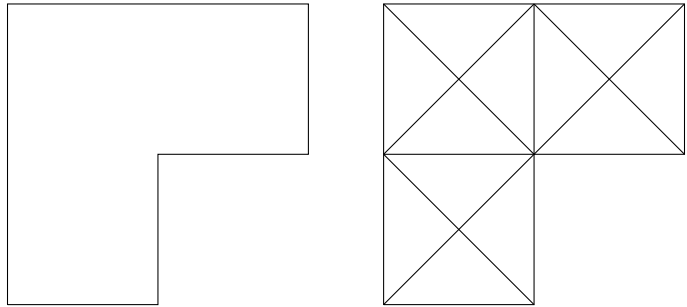
\begin{figure}[h]
    \begin{center}
      \begin{tikzpicture}
       \draw[scale=2]  (0,0)--(1,0)--(1,1)--(2,1)--(2,2)--(0,2)--(0,0);
       \draw[scale=2]  (2.5,0)--(3.5,0)--(3.5,1)--(4.5,1)--(4.5,2)--(2.5,2)--(2.5,0);
       \draw[scale=2]  (2.5,0)--(4.5,2);
       \draw[scale=2]  (2.5,1)--(3.5,2);
       \draw[scale=2]  (2.5,1)--(3.5,1)--(3.5,2);
       \draw[scale=2]  (2.5,1)--(3.5,0);
       \draw[scale=2]  (2.5,2)--(3.5,1);
       \draw[scale=2]  (3.5,2)--(4.5,1);
       
      \end{tikzpicture}
      \caption{L-shaped domain and its initial triangulation}
      \label{fig:Lshape}
    \end{center}
 \end{figure}
 \medskip

 \begin{table}[!h]
    \begin{center}
     \begin{tabular}{ | c| c | c |c |c |c |c |}     
     \hline
     \# unknowns  & $|u-u_h|_{2,h}$ & Order  & $|u-u_h|_{1,h}$ & Order & $\|u-u_h\|_{L^2}$ & Order\\ 
      \hline \hline
    
        17   &  29.209171   & -      & 6.363539E-0  & -        & 2.769499E-0 & -\\ 
      \hline
        81   &  14.130192   & 1.0476 & 1.682747E-0  & 1.9190   & 0.693436E-0 & 1.9977 \\ 
      \hline
       353   &  7.5651300   & 0.9013 & 0.491659E-0  & 1.7750   & 0.200814E-0 & 1.7879 \\ 
      \hline
      1473   &  3.9620126   & 0.9331 & 0.146551E-0  & 1.7462   & 0.583024E-1 & 1.7842\\ 
      \hline
      6017   &  2.0841141   & 0.9267 & 0.487106E-1  & 1.5891   & 0.179703E-1 & 1.6979 \\ 
      \hline
     24321   &  1.1252534   & 0.8891 & 0.187772E-1  & 1.3752   & 0.613474E-2 & 1.5505\\
      \hline
    \end{tabular}
    \end{center}
    \caption{Errors and the experimental convergence rates for $u_h$ in broken $H^2, H^1$ and $L^2$ norms for L-shaped domain}
    \label{table:OC_Lshape_uh}
    \end{table}
    \medskip

    \begin{table}[!h]
       \begin{center}
        \begin{tabular}{ | c| c | c |c |c |c |c |}     
        \hline
        \# unknowns & $|v-v_h|_{2,h}$ & Order  & $|v-v_h|_{1,h}$ & Order & $\|v-v_h\|_{L^2}$ & Order\\ 
        \hline \hline
    
        17   &  24.759835  & -        & 4.932699E-0 & -        & 2.069151E-0 & -      \\ 
      \hline
        81   &  15.293270  & 0.6951   & 1.779132E-0 & 1.4712   & 0.727981E-0 & 1.5070 \\ 
      \hline
       353   &  7.8509322  & 0.9619   & 0.483823E-0 & 1.8786   & 0.199644E-0 & 1.8664 \\ 
      \hline
      1473   &  4.0531269  & 0.9538   & 0.137278E-0 & 1.8173   & 0.557622E-1 & 1.8400\\ 
      \hline
      6017   &  2.1219988  & 0.9336   & 0.439086E-1 & 1.6445   & 0.165699E-1 & 1.7507 \\ 
      \hline
     24321   &  1.1421938  & 0.8936   & 0.165883E-1 & 1.4043   & 0.545066E-2 & 1.6040\\
      \hline        
        
       \end{tabular}
       \end{center}
       \caption{Errors and the experimental convergence rates for $v_h$ in broken $H^2, H^1$ and $L^2$ norms for 
L-shaped domain}
       \label{table:OC_Lshape_vh}
       \end{table}

\section{Conclusions \& Perspectives}
\label{conclusions}
In this work, an attempt has been made to obtain approximate solutions for the clamped von K\'{a}rm\'{a}n equations defined on polygonal domains using nonconforming Morley elements. Error estimates in broken energy and $H^1$ norms are established for sufficiently small discretization parameters. Numerical results that substantiate the theoretical results are obtained. A future area of interest would be derivation of reliable $a~posteriori$ error estimates that drive the adaptive mesh refinements.

\medskip

{\bf Acknowledgments: }
The authors would like to sincerely thank Professors S. C. Brenner and Li-yeng Sung for their suggestions on extension of the results to non-convex polygonal domains and  to Dr. Thirupathi Gudi for his comments. The first author would also like to thank National Board for Higher Mathematics (NBHM) for the financial support towards the research work.\\
\medskip

%%%-----------------------------
%%%      your bibliography
\bibliographystyle{amsplain}
\bibliography{vKeBib}
%@book{Xbook,
%    author    = "",
%    title     = "",
%    publisher = "",
%    %volume   = "",
%    %number   = "",
%    %series   = "",
%    %address  = "",
%    %edition  = "",
%    year      = "XXXX",
%    %month    = "",
%    %note     = "",
%}
%@article{Xarticle,
%    author    = "",
%    title     = "",
%    journal   = "",
%    %volume   = "",
%    %number   = "",
%    %pages    = "",
%    year      = "XXXX",
%    %month    = "",
%    %note     = "",
%}

\section{Appendix}
We consider one of the variants of von K\'{a}rm\'{a}n equations which is important in practical applications and give a brief sketch of the extension of the analysis. %considered for the normalized von K\'{a}rm\'{a}n equations \eqref{vke}-\eqref{vkb}. 
Consider the following form of von K\'{a}rm\'{a}n equations:
\begin{equation}\label{vkes}
   \left.
    \begin{array}{l l}
      \Delta^2 u &=[u,v]-\frac{p}{D}\Delta u+f \\
      \Delta^2 v &=-\half[u,u]
    \end{array}
   \right\} \text{in } \Omega
  \end{equation}
  with clamped boundary conditions
  \begin{equation}\label{vkbs}
   u=\frac{\partial u}{\partial \nu} = v = \frac{\partial v}{\partial \nu} = 0 \text{  on  } \partial\Omega,
  \end{equation}
where $p$ is a real parameter known as the bifurcation parameter and $D$ denotes the flexural rigidity of the plate. 
The weak formulation of \eqref{vkes}-\eqref{vkbs} reads as: given $F=(f,0)$, find $\Psi\in\cV$ such that
\begin{equation}\label{vforms}
A(\Psi,\Phi)+B(\Psi,\Psi,\Phi)+\fC(\Psi,\Phi)=L(\Phi) \fl \Phi \in  \mathcal{V}
\end{equation}
where $A(\cdot,\cdot),\;B(\cdot,\cdot,\cdot),\; L(\cdot)$ are defined in \eqref{defnA}-\eqref{defnL} respectively, and $\fC(\cdot,\cdot)$ is defined as  
\begin{equation}
\fC(\Theta,\Phi)=-\frac{p}{D}\int_{\Omega}\nabla \theta_1\cdot\nabla\varphi_1\dx\fl\Theta=(\theta_1,\theta_2) \text{ and } \Phi=(\varphi_1,\varphi_2)\in \cV.
\end{equation}
The corresponding nonconforming finite element formulation is given by: find $\Psi_h\in\cV_h$ such that
\begin{equation}\label{vformsd}
A_h(\Psi_h,\Phi)+B_h(\Psi_h,\Psi_h,\Phi)+\fC_h(\Psi_h,\Phi)=L_h(\Phi) \fl \Phi \in  \mathcal{V}_h
\end{equation}
where $A_h(\cdot,\cdot),\;B_h(\cdot,\cdot,\cdot),\; L_h(\cdot)$ are defined in \eqref{defnAh}-\eqref{defnLh} respectively, and $\fC_h(\cdot,\cdot)$ is defined as
\begin{equation}
\fC_h(\Theta,\Phi)=-\frac{p}{D}\sum_{T\in\cT_h}\int_{T}\nabla \theta_1\cdot\nabla\varphi_1\dx\fl\Theta=(\theta_1,\theta_2) \text{ and } \Phi=(\varphi_1,\varphi_2)\in \cV_h.
\end{equation}
For the newly introduced bilinear form $\fC(\cdot,\cdot)$, the following boundedness properties hold true:
\begin{align}
\fC(\Theta,\Phi)&\leq C\trinl\Theta\trinr_1\trinl\Phi\trinr_1\fl\Theta,\Phi\in\cV\label{Cbcont}\\
\fC_h(\Theta_h,\Phi_h)&\leq C\trinl\Theta_h\trinr_{1,h}\trinl\Phi_h\trinr_{1,h}\fl\Theta_h,\Phi_h\in\cV_h\label{Cbnconf}.
\end{align}
For the modified problem \eqref{vforms}, the linearized problem (see \eqref{vformld}) is defined by: for given $G\in\ltsq$, find $\Theta\in\cV$ such that
\begin{equation}\label{vformsl}
     \cA(\Theta,\Phi)=(G,\Phi) \fl \Phi \in \mathcal{V}
   \end{equation}
where
\begin{equation}\label{defnCh}
\cA(\Theta,\Phi):=A(\Theta,\Phi)+B(\Psi,\Theta,\Phi)+B(\Theta,\Psi,\Phi)+\fC(\Theta,\Psi).
\end{equation}
\smallskip
The dual problem is stated as: given $Q\in (H^{-1}(\Omega))^2$, find $\boldsymbol{\zeta}\in\cV$ such that
\begin{equation}\label{vformsldual}
     \cA(\Phi,\boldsymbol{\zeta})=(Q,\Phi) \fl \Phi \in \mathcal{V}.
\end{equation}
It can be observed that if $\Psi$ is an isolated solution of \eqref{vforms}, then \eqref{vformsl} and \eqref{vformsldual} are well posed and satisfy the $a~priori$ bounds
\begin{equation}\label{apriorislin23}
\trinl\boldsymbol{\Theta}\trinr_2\leq C\trinl G\trinr,\:\: \trinl\boldsymbol{\Theta}\trinr_{2+\alpha}\leq C\trinl G\trinr\text{  and  } \trinl\boldsymbol{\zeta}\trinr_2\leq C\trinl Q\trinr_{-1}, \:\: \trinl\boldsymbol{\zeta}\trinr_{2+\alpha}\leq C\trinl Q\trinr_{-1},
\end{equation}
where $\alpha$ is the index of elliptic regularity.
The discrete linearized problem is defined as: find $\Theta_h\in\cV_h$ such that
 \begin{equation}\label{vformsld}
      \cA_h(\Theta_h,\Phi)=(G,\Phi) \fl \Phi \in \mathcal{V}_h
   \end{equation}
where 
\begin{equation}\label{defnAhs}
\cA_h(\Theta_h,\Phi):=A_h(\Theta_h,\Phi)+B_h(\Psi,\Theta_h,\Phi)+B_h(\Theta_h,\Psi,\Phi)+\fC_h(\Theta_h,\Phi).
\end{equation} 

With this background, Theorem~\ref{wellposeld}, Lemma~\ref{nonsingular} and Theorems~\ref{mapball2ball}-\ref{NewtonThm} can be modified for the new formulation, leading to the applicability of the analysis to a more  general form of the von K\'{a}rm\'{a}n equations. We will sketch the proofs of the important results. 

\begin{thm}(Well posedness of discrete linearized problem)\label{wellposesld}
If $\Psi$ is an isolated solution of \eqref{vforms}, then for sufficiently small $h$, the discrete linearized problem \eqref{vformsld} is well-posed.
\end{thm}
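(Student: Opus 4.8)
The plan is to adapt the proof of Theorem~\ref{wellposeld} almost verbatim, exploiting the fact that the new bilinear form $\fC_h(\cdot,\cdot)$ is a genuinely lower-order perturbation: by \eqref{Cbnconf} it is controlled by broken $H^1$ norms. As in the unperturbed case, since $\cV_h$ is finite dimensional it suffices to establish the a priori estimate $\trinl\Theta_h\trinr_{2,h}\le C\trinl G\trinr$ for the solution of \eqref{vformsld}; uniqueness, and hence existence, then follows.

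First I would upgrade the G\aa{}rding-type inequality \eqref{garding} by adding the contribution of $\fC_h$. Using Lemma~\ref{boundBh2}, Remark~\ref{flipbound} and $|\fC_h(\Phi,\Phi)|\le C\trinl\Phi\trinr_{1,h}^2$, for $\Phi\in\cV_h$ one gets
\begin{equation*}
\cA_h(\Phi,\Phi)\ge \trinl\Phi\trinr_{2,h}^2-C\trinl\Psi\trinr_{2+\alpha}\trinl\Phi\trinr_{2,h}\trinl\Phi\trinr_{1,h}-Ch\trinl\Psi\trinr_{2+\alpha}\trinl\Phi\trinr_{2,h}^2-C\trinl\Phi\trinr_{1,h}^2,
\end{equation*}
with $\cA_h$ as in \eqref{defnAhs}. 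Choosing $\Phi=\Theta_h$ in \eqref{vformsld} and inserting this bound reproduces the analogue of \eqref{Thetabdd}, namely $\trinl\Theta_h\trinr_{2,h}\le C\big(h\trinl\Psi\trinr_{2+\alpha}\trinl\Theta_h\trinr_{2,h}+(1+\trinl\Psi\trinr_{2+\alpha})\trinl\Theta_h\trinr_{1,h}+\trinl G\trinr\big)$; the one new summand (an extra $\trinl\Theta_h\trinr_{1,h}$) is harmless, since the broken $H^1$ norm is controlled in the next step exactly as before via $\trinl\Theta_h\trinr_{1,h}\le Ch\trinl\Theta_h\trinr_{2,h}+\trinl E_h\Theta_h\trinr_1$ (see \eqref{Ehuse}).

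The core of the argument is then the duality estimate for $\trinl E_h\Theta_h\trinr_1$. I would choose $Q=-\Delta E_h\Theta_h$ and $\Phi=E_h\Theta_h$ in the dual problem \eqref{vformsldual} and expand $\cA(E_h\Theta_h,\boldsymbol{\zeta})$ along the lines of \eqref{Ehwellbd}. The only term not already treated for Theorem~\ref{wellposeld} is $\fC(E_h\Theta_h,\boldsymbol{\zeta})$; since $E_h\Theta_h\in\cV$, this equals $\fC_h(E_h\Theta_h,\boldsymbol{\zeta})$, and the same regrouping that produces \eqref{Ehwellbd} — together with \eqref{vformsld} applied to $\Pi_h\boldsymbol{\zeta}$, which absorbs $\fC_h(\Theta_h,\Pi_h\boldsymbol{\zeta})$ into the load contribution — reduces it to the two residuals $\fC_h(E_h\Theta_h-\Theta_h,\boldsymbol{\zeta})$ and $\fC_h(\Theta_h,\boldsymbol{\zeta}-\Pi_h\boldsymbol{\zeta})$. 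By \eqref{Cbnconf}, Lemma~\ref{enrich1} (which gives $\trinl E_h\Theta_h-\Theta_h\trinr_{1,h}\le Ch\trinl\Theta_h\trinr_{2,h}$), Lemma~\ref{interpolant} (which gives $\trinl\boldsymbol{\zeta}-\Pi_h\boldsymbol{\zeta}\trinr_{1,h}\le Ch^{1+\alpha}\trinl\boldsymbol{\zeta}\trinr_{2+\alpha}$), the a priori bounds \eqref{apriorislin23} and $\trinl-\Delta E_h\Theta_h\trinr_{-1}\le\trinl E_h\Theta_h\trinr_1$, both residuals are bounded by $Ch^\alpha\trinl\Theta_h\trinr_{2,h}\trinl E_h\Theta_h\trinr_1$. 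Combined with the estimates of all the other terms already carried out for Theorem~\ref{wellposeld}, this yields the analogue of \eqref{Ehestimate}, namely $\trinl E_h\Theta_h\trinr_1\le C(h^\alpha\trinl\Theta_h\trinr_{2,h}+\trinl G\trinr)$, which fed back into the previous display gives $\trinl\Theta_h\trinr_{2,h}\le C_*h^\alpha\trinl\Theta_h\trinr_{2,h}+C\trinl G\trinr$; absorbing the first term for sufficiently small $h$ (using $\alpha>\half$) completes the argument.

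The main obstacle here is essentially bookkeeping rather than a new idea: one must check that every contribution coming from $\fC$ or $\fC_h$ is truly lower order, i.e.\ carries a factor $h^\alpha$ (or $h$) relative to the broken energy norm, so that it is absorbable in the final step. This works precisely because $\fC_h$ involves only first derivatives, so the enrichment and interpolation estimates applied at the $H^1$ level already furnish the needed power of $h$, and because $\fC(\cdot,\cdot)$ and $\fC_h(\cdot,\cdot)$ coincide on $\cV\times\cV$, which lets one move freely between the conforming dual problem \eqref{vformsldual} and the nonconforming discrete equation \eqref{vformsld}. The well posedness of the dual problem and the regularity bounds \eqref{apriorislin23} are taken as given (they hold whenever $\Psi$ is isolated), so no additional work is needed there.
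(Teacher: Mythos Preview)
Your proposal is correct and follows essentially the same route as the paper: adapt the proof of Theorem~\ref{wellposeld}, use \eqref{Cbnconf} to arrive at the analogue of \eqref{Thetabdd}, and then in the duality estimate for $\trinl E_h\Theta_h\trinr_1$ isolate the two extra residuals $\fC_h(E_h\Theta_h-\Theta_h,\boldsymbol{\zeta})$ and $\fC_h(\Theta_h,\boldsymbol{\zeta}-\Pi_h\boldsymbol{\zeta})$, bounding them via \eqref{Cbnconf} together with Lemmas~\ref{interpolant} and~\ref{enrich1}. Your treatment of the G\aa{}rding inequality and the explicit tracking of the $\trinl\Theta_h\trinr_{2,h}$ factor in the $\fC_h$ residuals is slightly more detailed than the paper's sketch, but the argument is the same.
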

\noindent{\it Outline of the proof.} Following the proof of Theorem~\ref{wellposeld}, we easily arrive at \eqref{Thetabdd} using \eqref{Cbnconf}. To estimate $\trinl E_h\Theta_h\trinr_1$ in this case, choose $Q=-\Delta E_h\Theta_h$ and $\Phi=E_h\Theta_h$ in \eqref{vformsldual} and use \eqref{vformsld} to obtain
\begin{align*}
\trinl E_h\Theta_h\trinr_{1}^2
&=A_h(E_h\Theta_h-\Theta_h,\boldsymbol{\zeta})+A_h(\Theta_h,\boldsymbol{\zeta}-\Pi_h\boldsymbol{\zeta})+B_h(\Psi,E_h\Theta_h,\boldsymbol{\zeta}-\Pi_h\boldsymbol{\zeta})+B_h(E_h\Theta_h,\Psi,\boldsymbol{\zeta}-\Pi_h\boldsymbol{\zeta})\\
&\quad+B_h(\Psi,E_h\Theta_h-\Theta_h,\Pi_h\boldsymbol{\zeta})+B_h(E_h\Theta_h-\Theta_h,\Psi,\Pi_h\boldsymbol{\zeta})+(G,\Pi_h\boldsymbol{\zeta})\\
&\quad+\left(\fC_h(E_h\Theta-\Theta_h,\boldsymbol{\zeta})+\fC_h(\Theta_h,\boldsymbol{\zeta}-\Pi_h\boldsymbol{\zeta})\right).
\end{align*}
The last term can be estimated using \eqref{Cbnconf}, Lemmas~\ref{interpolant} and \ref{enrich1} as
\begin{equation}
|\fC_h(E_h\Theta-\Theta_h,\boldsymbol{\zeta})+\fC_h(\Theta_h,\boldsymbol{\zeta}-\Pi_h\boldsymbol{\zeta})|\leq Ch\trinl\boldsymbol{\zeta}\trinr_2.
\end{equation}
The remaining terms are estimated as in Theorem~\ref{wellposeld} and result follows.\qed

\medskip
The next lemma follows as in Lemma~\ref{nonsingular} using \eqref{Cbnconf} and hence the proof is skipped.

\begin{lem}{(Nonsingularity of perturbed bilinear form)}\label{nonsingulars} Let
$\Pi_h\Psi$ be the interpolation of $\Psi$ as defined in Lemma~\ref{interpolant}.
Then, for sufficiently small $h$, the perturbed bilinear form defined by  
\begin{equation}
\tilde \cA_h(\Theta,\Phi)= A_h(\Theta,\Phi)+B_h(\Pi_h\Psi,\Theta,\Phi)+B_h(\Theta,\Pi_h\Psi,\Phi)+\fC(\Theta,\Phi)
\end{equation}
is nonsingular on $\cV_h\times\cV_h$, if \eqref{defnAhs} is nonsingular on $\cV_h\times\cV_h$. 
\end{lem}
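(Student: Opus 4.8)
The plan is to mimic, almost verbatim, the perturbation argument used in the proof of Lemma~\ref{nonsingular}, the one new feature being the bilinear form $\fC$. Since $\cV_h\not\subset\cV$, the term written $\fC(\Theta,\Phi)$ in $\tilde{\cA}_h$ is to be read on $\cV_h\times\cV_h$ as its broken analogue $\fC_h(\Theta,\Phi)$, which is precisely the term occurring in the form \eqref{defnAhs}. The crucial observation is therefore that this contribution is common to both forms and cancels when they are subtracted, so it never enters the estimate. First I would record that, as in the proof of Lemma~\ref{nonsingular}, nonsingularity of the (bounded) form \eqref{defnAhs} on the finite-dimensional space $\cV_h\times\cV_h$ is equivalent to the two discrete inf-sup conditions
\[
\sup_{\trinl\Theta\trinr_{2,h}=1}\cA_h(\Theta,\Phi)\geq\beta\trinl\Phi\trinr_{2,h},\qquad
\sup_{\trinl\Phi\trinr_{2,h}=1}\cA_h(\Theta,\Phi)\geq\beta\trinl\Theta\trinr_{2,h}
\]
for some constant $\beta>0$, boundedness of $\cA_h$ following from continuity of $A_h$, the bound \eqref{boundBh1} and \eqref{Cbnconf}.

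Next, writing $\tilde{\Psi}:=\Psi-\Pi_h\Psi\in\cV+\cV_h$ and using bilinearity of $B_h$ in its first argument, I would rewrite, for all $\Theta,\Phi\in\cV_h$,
\[
\tilde{\cA}_h(\Theta,\Phi)=\cA_h(\Theta,\Phi)-B_h(\tilde{\Psi},\Theta,\Phi)-B_h(\Theta,\tilde{\Psi},\Phi),
\]
the $\fC_h$ terms having dropped out. The continuity bound \eqref{boundBh1} then gives $|B_h(\tilde{\Psi},\Theta,\Phi)+B_h(\Theta,\tilde{\Psi},\Phi)|\leq 2C_b\trinl\tilde{\Psi}\trinr_{2,h}\trinl\Theta\trinr_{2,h}\trinl\Phi\trinr_{2,h}$, and, since $\Psi$ being an isolated solution of \eqref{vforms} belongs to $\htosq\cap(H^{2+\alpha}(\Omega))^2$, Lemma~\ref{interpolant} yields $\trinl\tilde{\Psi}\trinr_{2,h}=\trinl\Psi-\Pi_h\Psi\trinr_{2,h}\leq Ch^\alpha\trinl\Psi\trinr_{2+\alpha}$. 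Hence there is $h_2>0$ such that $\trinl\tilde{\Psi}\trinr_{2,h}\leq\beta/(4C_b)$ for all $h\leq h_2$.

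It then follows, exactly as in Lemma~\ref{nonsingular}, that for $h\leq h_2$ and any $\Theta\in\cV_h$, choosing $\Phi\in\cV_h$ with $\trinl\Phi\trinr_{2,h}=1$ that nearly realizes the first inf-sup for $\cA_h$ gives
\[
\tilde{\cA}_h(\Theta,\Phi)\geq\beta\trinl\Theta\trinr_{2,h}-2C_b\trinl\tilde{\Psi}\trinr_{2,h}\trinl\Theta\trinr_{2,h}\geq\tfrac{\beta}{2}\trinl\Theta\trinr_{2,h},
\]
and symmetrically $\sup_{\trinl\Theta\trinr_{2,h}=1}\tilde{\cA}_h(\Theta,\Phi)\geq\tfrac{\beta}{2}\trinl\Phi\trinr_{2,h}$ for all $\Phi\in\cV_h$; thus $\tilde{\cA}_h$ satisfies both discrete inf-sup conditions on $\cV_h\times\cV_h$ and is nonsingular. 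I do not expect any genuine obstacle here: the argument is a routine perturbation estimate, and the only point deserving attention is the bookkeeping observation that the new bilinear form is identical in $\cA_h$ and $\tilde{\cA}_h$, so that the smallness of $\Psi-\Pi_h\Psi$ furnished by Lemma~\ref{interpolant} alone controls the perturbation.
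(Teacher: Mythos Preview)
Your proposal is correct and follows essentially the same route as the paper, which merely remarks that the proof ``follows as in Lemma~\ref{nonsingular} using \eqref{Cbnconf}'' and skips the details. Your key bookkeeping observation---that the $\fC_h$ contribution is common to $\cA_h$ and $\tilde{\cA}_h$ and therefore cancels in the perturbation, with \eqref{Cbnconf} needed only to assert boundedness of $\cA_h$---is exactly the point.
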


\begin{thm}(Mapping of ball to ball) \label{mapball2balls}
For a sufficiently small choice of $h$, there exists a positive constant $R(h)$ such that for any $\Theta \in \cV_h$,
\begin{equation*}
\trinl \Theta-\Pi_h\Psi\trinr_{2,h}\leq R(h)\Rightarrow \trinl\mu(\Theta)-\Pi_h\Psi\trinr_{2,h}\leq R(h).
\end{equation*} 
That is, $\mu$ maps the ball $\bB_{R(h)}(\Pi_h\Psi)$ to itself.
\end{thm}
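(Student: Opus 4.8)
The plan is to follow the proof of Theorem~\ref{mapball2ball} line by line, the only genuinely new ingredient being the control of the extra consistency term produced by the bilinear form $\fC_h(\cdot,\cdot)$. Here the nonlinear map $\mu:\cV_h\to\cV_h$ is defined in analogy with \eqref{defnmu}, by
\begin{equation*}
\tilde\cA_h(\mu(\Theta),\Phi)=L_h(\Phi)+B_h(\Pi_h\Psi,\Theta,\Phi)+B_h(\Theta,\Pi_h\Psi,\Phi)-B_h(\Theta,\Theta,\Phi)\fl\Phi\in\cV_h,
\end{equation*}
where now $\tilde\cA_h(\cdot,\cdot)$ is the perturbed form of Lemma~\ref{nonsingulars}; a short computation shows that a fixed point of $\mu$ solves \eqref{vformsd}. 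Since $\tilde\cA_h$ is nonsingular for small $h$ by Lemma~\ref{nonsingulars}, there is $\bphi\in\cV_h$ with $\trinl\bphi\trinr_{2,h}=1$ and $\tfrac{\beta}{4}\trinl\mu(\Theta)-\Pi_h\Psi\trinr_{2,h}\leq\tilde\cA_h(\mu(\Theta)-\Pi_h\Psi,\bphi)$.

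Next I would write $\tilde\cA_h(\mu(\Theta)-\Pi_h\Psi,\bphi)=\tilde\cA_h(\mu(\Theta),\bphi)-\tilde\cA_h(\Pi_h\Psi,\bphi)$, expand the first term via the definition of $\mu$, split $L_h(\bphi)=L_h(\bphi-E_h\bphi)+L_h(E_h\bphi)$ with $E_h\bphi\in\cV$ the enrichment of Lemma~\ref{enrich1}, and test \eqref{vforms} against $E_h\bphi$ --- using that $A$, $B$, $\fC$, $L$ agree with their broken counterparts on conforming functions --- to replace $L_h(E_h\bphi)$ by $A_h(\Psi,E_h\bphi)+B_h(\Psi,\Psi,E_h\bphi)+\fC_h(\Psi,E_h\bphi)$. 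Grouping the terms exactly as in the proof of Theorem~\ref{mapball2ball} reproduces the four quantities $T_1,\dots,T_4$ treated there, whose estimates are unchanged and contribute at most $C\bigl(h^\alpha+\trinl\Theta-\Pi_h\Psi\trinr_{2,h}^2\bigr)$, together with one additional term
\begin{equation*}
T_5:=\fC_h(\Psi,E_h\bphi)-\fC_h(\Pi_h\Psi,\bphi)=\fC_h(\Psi,E_h\bphi-\bphi)+\fC_h(\Psi-\Pi_h\Psi,\bphi).
\end{equation*}

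To estimate $T_5$ I would use the continuity \eqref{Cbnconf} together with Lemmas~\ref{enrich1} and \ref{interpolant}: the first summand is at most $C\trinl\Psi\trinr_1\trinl E_h\bphi-\bphi\trinr_{1,h}\leq Ch\trinl\Psi\trinr_1\trinl\bphi\trinr_{2,h}$ and the second at most $C\trinl\Psi-\Pi_h\Psi\trinr_{1,h}\trinl\bphi\trinr_{1,h}\leq Ch^{1+\alpha}\trinl\Psi\trinr_{2+\alpha}$, so $|T_5|\leq Ch\leq Ch^\alpha$ because $\alpha\in(\tfrac{1}{2},1]$ and $h\leq1$. Collecting all estimates gives $\trinl\mu(\Theta)-\Pi_h\Psi\trinr_{2,h}\leq C_1\bigl(h^\alpha+\trinl\Theta-\Pi_h\Psi\trinr_{2,h}^2\bigr)$ with $C_1$ independent of $h$ (but depending on $\trinl\Psi\trinr_{2+\alpha}$), which is precisely \eqref{muestimate}; the conclusion then follows verbatim from the end of the proof of Theorem~\ref{mapball2ball} by choosing $h\leq h_3$ so that $4C_1^2h^\alpha\leq1$ and setting $R(h):=2C_1h^\alpha$. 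I do not expect a real obstacle here: the only point requiring care is the bookkeeping for the new term $T_5$, together with the observation that its natural size $O(h)$ is absorbed into the $O(h^\alpha)$ contributions already present because $\alpha\leq1$.
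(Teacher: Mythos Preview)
Your proposal is correct and follows essentially the same approach as the paper's own proof: both expand $\tilde\cA_h(\mu(\Theta)-\Pi_h\Psi,\bphi)$ into the four terms $T_1,\dots,T_4$ of Theorem~\ref{mapball2ball} plus the single new consistency term $T_5=\fC_h(\Psi,E_h\bphi)-\fC_h(\Pi_h\Psi,\bphi)$, split $T_5$ identically, and bound it by $Ch$ via \eqref{Cbnconf} together with Lemmas~\ref{interpolant} and \ref{enrich1}. Your remark that $Ch\leq Ch^\alpha$ (since $\alpha\leq1$) is exactly what allows the new term to be absorbed into \eqref{muestimate}, after which the conclusion is verbatim that of Theorem~\ref{mapball2ball}.
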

\noindent{\it Outline of the proof.} Proceeding as in the proof of Theorem~\ref{mapball2ball}, using nonsingularity of $\tilde{\cA}_h(\cdot,\cdot)$ and Lemma~\ref{nonsingulars}, there exists $\bar\Phi\in\cV_h$ such that $\trinl\bar\Phi\trinr_{2,h}=1$ and 
\begin{align}
&\frac{\beta}{4}\trinl\mu(\Theta)-\Pi_h\Psi\trinr_{2,h}\leq \tilde\cA_h(\mu(\Theta)-\Pi_h\Psi, \bphi)\notag\\
&=L_h(\bphi-E_h\bphi) +\left(A_h(\Psi, E_h\bphi)-A_h(\Pi_h\Psi,\bphi)\right)+\left(B_h(\Psi,\Psi, E_h\bphi)-B_h(\Pi_h\Psi,\Pi_h\Psi,\bphi)\right)\notag\\
&\quad+B_h(\Pi_h\Psi-\Theta,\Theta-\Pi_h\Psi,\bphi)\notag+\left(\fC_h(\Psi,E_h\bar\Phi)-\fC_h(\Pi_h\Psi,\bar\Phi)\right)=:\sum_{i=1}^{5} T_i.
\end{align}
The terms $T_1$ to $T_4$ can be estimated as in the proof of Theorem~\ref{mapball2ball}. The last term $T_5$ is estimated using \eqref{Cbnconf}, Lemmas~\ref{interpolant} and \ref{enrich1} as:
\begin{equation}
|\fC_h(\Psi,E_h\bar\Phi)-\fC_h(\Pi_h\Psi,\bar\Phi)|\leq |\fC_h(\Psi,E_h\bar\Phi-\bar\Phi)|+|\fC_h(\Psi-\Pi_h\Psi,\bar\Phi)|\leq Ch\trinl\Psi\trinr_2.
\end{equation}
The remaining proof follows exactly same as the proof of Theorem~\ref{mapball2ball}.\qed\\

\medskip
The existence of solution $\Psi_h$ of \eqref{vformsd} follows using Theorem~\ref{mapball2balls} and satisfies the estimate
\begin{equation}\label{solnee}
\trinl\Psi_h-\Pi_h\Psi\trinr_{2,h}\leq Ch^\alpha.
\end{equation}
A contraction result similar to Theorem~\ref{contractionthm} also holds true in this case. The energy estimate follows exactly as in the proof of Theorem~\ref{eetimate}. 

\begin{thm}{($H^1$ estimate)}\label{h1estimate_s}
Let $\Psi$ and $\Psi_h$ be the solutions of \eqref{vforms} and \eqref{vformsd} respectively. Assume that $\Psi$ is an isolated solution. Then, for sufficiently small $h$, it holds
\begin{equation}
\trinl\Psi-\Psi_h\trinr_{1,h}\leq C h^{2\alpha},
\end{equation}
where $\alpha\in (\half,1]$ is the index of elliptic regularity.
\end{thm}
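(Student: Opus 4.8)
\noindent\emph{Outline of the proof.} The plan is to mimic, step by step, the proof of Theorem~\ref{h1estimate}, carrying the extra bilinear form $\fC_h(\cdot,\cdot)$ along and verifying that its contributions are of the required order. Writing $\boldsymbol{\rho}:=\Pi_h\Psi-\Psi_h$ and using the triangle inequality,
\begin{equation*}
\trinl\Psi-\Psi_h\trinr_{1,h}\leq \trinl\Psi-\Pi_h\Psi\trinr_{1,h}+\trinl\boldsymbol{\rho}-E_h\boldsymbol{\rho}\trinr_{1,h}+\trinl E_h\boldsymbol{\rho}\trinr_{1}.
\end{equation*}
Lemma~\ref{interpolant} gives $\trinl\Psi-\Pi_h\Psi\trinr_{1,h}\leq Ch^{1+\alpha}\trinl\Psi\trinr_{2+\alpha}$, while Lemma~\ref{enrich1} together with the energy estimate \eqref{solnee} gives $\trinl\boldsymbol{\rho}-E_h\boldsymbol{\rho}\trinr_{1,h}\leq Ch\trinl\boldsymbol{\rho}\trinr_{2,h}\leq Ch^{1+\alpha}$; since $1+\alpha\geq 2\alpha$ for $\alpha\in(\half,1]$, it remains to establish $\trinl E_h\boldsymbol{\rho}\trinr_{1}\leq Ch^{2\alpha}$.

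To this end I would take $Q=-\Delta E_h\boldsymbol{\rho}$ and $\Phi=E_h\boldsymbol{\rho}\in\cV$ in the dual problem \eqref{vformsldual}, obtaining $\trinl E_h\boldsymbol{\rho}\trinr_{1}^2=\cA(E_h\boldsymbol{\rho},\boldsymbol{\zeta})=\cA_h(E_h\boldsymbol{\rho},\boldsymbol{\zeta})$, the last equality because $E_h\boldsymbol{\rho}$ is conforming so the broken forms agree with their global versions. Splitting $\cA_h(E_h\boldsymbol{\rho},\boldsymbol{\zeta})=\cA_h(E_h\boldsymbol{\rho}-\boldsymbol{\rho},\boldsymbol{\zeta})+\cA_h(\boldsymbol{\rho},\boldsymbol{\zeta})$ and inserting the equations \eqref{vforms} and \eqref{vformsd} exactly as in the proof of Theorem~\ref{h1estimate} reproduces the eight terms $T_1,\dots,T_8$ of that proof, which are bounded there by $Ch^{2\alpha}(\trinl\boldsymbol{\zeta}\trinr_{2}+\trinl\boldsymbol{\zeta}\trinr_{2+\alpha})$ using Lemmas~\ref{interpolant}, \ref{enrich1}, \ref{enrichreg}, \ref{boundBh2}, Remarks~\ref{boundBh3}, \ref{flipbound}, \eqref{boundBh1}, the energy estimate and \eqref{solnee}; none of these arguments involve $\fC_h$, so they apply verbatim. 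What is new is that the $\fC$-terms produced when the continuous equation is tested against $\boldsymbol{\zeta}$ and the discrete equation against $\Pi_h\boldsymbol{\zeta}$ no longer cancel. Collecting all the $\fC_h$-contributions — the one from $\cA_h(E_h\boldsymbol{\rho}-\boldsymbol{\rho},\boldsymbol{\zeta})$, the linearized piece $\fC_h(\boldsymbol{\rho},\boldsymbol{\zeta})$, and the residual $-\fC_h(\Psi,\boldsymbol{\zeta})+\fC_h(\Psi_h,\Pi_h\boldsymbol{\zeta})$ — and simplifying (using $\fC_h(\Psi,\cdot)=\fC(\Psi,\cdot)$ since $\Psi\in\ho$), this additional block reduces to
\begin{equation*}
T_9=\fC_h(E_h\boldsymbol{\rho}-\boldsymbol{\rho},\boldsymbol{\zeta})+\fC_h(\Pi_h\Psi-\Psi,\boldsymbol{\zeta})-\fC_h(\Psi_h,\boldsymbol{\zeta}-\Pi_h\boldsymbol{\zeta}).
\end{equation*}

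Each summand of $T_9$ is then controlled by the continuity \eqref{Cbnconf} of $\fC_h(\cdot,\cdot)$ and an $h$-power coming from the fact that $\fC_h$ involves only first derivatives: $\trinl E_h\boldsymbol{\rho}-\boldsymbol{\rho}\trinr_{1,h}\leq Ch^{1+\alpha}$ by Lemma~\ref{enrich1} and \eqref{solnee}; $\trinl\Pi_h\Psi-\Psi\trinr_{1,h}\leq Ch^{1+\alpha}\trinl\Psi\trinr_{2+\alpha}$ and $\trinl\boldsymbol{\zeta}-\Pi_h\boldsymbol{\zeta}\trinr_{1,h}\leq Ch^{1+\alpha}\trinl\boldsymbol{\zeta}\trinr_{2+\alpha}$ by Lemma~\ref{interpolant}; and $\trinl\Psi_h\trinr_{1,h}\leq\trinl\Psi_h\trinr_{2,h}$ is bounded by \eqref{solnee} and Lemma~\ref{interpolant}. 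Hence $|T_9|\leq Ch^{1+\alpha}(\trinl\boldsymbol{\zeta}\trinr_{2}+\trinl\boldsymbol{\zeta}\trinr_{2+\alpha})\leq Ch^{2\alpha}(\trinl\boldsymbol{\zeta}\trinr_{2}+\trinl\boldsymbol{\zeta}\trinr_{2+\alpha})$. Adding this to the bound for $T_1+\dots+T_8$, invoking the $a~priori$ estimates \eqref{apriorislin23} for the dual problem and $\trinl Q\trinr_{-1}=\trinl-\Delta E_h\boldsymbol{\rho}\trinr_{-1}\leq C\trinl E_h\boldsymbol{\rho}\trinr_{1}$, one gets $\trinl E_h\boldsymbol{\rho}\trinr_{1}^2\leq Ch^{2\alpha}\trinl E_h\boldsymbol{\rho}\trinr_{1}$, i.e. $\trinl E_h\boldsymbol{\rho}\trinr_{1}\leq Ch^{2\alpha}$, and the triangle inequality above finishes the argument.

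The routine part of all this is the estimation of $T_1,\dots,T_8$, which is identical to Theorem~\ref{h1estimate} and can simply be quoted. The only point requiring genuine care — and the place where I expect the main (though modest) difficulty — is the algebraic bookkeeping that isolates $T_9$: one must track that, unlike the $A$- and $L$-contributions, the $\fC$-terms produced when substituting the continuous and discrete equations do not cancel, and then notice that the leftover is still $O(h^{1+\alpha})$ precisely because $\fC_h$ is a first-order form, so that each of the factors $\trinl E_h\boldsymbol{\rho}-\boldsymbol{\rho}\trinr_{1,h}$, $\trinl\Pi_h\Psi-\Psi\trinr_{1,h}$ and $\trinl\boldsymbol{\zeta}-\Pi_h\boldsymbol{\zeta}\trinr_{1,h}$ supplies a full extra power of $h$ on top of the $h^{\alpha}$ already available. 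I do not anticipate any obstruction beyond this.
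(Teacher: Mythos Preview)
Your proposal is correct and follows essentially the same route as the paper: the same triangle-inequality split, the same duality argument with $Q=-\Delta E_h\boldsymbol{\rho}$, the same reuse of the $T_1,\dots,T_8$ bounds from Theorem~\ref{h1estimate}, and the same identification of the extra $\fC_h$-block, which the paper writes as $\fC_h(E_h\boldsymbol{\rho}-\boldsymbol{\rho},\boldsymbol{\zeta})+\fC_h(\Pi_h\Psi-\Psi,\boldsymbol{\zeta})+\fC_h(\Psi_h,\Pi_h\boldsymbol{\zeta}-\boldsymbol{\zeta})$ (your $T_9$ up to a harmless sign convention) and bounds by $Ch^{1+\alpha}\trinl\boldsymbol{\zeta}\trinr_{2+\alpha}$ using \eqref{Cbnconf}, Lemmas~\ref{interpolant}, \ref{enrich1} and \eqref{solnee}.
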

\noindent{\it Outline of the proof.} A use of triangle inequality yields
\begin{equation}\label{trienqs}
\trinl\Psi-\Psi_h\trinr_{1,h}\leq \trinl\Psi-\Pi_h\Psi\trinr_{1,h}+\trinl\Pi_h\Psi-\Psi_h\trinr_{1,h}\leq \trinl\Psi-\Pi_h\Psi \trinr_{1,h}+\trinl\boldsymbol{\rho}-E_h\boldsymbol{\rho}\trinr_{1,h}+\trinl E_h\boldsymbol{\rho}\trinr_{1},
\end{equation}
 where $\boldsymbol{\rho}=\Pi_h\Psi-\Psi_h$. A choice of $Q=-\Delta E_h\boldsymbol{\rho}$ and $\Phi=E_h\boldsymbol{\rho}$ in the dual problem~\eqref{vformsldual} leads to
\begin{align}
\left(\nabla E_h\boldsymbol{\rho},\nabla E_h\boldsymbol{\rho}\right) &= \cA_h({E_h\boldsymbol{\rho},\boldsymbol \zeta})=\cA_h(E_h\boldsymbol{\rho}-\boldsymbol{\rho},{\boldsymbol \zeta})+\cA_h(\boldsymbol{\rho},{\boldsymbol \zeta})\nonumber\\
&= A_h(E_h\boldsymbol{\rho}-\boldsymbol{\rho},{\boldsymbol \zeta})+B_h(\Psi,E_h\boldsymbol{\rho}-\boldsymbol{\rho},{\boldsymbol \zeta})+B_h(E_h\boldsymbol{\rho}-\boldsymbol{\rho},\Psi,{\boldsymbol \zeta})\nonumber+\fC_h(E_h\boldsymbol{\rho}-\boldsymbol{\rho},{\boldsymbol \zeta})\\
&\quad+A_h(\Pi_h\Psi-\Psi,{\boldsymbol\zeta})+A_h(\Psi-\Psi_h,\boldsymbol{\zeta}-\Pi_h\boldsymbol{\zeta})+A_h(\Psi,\Pi_h\boldsymbol{\zeta}-\boldsymbol{\zeta})+L_h(\boldsymbol{\zeta}-\Pi_h\boldsymbol{\zeta})\nonumber\\
&\quad
+\left(B_h(\Psi,\Pi_h\Psi-\Psi_h,{\boldsymbol \zeta})+B_h(\Pi_h\Psi-\Psi_h,\Psi,{\boldsymbol \zeta})-B_h(\Psi,\Psi,{\boldsymbol \zeta})+B_h(\Psi_h,\Psi_h,\Pi_h{\boldsymbol \zeta})\right)\nonumber\\
&\quad+
 \left(\fC_h(\Pi_h\Psi-\Psi,\boldsymbol{\zeta})+\fC_h(\Psi_h,\Pi_h\boldsymbol{\zeta}-\boldsymbol{\zeta})\right)
\end{align}
Combining all the terms related to $\fC_h$ and using \eqref{Cbnconf}, \eqref{solnee} and Lemmas~\ref{interpolant}, \ref{enrich1}, we obtain the estimate
\begin{equation}
\fC_h(E_h\boldsymbol{\rho}-\boldsymbol{\rho},{\boldsymbol \zeta})+\fC_h(\Pi_h\Psi-\Psi,\boldsymbol{\zeta})+\fC_h(\Psi_h,\Pi_h\boldsymbol{\zeta}-\boldsymbol{\zeta})\leq Ch^{1+\alpha}\trinl\boldsymbol{\zeta}\trinr_{2+\alpha}.
\end{equation}
Estimating the remaining terms as in the proof of Theorem~\ref{h1estimate}, the result follows.\qed\\

\medskip
The Newton's iterates in this case are defined by
\begin{equation}\label{NewtonIterate_s}
A_h(\Psi_h^{n},\Phi)+B_h(\Psi_h^{n-1},\Psi_h^{n},\Phi)+B_h(\Psi_h^{n},\Psi_h^{n-1},\Phi)+\fC_h(\Psi_h^n,\Phi)=B_h(\Psi_h^{n-1},\Psi_h^{n-1},\Phi)+L_h(\Phi)\fl \Phi\in \cV_h.
\end{equation}
The quadratic convergence result follows by a similar proof as in Theorem~\ref{NewtonThm}.\\

\medskip
\subsection{Example 3}\label{example3}
In this example, we perform numerical experiments for the problem \eqref{vkes}-\eqref{vkbs} with $p/D=10$, over a unit square domain. Choose the right hand side load functions such that the exact solution is given by
 \begin{align*}
     u(x,y)&=x^2(1-x)^2y^2(1-y)^2,\quad v(x,y)=\sin^2(\pi x)\sin^2(\pi y).
 \end{align*}
We consider the same initial triangulation and its uniform refinement process as in Example~\ref{example1}. Tables ~\ref{table:OC_NCFEM_uhs} and ~\ref{table:OC_NCFEM_vhs} show the errors and experimental convergence rates for the variables $u_h$ and $v_h$. The computational order of convergences in broken $H^2,\; H^1$ norms are quasi-optimal and verify the theoretical results. Also, the order of convergence with respect to $L^2$ norm is sub-optimal justifying the results in~\cite{HuShi}.

 \begin{table}
   \begin{center}
     \begin{tabular}{ | c| c | c |c |c |c |c |}     
      \hline
      \# unknowns & $|u-u_h|_{2,h}$ & Order  & $|u-u_h|_{1,h}$ & Order & $\|u-u_h\|_{L^2}$ & Order\\ 
       \hline \hline
      
    % 5     &  0.219367E-0  & -         & 0.427497E-1 &-         & 0.205490E-1  & - \\ 
      % \hline
     25    &  0.101724E-0  & -         & 0.129574E-1 & -        & 0.469669E-2  & -\\ 
       \hline
     113   &  0.391714E-1  & 1.3767    & 0.275863E-2 & 2.2317   & 0.957470E-3  & 2.2943\\ 
       \hline
     481   &  0.195023E-1  & 1.0061    & 0.767382E-3 & 1.8459   & 0.252196E-3  & 1.9246\\ 
       \hline
     1985  &  0.974844E-2  & 1.0004    & 0.198544E-3 & 1.9504   & 0.641987E-4  & 1.9739\\ 
       \hline
     8065  &  0.487399E-2  & 1.0000    & 0.500990E-4 & 1.9866   & 0.161298E-4  & 1.9928\\
       \hline 
     32513 &  0.243697E-2  & 1.0000    & 0.125546E-4 & 1.9965   & 0.403763E-5  & 1.9981\\
       \hline
     \end{tabular}
    \end{center}
     \caption{Errors and Convergence rates of $u_h$  in broken $H^2,H^1$ and $L^2$ norms}
     \label{table:OC_NCFEM_uhs}
 \end{table}
 
\begin{table}
    \begin{center}
       \begin{tabular}{ |c| c | c |c |c |c |c |}     
         \hline
         \# unknowns & $|v-v_h|_{2,h}$ & Order  & $|v-v_h|_{1,h}$ & Order & $\|v-v_h\|_{L^2}$ & Order\\ 
          \hline \hline
 
       % 5     & 14.281857 & -        & 1.602311E-0 &-         & 0.765870E-0 & -\\ 
        %  \hline
        25    & 19.245650 & -        & 2.140609E-0 &  -       & 0.770875E-0 &  - \\ 
        \hline
        113   & 9.5043692 & 1.0178   & 0.569978E-0 & 1.9090   & 0.177898E-0 & 2.1154 \\ 
          \hline
        481   & 5.0549208 & 0.9109   & 0.161737E-0 & 1.8172   & 0.482777E-1 & 1.8816 \\ 
          \hline
        1985  & 2.5758938 & 0.9726   & 0.421546E-1 & 1.9398   & 0.123930E-1 & 1.9618 \\ 
          \hline
        8065  & 1.2944929 & 0.9926   & 0.106618E-1 & 1.9832   & 0.312076E-2 & 1.9895\\
          \hline  
        32513 & 0.6480848 & 0.9981   & 0.267351E-2 & 1.9956   & 0.781643E-3 & 1.9973\\
          \hline
       \end{tabular}
    \end{center}
    \caption{Errors and Convergence rates of $v_h$  in broken $H^2,H^1$ and $L^2$ norms}\label{table:OC_NCFEM_vhs}
 \end{table}
    
\end{document}